 \newtheorem{theorem}{Theorem}[section]
\theoremstyle{definition}
\newtheorem{definition}[theorem]{Definition}
\newtheorem{lemma}[theorem]{Lemma}
\newtheorem{notation}[theorem]{Notation}
\newtheorem{example}[theorem]{Example}
\newtheorem{conjecture}[theorem]{Conjecture}
\newtheorem{corollary}[theorem]{Corollary}
 \newtheorem{remark}[theorem]{Remark}
  \newtheorem{question}{Question}
\newtheorem{proposition}[theorem]{Proposition}
\tikzset{vertex/.style={circle, draw, fill=black!50},inner sep=0pt, minimum width=4pt}
\theoremstyle{definition}
\thanks {The author is supported by NSF grant DMS-1812021.}
\begin{document}

\title{A symbolic coding of the Morse Boundary}
\author{Abdalrazzaq Zalloum}
\begin{abstract}
Let $X$ be a proper geodesic metric space. We give a new construction of the Morse Boundary that realizes its points as equivalence classes of functions on $X$ which behave similar to the ``distance to a point" function. When $G=\langle S \rangle $ is a finitely generated group and $X=Cay(G,S)$, we use this construction to give a symbolic presentation of the Morse boundary as a space of ``derivatives" on $Cay(G,S).$ The collection of such derivatives naturally embeds in the shift space $\mathcal{A}^G$ for some finite set $\mathcal{A}.$

\end{abstract}

\maketitle

\section{Introduction} 
The Gromov boundary has been extremely fruitful in the study of hyperbolic groups. For example, if a group $G$ acts geometrically (properly and cocompactly) on two hyperbolic spaces $X$ and $Y$, then $\partial X \cong \partial Y$, making it possible to define the boundary of a group $G$ admitting such an action. However, as soon as non-negative (even zero) curvature is introduced, such theorems become harder to get. For instance, Croke and Kleiner \cite{CrokeKleiner} gave an example of a group $G$ acting geometrically on two homeomorphic CAT(0) spaces $X$ and $Y$ and yet yielding non-homeomorphic boundaries. A natural attempt to circumvent this problem would be to ignore the non-negatively curved parts of the space, and focus only on those negatively curved directions. This approach in fact works; Charney and Sultan \cite{ChSu2014} introduced the contracting boundary of a CAT(0) space, which is defined to be the collection of all geodesic rays in the CAT(0) space satisfying some hyperbolic-like condition, and they were able to show that it is a quasi-isometry invariant. This construction was generalized later by Cordes \cite{Cordes2017} to the Morse boundary of any proper geodesic metric space and therefore any finitely generated group. Since the Morse boundary of a geodesic metric space is a quasi-isometry invariant, it makes it a promising tool to distinguish groups up to quasi-isometry. This paper is devoted to giving a new construction of the Morse boundary.

One of the fundamental observations in geometric group theory, due to Schwarz  and Milnor, is that whenever two groups $G_1$ and $G_2$ act geometrically on a geodesic metric space $X$, then $G_1$ and $G_2$ must be quasi isometric to each others. For a large class of groups $\mathcal{H}$, knowing that a group $G$ is quasi isometric to a group $H$ with $H \in \mathcal{H}$, implies that $G$ and $H$ are in fact virtually isomorphic.
Examples of such groups are free abelian groups, surface groups, finite volume discrete subgroups of a non-compact Lie groups and many others. This is in fact quite powerful; it basically tells us that almost all of the properties of such an infinite group, are encoded in the geometry of the space that's being acted on by the group.

 Let $X$ be a proper geodeisc metric space. A geodesic ray $\alpha: [0, \infty) \rightarrow X$ is said to be Morse if it satisfies the hyperbolic-like property that quasi geodesics with end points on $Im(\alpha)$ stay close to $Im(\alpha)$. As a set, the Morse boundary, denoted by $\partial_{\star}X$, is defined to be the collection of equivalence classes of such geodesic rays where $\alpha \sim \beta$ if there exists some $C \geq 0$ such that $d(\alpha(t), \gamma(t)) \leq C$ for all $t \in [0, \infty)$. Based on ideas of Gromov, we describe a different construction of the Morse boundary as a certain collection of equivalence classes of $1$-Lipschitz maps from $X$ to $\mathbb{R}$ satisfying a certain distance-like condition. More precisely, if $\overline{H_{\star}}$ is the collection of all equivalence classes of distance-like functions having some Morse gradient ray, then we get a well defined $G$-equivariant continuous surjection from $\overline{H_{\star}}$ onto $\partial_{\star}X$, the Morse boundary of $X$, recovering the Morse boundary as $\partial_{\star} X \cong \overline{H_{\star}}/ \sim$

\begin{theorem}

Let $X$ be a proper geodesic metric space. Then there exists a subset $H_{\star}$ of Lip$(X, \mathbb{R})$ along with a continuous surjection $\varphi:\overline{H_{\star}} \twoheadrightarrow \partial_{\star}X$. In particular, we have $\partial_{\star} X \cong \overline{H_{\star}}/ \sim$
\end{theorem}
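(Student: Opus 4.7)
The construction will emulate Gromov's horofunction/Busemann boundary, then restrict to functions whose gradient flow produces a Morse ray. Fix a basepoint $p_0 \in X$. Define a \emph{distance-like} function to be a $1$-Lipschitz $f\colon X \to \mathbb R$ normalized by $f(p_0)=0$ such that from every $x \in X$ there issues a unit-speed geodesic ray $\gamma_x$ along which $f$ decreases at unit rate, i.e.\ $f(\gamma_x(t)) = f(x) - t$; call any such $\gamma_x$ a \emph{gradient ray} of $f$ at $x$. Take $H_\star \subset \mathrm{Lip}(X,\mathbb R)$ to be the distance-like functions admitting at least one Morse gradient ray from $p_0$; this contains the normalized distance cocycles $f_p(x)=d(x,p)-d(p_0,p)$ whenever $p$ lies on a Morse geodesic emanating from $p_0$. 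Endow $\mathrm{Lip}(X,\mathbb R)$ with the topology of uniform convergence on compact sets; by Arzelà--Ascoli and properness of $X$, sequences of normalized $1$-Lipschitz functions subsequentially converge, so $\overline{H_\star}$ sits inside a sequentially compact ambient space.

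Next I would construct the surjection $\varphi\colon \overline{H_\star}\twoheadrightarrow \partial_\star X$. First, each $f\in \overline{H_\star}$ still admits a gradient ray from $p_0$: pick $f_n\to f$ with gradient rays $\gamma_n$, apply Arzelà--Ascoli to extract a subsequential limit $\gamma$, then pass to the limit in $f_n(\gamma_n(t))=-t$ to see that $\gamma$ is a gradient ray of $f$. To upgrade $\gamma$ to a Morse ray, I would use the distance-like identity: any quasi-geodesic $q$ with endpoints on $\gamma$ has its $f$-values controlled by its endpoints, and the $1$-Lipschitz condition then forces $q$ to hug the level sets of $f$ in a quantitative way, which pins $q$ to a neighborhood of $\gamma$ depending only on the quasi-geodesic constants. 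Setting $\varphi(f)=[\gamma_f]$, well-definedness follows because any two Morse gradient rays of a single $f$ both satisfy $f\circ\gamma(t)=-t$, so they fellow-travel by the Morse property combined with synchronization of their $f$-levels. For surjectivity, associate to a Morse ray $\alpha$ its Busemann function $b_\alpha(x)=\lim_{n\to\infty}\bigl(d(x,\alpha(n))-n\bigr)$, which is a uniform-on-compacta limit of the $f_{\alpha(n)}\in H_\star$, hence lies in $\overline{H_\star}$, and has $\alpha$ itself as a Morse gradient ray.

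Continuity of $\varphi$ is then a diagonal Arzelà--Ascoli argument: if $f_n\to f$ in $\overline{H_\star}$, then the gradient rays $\gamma_{f_n}$ subsequentially converge to a gradient ray of $f$, and since the topology on $\partial_\star X$ is (up to the direct-limit refinement by Morse gauge) induced by uniform-on-compacta convergence of representative rays, $\varphi(f_n)\to\varphi(f)$. The $G$-equivariance in the group case is immediate from the equivariance of the defining conditions. Declaring $f\sim g$ precisely when $\varphi(f)=\varphi(g)$ then factors $\varphi$ through a continuous bijection $\overline{H_\star}/\mathord{\sim}\to \partial_\star X$, which is the claimed identification.

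The main obstacle, as I see it, is the Morse promotion in the second step: showing that every $f\in\overline{H_\star}$ admits a \emph{Morse} gradient ray, not merely some gradient ray, since the Morse property is not automatically preserved under uniform limits without uniform control on the Morse gauge. The natural route is to stratify $H_\star$ by Morse gauge $N$, verify that each stratum $H_\star^{N}$ is closed under the limits which arise (so that $\overline{H_\star^N}$ retains the gauge $N$ up to a controlled inflation), and check that $\varphi$ behaves well on each stratum before reassembling $\overline{H_\star}=\bigcup_N \overline{H_\star^N}$. This parallels the standard direct-limit description $\partial_\star X=\varinjlim_N \partial_\star^N X$ and should carry the bulk of the technical content.
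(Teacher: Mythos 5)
There is a genuine gap at the well-definedness step. Your $H_\star$ consists of distance-like functions admitting a Morse gradient ray, and you assert that any two Morse gradient rays of the same $f$ fellow-travel ``by the Morse property combined with synchronization of their $f$-levels.'' This is false without an additional hypothesis. Take $X$ to be a tree (two rays wedged at $p_0$, or the Cayley graph of $F_2$) and $f=-d(p_0,\cdot)$. This $f$ is distance-like in the paper's sense, every geodesic ray issuing from $p_0$ is a gradient ray of $f$ and is Morse (everything in a tree is Morse), yet distinct such rays satisfy $d(\alpha(t),\beta(t))=2t$. Level-set synchronization plus thinness of the triangle with vertices $p_0,\alpha(t),\beta(t)$ only places the midpoint $m$ of $[\alpha(t),\beta(t)]$ within $\delta$ of some $\alpha(r)$; nothing forces $r$ to be near $t$ --- in the tree example $m=p_0$ and $r=0$. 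The paper closes this hole by building a coarse convexity condition into the very definition of $H_\star$ (the ``Morse convex'' condition $h(x_t)\le (1-t)h(x_0)+th(x_1)+K$ along geodesics joining points of Morse rays). It is exactly this constant $K$ that forces $h(m)\le -t+K$, hence $r\ge t-K-\delta$, in the fellow-travelling lemma; and it is a separate, nontrivial lemma (proved via thin Morse triangles and a tripod comparison) that Busemann functions of Morse rays actually satisfy Morse convexity --- a verification your surjectivity step also needs but omits.

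Two secondary points. First, your ``Morse promotion'' mechanism --- that the distance-like identity forces a quasi-geodesic with endpoints on a gradient ray to ``hug the level sets'' and hence stay near the ray --- does not work: level sets of a distance-like function can be enormous (for $b_\alpha$ on $\mathbb{Z}^2$, as in the paper's non-example, they are entire lines along which a quasi-geodesic may wander arbitrarily far). Your fallback of stratifying by Morse gauge $N$ and controlling limits gauge-by-gauge is the correct fix and is precisely what the paper does, proving the theorem first for each $\overline{H^N_p}\to\partial^N X_p$ and then passing to the direct limit over gauges. Second, in the paper the bar in $\overline{H_\star}$ denotes the quotient by constant functions, not the topological closure; reading it as a closure is what forces you into the Morse-promotion problem in the first place, so part of the difficulty you flag is an artifact of that interpretation.
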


Symbolic dynamics studies the action of a group $G$ on $\mathcal{A}^G,$ the set of maps from this group to a finite set $\mathcal{A}$, which is called the set of symbols.  Such an action is called a Bernoulli shift. In his famous paper \cite{Gromov1987}, Gromov shows that, when $G$ is a hyperbolic group, there is a finite set of symbols $\mathcal{A}$ , a subset $Y \subseteq \mathcal{A}^G$ along with a Bernoulli shift action of $G$ on $Y$ such that if $\partial G$ is the Gromov boundary of $G$, then there exists a $G$-equivariant continuous surjection $\varphi:Y \twoheadrightarrow \partial G$.  This is called a \emph{symbolic coding} or \emph{symbolic presentation} of $\partial G$. In other words, if a group $G$ acts on a topological space $X$, we say that this action admits a symbolic coding if there exists a finite set $\mathcal{A}$, some $G$-equivariant subset $Y$ of $\mathcal{A}^G$ along with a continuous surjection of $Y$ onto $X$ which is $G$-equivariant. 

It is known that an action of a discrete group on a Polish topologcial space, which is a certain kind of metrizable space, admits a symolic coding (Theorem 1.4 of \cite{Seward2014}). However, the Morse boundary is not metrizable or even second countable \cite{Murray2015}. We use the new construction of the Morse boundary in the previous theorem to show that

\begin{theorem}
Let $G$ be a finitely generated group. Then, the action of $G$ on its Morse boundary admits a symbolic coding. In other words, there exists a finite set of symbols $\mathcal{A},$ a subset $Y_{\star}$ of $\mathcal{A}^G$ along with a Bernoulli shift action of $G$ on $Y_{\star}$; such that if $\partial_{\star} G$ is the Morse boundary of $G$, then there exists a $G$-equivariant continuous surjection 
$\phi: Y_{\star} \rightarrow \partial_{\star} G$
\end{theorem}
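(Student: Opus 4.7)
The strategy is to apply Theorem 1.1 to $X = \mathrm{Cay}(G,S)$ and encode each distance-like function on the Cayley graph by its ``derivative'' --- the tuple of differences along the edges at each vertex. Let $\mathcal{A} = \{-1,0,1\}^S$; for any integer-valued $1$-Lipschitz function $h: G \to \mathbb{Z}$ define $Dh \in \mathcal{A}^G$ by $(Dh)(g)_s := h(gs) - h(g)$. Since $h$ is $1$-Lipschitz this really lies in $\mathcal{A}^G$, and once the value $h(e)$ is fixed $Dh$ recovers $h$ by summation along edge paths in $\mathrm{Cay}(G,S)$, so $D$ is a bijection from basepoint-normalized integer-valued $1$-Lipschitz functions onto its image.

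First I would reduce to integer-valued representatives. The distance-like functions associated with a Morse ray arise as pointwise limits of the integer-valued functions $d(\cdot,x_n) - d(e,x_n)$ for $x_n$ along the ray, and a convergent pointwise limit of integer-valued functions is itself integer-valued. Thus every class in $\overline{H_\star}/\sim$ has a representative $h \in H_\star$ that is integer valued on the vertex set $G$ and satisfies $h(e)=0$; call the collection of such representatives $H_\star^{\mathbb{Z}}$. Set $Y_\star := D(H_\star^{\mathbb{Z}}) \subseteq \mathcal{A}^G$ and let $\phi := \varphi \circ D^{-1} : Y_\star \twoheadrightarrow \partial_\star G$, where $\varphi$ is the surjection provided by Theorem 1.1.

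The three required properties of $\phi$ are then straightforward. Surjectivity is immediate from that of $\varphi$ together with the existence of integer-valued representatives. For $G$-equivariance, the natural action on normalized distance-like functions, $(g\cdot h)(x) = h(g^{-1}x) - h(g^{-1})$, gives $D(g\cdot h)(x)_s = h(g^{-1}xs) - h(g^{-1}x) = (Dh)(g^{-1}x)_s$, which is precisely the Bernoulli shift of $Dh$ by $g$. For continuity, the topology on $\overline{H_\star}$ is pointwise convergence (the natural topology on basepoint-normalized Lipschitz functions, by Arzel\`a--Ascoli), and if $Dh_n \to Dh_\infty$ pointwise on $G$ then telescoping along any fixed path from $e$ to $g$ yields $h_n(g) \to h_\infty(g)$, so $D^{-1}$ and hence $\phi$ is continuous.

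The main obstacle I anticipate is showing that $H_\star^{\mathbb{Z}}$ is genuinely rich enough: one must verify that a pointwise limit of integer-valued distance-like functions whose gradient rays are all $M$-Morse still admits a Morse gradient ray, with gauge controlled in terms of $M$ and the geometry of $\mathrm{Cay}(G,S)$. Because Cordes's Morse boundary is stratified by Morse gauges rather than being a single compact space, I expect $Y_\star$ to decompose as a countable union of pieces indexed by gauges, and the encoding must be organised compatibly with this stratification. The stability needed should follow from an Arzel\`a--Ascoli argument applied to the approximating geodesic rays, but one must argue that the Morse gradient ray itself -- and not merely the sequence of endpoints used to build the approximating horofunctions -- descends to the limit, which is the delicate technical point of the construction.
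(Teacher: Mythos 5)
Your proposal follows essentially the same route as the paper: restrict to the integer-valued distance-like functions (the paper's $(H_{\star})_0$), observe that the Busemann function $b_c$ of a Morse ray $c$ starting at $e$ takes integer values on vertices and already lies in $H_{\star}$ with $c$ as a Morse gradient ray, and transport the surjection $\varphi$ of Theorem 1.1 through the derivative map $g \mapsto (s \mapsto h(gs)-h(g))$ into $\mathcal{A}^G$ with $\mathcal{A}=\{-1,0,1\}^S$, checking equivariance against the Bernoulli shift exactly as you do. The ``delicate technical point'' you anticipate at the end does not actually arise: surjectivity only requires that each Morse boundary point $[c]$ be hit by \emph{some} integer-valued element of $H_{\star}$, and $b_c$ itself does this directly (its defining limit is the limit in the formula for a single Busemann function, not a limit of a sequence of distance-like functions, so no descent of Morse gradient rays to a limit is needed).
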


This construction opens the door to using symbolic dynamical methods to understand the Morse boundary. For example, when $G$ is a hyperbolic group, this construction has been studied further by Coornaert and Papadopoulos \cite{Coornaert2001} where they show that $Y \subseteq \mathcal{A}^G$ is a subshift of finite type. Also Cohen, Goodman-Strauss and Rieck \cite{cohen2018} used a similar coding to the one in the above theorem to show that a hyperbolic group admits a strongly aperiodic subshift of finite type if
and only if it has at most one end. The paper is organized as follows

In section 2, we give a sketch of the proof.

In section 3, we define distance-like functions and we show how to assign a gradient ray to each such function

In section 4, we state and prove the first main theorem. We also give a non-example for the case where the gradient rays are not Morse. In other words, we give an example of two $h$-gradient rays $\alpha$ and $\beta$ where $d(\alpha(t), \beta(t)$ is unbounded.

In section 5, we introduced few objects from Symbolic Dynamics and we show how the first main theorem can be refined to imply the second.

In section 6, we study the behaviour of Busemann functions whose defining rays are Morse and we show that level sets of these functions, have very similar behaviour to horocycles in the hyperbolic space $\mathbb{H}^n$.

The author would like to thank his main advisor Johanna Mangahas and his coadivisor Ruth Charney for their exceptional support and guidance. He would also like to thank Bernard Badzioch, David Cohen, Joshua Eike and Devin Murray for helpful conversations.

\section{ Sketch of the proof}
\begin{figure} 
    \centering
    \label{fig: Morse}

\begin{tikzpicture}[scale=.8]

\draw[very thick,red] (0,0) circle (3cm);

\draw[latex-,very thick,red] (0,3) .. controls ++(0,-1) and ++({-2*cos(-45)},{-2*sin(-45)}) ..({2*cos(-45)},{2*sin(-45)});
\draw[very thick,red] (0,3) .. controls ++(0,-1) and ++({-2*cos(-135)},{-2*sin(-135)}) ..({2*cos(-135)},{2*sin(-135)});

\draw[very thick,red] (0,3) .. controls ++(0,-1) and ++({-1.1*cos(-160)},{-1.1*sin(-160)}) ..({2*cos(-160)},{2*sin(-160)});

\draw[very thick,red] (0,3) .. controls ++(0,-1) and ++({-1.1*cos(-20)},{-1.1*sin(-20)}) ..({2*cos(-20)},{2*sin(-20)});


\draw[very thick,red] (0,3) .. controls ++(0,-2) and ++({-2*cos(-120)},{-2*sin(-120)}) ..({2*cos(-120)},{2*sin(-120)});
\draw[very thick,red] (0,3) .. controls ++(0,-2) and ++({-2*cos(-70)},{-2*sin(-70)}) ..({2*cos(-70)},{2*sin(-70)});

\draw[very thick,red] (0,3) .. controls ++(0,-1) and ++({-1.1*cos(-180)},{-1.1*sin(-180)}) ..({2*cos(-180)},{2*sin(-180)});
\draw[very thick,red, latex-] (0,3) .. controls ++(0,-1) and ++({-1.1*cos(0)},{-1.1*sin(0)}) ..({2*cos(0)},{2*sin(0)});

\draw[very thick,blue, latex-] ({3*cos(120)},{3*sin(120)}) .. controls ++({-2*cos(120)},{-2*sin(120)}) and ++(0,1) ..(-.5,-2);

\draw[very thick,blue, latex-] ({3*cos(60)},{3*sin(60)}) .. controls ++({-2*cos(60)},{-2*sin(60)}) and ++(0,1) ..(.5,-2);

\draw[thick,fill=black] (0,3) circle (.1cm);

\node[above] at (.5,3) {$\zeta$};

\end{tikzpicture}

\caption{The set of blue and red gradient rays are both $h$-gradient rays. The red ones are Morse and therefore they all converge to the same point $\zeta$ in the Morse boundary}

\end{figure}
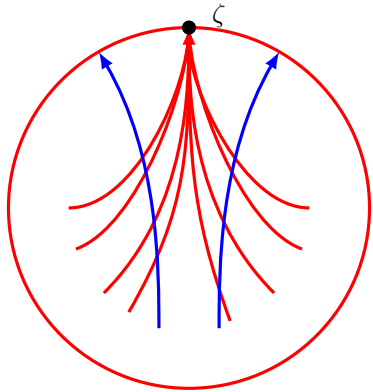

We now give an informal sketch of the proof. But first, I will describe the proof for the case where the space $X$ is $\delta$-hyperbolic (this can be found in \cite{Gromov1987} and has also been explained further in \cite{michelcoornaert1993} and \cite{Coornaert2001}). When $X$ is $\delta$-hyperbolic, we start with a collection of $1$-Lipschitz maps $h:X \rightarrow \mathbb{R}$ which satisfy a distance-like property in the sense that the distance between a point $x$ in $X$ to a level set $h^{-1}( \lambda)$ is the same as the difference $|h(x)-\lambda|$, we denote the collection of such maps by $H$. To each such map $h \in H$, and for any $x \in X$, there is a natural way to define a geodesic ray starting at $x$, called a gradient ray corresponding to $h$, or an $h$-gradient ray. This can be done by taking successive projections on descending level sets. In other words, if $x$ is in the level set of $\lambda_1$ and $\lambda_2 < \lambda_1$, then we can take a projection of $x$ onto the level sets of $\lambda_2$. Now, we can iterate this process to obtain a gradient ray $g$ starting at $x$. We remark that this gradient ray is ``perpendicular" to all of the level sets it crosses in the sense that for any $s, t \in [0,\infty)$, we have $d(h(g(s), h(g(t))=t-s,$ which justifies the use of the word ``gradient" for the geodesic ray $g$. The above suggests the possibility of a map from $H$ to $\partial X$, the Gromov boundary of $X$. Notice that we don't have such a map yet, since we might have two $h$-gradient rays, $\alpha$ and $\beta$ for the same $h \in H,$ such that $d(\alpha(t), \beta(t))$ is unbounded. However, the goemetry of $\delta$-hyperbolic spaces ensures this can't happen (see \cite{Coornaert2001} for example), and therefore, we do get a well defined map from $H$ to $\partial X.$ It is then shown that this map is surjective, this is done in \cite{Gromov1987}, \cite{michelcoornaert1993} and \cite{Coornaert2001}.
\begin{figure}

 \centering

    \label{fig: derivative}
    \vspace{5mm}
    
    \begin{tikzpicture}[scale=.7]

\draw[thick,fill=black] (-2,2) circle (0.1cm); 
\node[right] at (-1.8,2) {$1$};
\draw[thick,fill=black] (-2,4) circle (0.1cm);
\node[right] at (-1.8,4) {$1$};
\draw[thick,fill=black] (-2,6) circle (0.1cm);
\node[right] at (-1.8,6) {$1$};
\draw[thick,fill=black] (-2,8) circle (0.1); 
\node[right] at (-1.8,8) {$1$};
\draw[thick,fill=black] (-4,2) circle (0.1);
\node[right] at (-3.8,2) {$2$};
\draw[thick,fill=black] (-4,4) circle (0.1cm);
\node[right] at (-3.8,4) {$2$};

\draw[thick,fill=black] (-4,6) circle (0.1cm);
\node[right] at (-3.8,6) {$2$};
\draw[thick,fill=black] (-4,8) circle (0.1cm);
\node[right] at (-3.8,8) {$2$};

\draw[thick,fill=black] (-6,2) circle (0.1cm);
\node[right] at (-5.8,2) {$3$};
\draw[thick,fill=black] (-6,4) circle (0.1cm);
\node[right] at (-5.8,4) {$3$};
\draw[thick,fill=black] (-6,6) circle (0.1cm);
\node[right] at (-5.8,6) {$3$};
\draw[thick,fill=black] (-6,8) circle (0.1cm);
\node[right] at (-5.8,8) {$3$};

\draw[thick,fill=black] (-8,2) circle (0.1cm);
\node[right] at (-7.8,2) {$4$};

\draw[thick,fill=black] (-8,4) circle (0.1cm);
\node[right] at (-7.8,4) {$4$};

\draw[thick,fill=black] (-8,6) circle (0.1cm);
\node[right] at (-7.8,6) {$4$};

\draw[thick,fill=black] (-8,8) circle (0.1cm);
\node[right] at (-7.8,8) {$4$};

\draw[<->] (2,1.75) node[below] {$ $} -- (2,2.25)node[above=1]{$0$};
\draw[<->] (1.75,2) -- (2.25,2) node[right=1] { -1};

\draw[<->] (2,3.75) node[below=1] {$0$} -- (2,4.25)node[above=1]{$0$};
\draw[<->] (1.75,4) -- (2.25,4) node[right=1] { -1};

\draw[<->] (2,5.75) node[below=1] {$0$} -- (2,6.25)node[above=4]{$0$};
\draw[<->] (1.75,6) -- (2.25,6) node[right=1] { -1};

\draw[<->] (2,7.75) node[below=1] {$0$} -- (2,8.25)node[above=1]{$ $};
\draw[<->] (1.75,8) -- (2.25,8) node[right=1] { -1};

\draw[<->] (4,1.75) node[below=1] {$ $} -- (4,2.25)node[above=1]{$0$};
\draw[<->] (3.75,2) node[left=4] { 1} -- (4.25,2) node[right=4] { -1};

\draw[<->] (4,3.75) node[below=1] {$0$} -- (4,4.25)node[above=1]{$0$};
\draw[<->] (3.75,4) node[left=1] { 1} -- (4.25,4) node[right=1] { -1};

\draw[<->] (4,5.75) node[below=1] {$0$} -- (4,6.25)node[above=1]{$0$};
\draw[<->] (3.75,6) node[left=4] { 1} -- (4.25,6) node[right=1] { -1};

\draw[<->] (4,7.75) node[below=1] {$0$} -- (4,8.25)node[above=1]{$ $};
\draw[<->] (3.75,8) node[left=1] { 1} -- (4.25,8) node[right=1] { -1};

\draw[<->] (6,1.75) node[below=1] {$ $} -- (6,2.25)node[above=1]{$0$};
\draw[<->] (5.75,2) node[left=1] { 1} -- (6.25,2) node[right=1] { -1};

\draw[<->] (6,3.75) node[below=1] {$0 $} -- (6,4.25)node[above=1]{$0$};
\draw[<->] (5.75,4) node[left=1] { 1} -- (6.25,4) node[right=4] { -1};

\draw[<->] (6,5.75) node[below=1] {$0 $} -- (6,6.25)node[above=1]{$0$};
\draw[<->] (5.75,6) node[left=1] { 1} -- (6.25,6) node[right=1] { -1};

\draw[<->] (6,7.75) node[below=1] {$0 $} -- (6,8.25)node[above=1]{$ $};
\draw[<->] (5.75,8) node[left=1] { 1} -- (6.25,8) node[right=1] { -1};

\draw[<->] (8,1.75) node[below=1] {$ $} -- (8,2.25)node[above=1]{$0$};
\draw[<->] (7.75,2) node[left=1] { 1} -- (8.25,2) node[right=1] {  };

\draw[<->] (8,3.75) node[below=1] {$0 $} -- (8,4.25)node[above=1]{$0$};
\draw[<->] (7.75,4) node[left=1] { 1} -- (8.25,4) node[right=4] {  };

\draw[<->] (8,5.75) node[below=1] {$0 $} -- (8,6.25)node[above=1]{$0$};
\draw[<->] (7.75,6) node[left=1] { 1} -- (8.25,6) node[right=4] {  };

\draw[<->] (8,7.75) node[below=1] {$ 0$} -- (8,8.25)node[above=1]{$ $};
\draw[<->] (7.75,8) node[left=1] { 1} -- (8.25,8) node[right=1] {  };

\end{tikzpicture}
 \caption{To the left, a 1-Lipschitz map $h:\mathbb{Z \oplus Z} \rightarrow \mathbb{Z}$, to the right, its derivative $dh$}

\end{figure}

If $X$ is a proper geodesic metric space, we work out a similar construction for the Morse boundary of the space. In other words, we start with a collection of $1$-Lipschitz maps $h:X \rightarrow \mathbb{R}$ which satisfy a distance-like property. To each such map $h$, and for any $x \in X$, we can still define a gradient ray starting at $x$ by taking successive projections on descending level sets. We then define $H_{\star}$ to be the collection of all such maps having some Morse gradient ray at some point.  A key result of this paper is to show that if $\alpha$ and $\beta$ are two Morse rays (with possibly different starting points and different Morse gauges) associated to such a map $h$, then they must define the same point in the Morse boundary. In other words, we show that if $h \in H_{\star}$ and $\alpha$ and $\beta$ are two Morse gradient rays associated to $h$, then there exists a $C \geq 0$ such that $d(\alpha(t), \beta(t)) \leq C$ for at $t \in [0, \infty).$ This can be interpreted as follows: To each distance-like map $h \in H_{\star}$, one can assign an infinite collection of gradient rays, some of which are Morse and some are not. Whereas those gradient rays do not all converge to the same point in general, the Morse ones do, and hence defining a unique point in the Morse boundary $\partial_{\star}X$ (see Figure 1). So although in general we can't get a well defined map from the space of such functions to whatever notion of a boundary a space might have; if we restrict our attention to the Morse boundary such a map exists.

The above yields a well defined map $\varphi:H_{\star} \rightarrow \partial_{\star} X$. This map is then shown to be surjective using Busemann functions. More precisely, if $c$ is a Morse ray, then the corresponding Buseman function $b_c(x)=\lim [d(x,c(t))-t]$ is in $H_{\star}$ having $c$ as its Morse gradient ray. This proves the first main theorem.

For the second theorem, we do the the following. Let $G=\langle S \rangle $ be a finitely generated group and let $X=Cay(G,S).$ We first consider the subspace $H_0 \subseteq H_{\star}$ whose elements $h$ take integer values on the vertices of $X$. In other words, $H_{0}:=\{h \in H_{\star} | \,h(X^{0}) \subseteq \mathbb{Z} \}.$ We then show that this set is still big enough to surject on the Morse boundary by showing that for any Morse geodesic $c:[0, \infty) \rightarrow X$, starting at a vertex in $X$, the corresponding Busemann function $b_c \in H_{0}.$ Since maps $h \in H_0$ take integer values on vertices, the subspace $H_0$ can be thought of as a subspace of Lip$(G, \mathbb{Z})$ rather than Lip$(X, \mathbb{R}).$ But then we show that each such map $h \in H_0 \subseteq \text{Lip}(G, \mathbb{Z})$ is completely determined by a map $dh:G \rightarrow \mathcal{A}$ for some finite set $\mathcal{A}.$ Therefore, the space $H_0$, which surjects on the Morse boundary, has a natural identification with a subset $Y \subseteq \mathcal{A}^G$ giving the second theorem. The only part left to explain is, given $h \in H_0 \subseteq \text{Lip}(G, \mathbb{Z})$, how can we get a map from $G$ to a finite set $\mathcal{A}$ that completely determines $h$? This is done as follows. First remember that $G$ is generated by $S$. For any  $h \in H_0 \subseteq \text{Lip}(G, \mathbb{Z})$, we define $dh:G \rightarrow \mathcal{A}$ by $$g \mapsto (s \mapsto h(gs)-h(g))$$ where $\mathcal{A}=\{-1, 0, 1\}^S$

Consider Figure 2. To the left, we have a 1-Lipschitz map defined on $h:\mathbb{Z \oplus Z} \rightarrow \mathbb{Z}$, and on the right we have it's derivative $dh:\mathbb{Z \oplus Z} \rightarrow \mathcal{A}$. Notice that $h$ is compeltely determined by $dh$ and by the value of $h$ at one point $p$.

\vspace{2mm}

More generally, if $G_1$ and $G_2$ are groups generated by the symmetrized finite sets $S_1$ and $S_2$ and $h \in \text{Lip}(G_1, G_2)$ is a 1-Lipschitz map, then if we let $\mathcal{A}=(S_2 \cup \{1_{G_2}\})^{S_1}$ we get a map $dh:G_1 \rightarrow \mathcal{A}$ given by $$g \mapsto (s \rightarrow h^{-1}(g)h(gs))$$

This notion of a derivative was introduced by  Cohen \cite{Cohen2017}. In fact, he proved that $\{dh| \, h \in \text{Lip}(G_1, G_2)\} \subseteq \mathcal{A}^{G_1}$ is a subshift of finite type.

\section{Constructing gradient rays} The main goal of this section is to define distance-like functions and to show that there is a natural way of associating a gradient ray to each such function.

Unless mentioned otherwise, for the rest of the paper, $X$ will be a proper geodesic metric space.

\begin{definition}
Let $c:[0,\infty] \rightarrow X$. Define the Busemann function associated to $c$ by $$b_c(x)=\lim_{t\rightarrow \infty} [d(x,c(t))-t]$$ 
\end{definition}

We remark that the above limit exist by the triangular inequality. Also, notice that a Busemann function is 1-Lipschitz.

\begin{definition}[distance-like function]\label{def: distance-like}
A continuous map $h:X \rightarrow \mathbb{R}$ is said to be distance-like if whenever $h(x) \geq \lambda$, for some $\lambda \in \mathbb{R}$, then $h^{-1}(\lambda)$ is nonempty and $h(x)=\lambda +d(x,h^{-1}(\lambda))$
\end{definition}
We remark that it is implied by the definition that if $a \in im(h)$ then so is $b$ for any $b \leq a.$ Denote the space of all 1-Lipschitz functions on $X$ by Lip($X, \mathbb{R})$.

\begin{proposition}\label{Lip}

Any distance-like function $h$ is in Lip($X, \mathbb{R})$.
\end{proposition}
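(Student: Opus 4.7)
The plan is to show directly from Definition \ref{def: distance-like} that $|h(x) - h(y)| \leq d(x,y)$ for all $x, y \in X$, by exploiting the fact that the distance-like condition lets us realize one of the values $h(x), h(y)$ as a distance to a level set containing the other point.

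Concretely, fix $x, y \in X$ and, without loss of generality, assume $h(x) \geq h(y)$. I would set $\lambda := h(y)$. Since $h(x) \geq \lambda$, the distance-like hypothesis guarantees that $h^{-1}(\lambda)$ is nonempty and that
\[
h(x) = \lambda + d\bigl(x, h^{-1}(\lambda)\bigr).
\]
But $y$ itself lies in $h^{-1}(\lambda)$, so $d(x, h^{-1}(\lambda)) \leq d(x,y)$. Combining these gives
\[
h(x) - h(y) = h(x) - \lambda = d\bigl(x, h^{-1}(\lambda)\bigr) \leq d(x,y),
\]
which, together with the symmetric case, yields $|h(x)-h(y)| \leq d(x,y)$.

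There is essentially no obstacle here: the definition of distance-like is strong enough that the Lipschitz estimate falls out in one line once one notices that $y \in h^{-1}(h(y))$ is a competitor for the infimum defining the distance to the level set. The only small point to double-check is that $h^{-1}(\lambda)$ is genuinely nonempty, which is already part of the definition when $h(x) \geq \lambda$, so the reduction WLOG to the case $h(x) \geq h(y)$ is legitimate.
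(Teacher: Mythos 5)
Your proof is correct and is exactly the ``easy'' argument the paper alludes to when it says the proof follows from the definition: taking $\lambda = h(y)$ and using $y \in h^{-1}(\lambda)$ as a competitor for the infimum is the intended one-line verification. No discrepancy to report.
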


\begin{proof}
The proof follows easily from the definition.
\end{proof}

The following proposition states that for any $x \in X$, if $\lambda< h(x),$ then there exist a a point $y$ in the level set $h^{-1}(\lambda),$ realizing the distance between $x$ and the level set $h^{-1}(\lambda).$ Therefore, one gets a geodesic connecting $x$ to $h^{-1}(\lambda)$ realizing the distance $d(x, h^{-1}(\lambda)).$ We will eventually iterate this process to obtain a geodesic ray starting at $x$ which is ``perpendicular" to all the $h$-level sets it crosses

\begin{proposition}\label{funny}
If $h$ is a distance-like function and $x \in X$, $\lambda \in \mathbb{R}$ with $h(x)> \lambda$, then $\exists y \in X$ with $d(x,y)=h(x)-h(y)$, where $h(y)=\lambda$
\end{proposition}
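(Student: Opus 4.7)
The plan is to exploit the properness of $X$ together with the closedness of the level set $h^{-1}(\lambda)$ to produce a nearest-point projection of $x$ onto $h^{-1}(\lambda)$, and then to use the defining equation of a distance-like function to conclude.

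First I would unpack the hypothesis. Since $h(x) > \lambda$ in particular gives $h(x) \geq \lambda$, Definition~\ref{def: distance-like} applies and tells us two things at once: the level set $L := h^{-1}(\lambda)$ is nonempty, and
\[
h(x) = \lambda + d(x, L).
\]
Thus the whole problem reduces to showing that there is some $y \in L$ actually realizing the infimum $d(x,L)$, since once we have such a $y$ we automatically get $h(y) = \lambda$ and
\[
d(x,y) = d(x,L) = h(x) - \lambda = h(x) - h(y),
\]
which is the desired equality.

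Second, I would produce the minimizer by a standard compactness argument. Choose a sequence $y_n \in L$ with $d(x,y_n) \to d(x,L)$. This sequence is bounded, so it lies in some closed ball $\overline{B}(x,R)$; by properness of $X$, that closed ball is compact, so we may pass to a subsequence converging to some $y \in X$. The set $L$ is closed because $h$ is continuous (\{$\lambda\}$ is closed in $\mathbb{R}$), so $y \in L$, i.e.\ $h(y) = \lambda$. Continuity of the distance function then gives $d(x,y) = \lim d(x,y_n) = d(x,L)$, and combining with the equation in the previous paragraph finishes the proof.

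There is no real obstacle here; the only substantive ingredients are (i) that $h$ being continuous makes $L$ closed, and (ii) that $X$ being proper turns the bounded minimizing sequence into a compact one so that the infimum is attained. The proposition is essentially just the statement that on a proper metric space one can realize distances to closed sets, packaged together with the defining property of distance-like functions.
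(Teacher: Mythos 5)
Your proof is correct and follows essentially the same route as the paper: both arguments note that $h^{-1}(\lambda)$ is closed and nonempty, take a minimizing sequence, use properness of $X$ to extract a convergent subsequence realizing $d(x,h^{-1}(\lambda))$, and then conclude via the defining equation $h(x)=\lambda+d(x,h^{-1}(\lambda))$. If anything, you spell out the final combination with the distance-like identity slightly more explicitly than the paper does.
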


\begin{proof}
Notice that since $h$ is continuous, $h^{-1}(\lambda)$ must be a closed set. Set $Y=h^{-1}(\lambda)$. Now, by definition, we have $d(x,Y)=$inf$\{d(x,y)|\,y \in Y\}.$ This yields a sequence of points $y_n \in Y$ such that $d(x,y_n) \rightarrow d(x,Y).$ Choose $r \in \mathbb{R}$ such that $r>d(x,y_n)$ for all $n$ (this is possible since $d(x,y_n)$ is a convergent sequence). Notice that the closed ball $B(x,r)$ contains $y_n$ for all $n$, and since $X$ is proper $B(x,r)$ must be compact. Therefore, the sequence $y_n$ has a convergent subsequence $y_{n_{k}} \rightarrow y$ with $y \in Y$ as $Y$ is closed. Now, since $d(x,-)$ is continuous, we must have $d(x,y_{n_{k}}) \rightarrow d(x,y)$, but since we also have $d(x,y_{n_{k}}) \rightarrow d(x,Y)$, we get $d(x,Y)=d(x,y)$ with $y \in Y.$ This implies the conclusion of the proposition.
\end{proof}

\begin{proposition}
If $h$ is a Busemann function and $h(x) \geq \lambda$ for some $x \in X$ and $\lambda \in \mathbb{R}$, then there must exist $ p \in X$ with $h(x)= \lambda+d(x,p).$ Furthermore, $h(p)=\lambda$
\end{proposition}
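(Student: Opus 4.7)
The plan is to produce the point $p$ as a point on a geodesic ray emanating from $x$ that is asymptotic to the defining ray $c$ of the Busemann function $h = b_c$. The trivial case $h(x) = \lambda$ is handled by $p = x$, so assume $h(x) > \lambda$.

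First I would note that the Busemann function behaves particularly simply along $c$ itself: for any $t \geq 0$, using $d(c(t), c(s)) = s - t$ for $s > t$, one gets $b_c(c(t)) = \lim_{s \to \infty}[(s - t) - s] = -t$. In particular, $b_c \circ c$ tends to $-\infty$, so for all sufficiently large $t$ we have $b_c(c(t)) < \lambda \leq b_c(x)$. For each such $t$, I would pick a geodesic segment $\gamma_t : [0, d(x, c(t))] \to X$ from $x$ to $c(t)$. By properness of $X$, a standard Arzela–Ascoli/diagonal argument extracts a subsequence $\gamma_{t_n}$ converging uniformly on compact sets to a geodesic ray $\gamma : [0, \infty) \to X$ with $\gamma(0) = x$.

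The key technical step is to show $b_c(\gamma(s)) = b_c(x) - s$ for every $s \geq 0$. The $\geq$ inequality is immediate from the 1-Lipschitz property of $b_c$ and $d(x, \gamma(s)) = s$. For $\leq$, I would observe that the points $y_n := \gamma_{t_n}(s)$ lie on the geodesic from $x$ to $c(t_n)$, hence $d(y_n, c(t_n)) = d(x, c(t_n)) - s$. The triangle inequality makes $\tau \mapsto d(y, c(\tau)) - \tau$ monotonically non-increasing with limit $b_c(y)$, so $b_c(y_n) \leq d(y_n, c(t_n)) - t_n = d(x, c(t_n)) - t_n - s$. Passing to the limit, using $y_n \to \gamma(s)$ and continuity of the 1-Lipschitz function $b_c$, yields $b_c(\gamma(s)) \leq b_c(x) - s$, as desired.

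With the asymptotic ray in hand, the conclusion is mechanical: set $p := \gamma(h(x) - \lambda)$. Then $d(x, p) = h(x) - \lambda$ and $h(p) = b_c(x) - (h(x) - \lambda) = \lambda$, which gives both $h(x) = \lambda + d(x, p)$ and $h(p) = \lambda$. I expect the main obstacle to be the linear decay of $b_c$ along $\gamma$: everything hinges on being able to control $d(y_n, c(t_n)) - t_n$ in the limit, and this relies crucially on $y_n$ lying on the geodesic to the far-out point $c(t_n)$ together with the monotonicity inherent in the definition of the Busemann function.
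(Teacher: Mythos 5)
Your argument is correct and complete, and it is essentially the standard argument the paper relies on: the paper's ``proof'' is just a citation of Proposition 3.4 in Coornaert--Papadopoulos, whose content is exactly your construction of an asymptotic ray $\gamma$ from $x$ as a limit of segments $[x,c(t_n)]$ together with the verification that $b_c(\gamma(s))=b_c(x)-s$. You have simply written out in full the details that the paper defers to the reference.
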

\begin{proof}
This Proposition is Proposition 3.4 in \cite{michelcoornaert1993}, they assume that $X$ is $\delta$-hyperbolic and they use a different definition for a distance-like function but their argument is still valid in our settings.

\end{proof}
Now we use the above Proposition to show that any Busemann function is distance-like

\begin{proposition}
Any Busemann function is distance-like.
\end{proposition}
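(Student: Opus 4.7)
The plan is to assemble the definition of \emph{distance-like} piece by piece, with essentially all of the work already done by the preceding proposition and by the fact that Busemann functions are $1$-Lipschitz.

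First I would note that continuity of $h=b_c$ is immediate, since $b_c$ is $1$-Lipschitz (observed right after the definition of the Busemann function). Next, to see that $h^{-1}(\lambda)$ is nonempty whenever $h(x)\geq \lambda$: this is exactly the content of the previous proposition, which produces a point $p\in X$ with $h(p)=\lambda$ and moreover $h(x)=\lambda+d(x,p)$.

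The only thing left is to identify $d(x,p)$ with $d(x,h^{-1}(\lambda))$. The point $p$ lies in $h^{-1}(\lambda)$, so trivially
\[
d(x,h^{-1}(\lambda))\;\leq\; d(x,p)\;=\;h(x)-\lambda.
\]
For the reverse inequality, I would use that $h$ is $1$-Lipschitz: for every $q\in h^{-1}(\lambda)$,
\[
h(x)-\lambda \;=\; h(x)-h(q)\;\leq\; |h(x)-h(q)|\;\leq\; d(x,q).
\]
Taking the infimum over $q\in h^{-1}(\lambda)$ gives $h(x)-\lambda\leq d(x,h^{-1}(\lambda))$, which combined with the previous inequality yields $h(x)=\lambda+d(x,h^{-1}(\lambda))$, as required.

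There isn't really a hard step here; the work was front-loaded into the previous proposition (which guarantees the existence of a point in $h^{-1}(\lambda)$ realizing the gap $h(x)-\lambda$), and the $1$-Lipschitz property supplies the matching upper bound for free. The only thing to be careful about is to invoke both halves: the previous proposition gives the $\geq$ direction by exhibiting a witness $p$, and the Lipschitz bound gives the $\leq$ direction uniformly in $q\in h^{-1}(\lambda)$.
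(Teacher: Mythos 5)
Your proof is correct and follows essentially the same route as the paper's: invoke the preceding proposition to produce a point $p$ in $h^{-1}(\lambda)$ realizing $h(x)=\lambda+d(x,p)$, then use the $1$-Lipschitz property to show no other point of the level set can be closer. Your writeup is in fact slightly more careful than the paper's in separating the two inequalities explicitly.
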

\begin{proof}
Let $h$ be a Busemann function and let $x \in X$, $\lambda \in \mathbb{R}$ with $h(x) \geq \lambda $. By the previous Proposition, there must exist some $p \in X$ such that $h(x)=\lambda+d(x,p)$ with $h(p)= \lambda$. Therefore, we need only to show that $d(x,p)=d(x,h^{-1}( \lambda))$. But since a Busemann function is 1-Lipschitz, any $p' \in X$ with $h(p')=\lambda$ must satisfy $h(x)-h(p')=d(x,p) \leq d(x,p')$. Therefore, $d(x,p)=d(x,h^{-1}(\lambda)).$
\end{proof}

\begin{definition}[gradient arc]\label{def: gradient arc}
Let $h$ be a distance-like function, an $h$-gradient arc is a path $g: I \rightarrow X$ parametrized by the arc's length such that $h(g(t))-h(g(s))=s-t$ for all $s, t \in I$. An $h$-gradient ray is a geodesic ray whose restriction to any interval $I \subset [0 ,\infty)$ is an $h$-gradient arc
\end{definition}

The following three lemma's are stated and proved in\cite{Coornaert2001} for the  case where $X$ is $\delta$-hyperbolic. However, they only use that $X$ is a proper geodesic metric space which is the case here

\begin{lemma}[concatenation of gradient arcs]
Let $I_1$ and $I_2$ be closed intervals of the real line such that $I_2$ begins where $I_1$ ends. If $I=I_1 \cup I_2$ and $g:I \rightarrow X$ is a path whose restrictions to $I_1$ and $I_2$ are $h$-gradient arcs, then $g$ itself is an $h$-gradient arc.
\end{lemma}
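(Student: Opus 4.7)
The plan is to reduce the claim to checking the gradient identity for one pair of points $s, t$ straddling the common endpoint, since within each $I_i$ the identity already holds by hypothesis. Let $p$ denote the point where $I_1$ ends and $I_2$ begins, and take arbitrary $s, t \in I$. If $s, t$ both lie in $I_1$ or both in $I_2$ there is nothing to prove. By symmetry, assume $s \in I_1$ and $t \in I_2$, so $s \leq p \leq t$. Applying the gradient property of $g|_{I_1}$ to the pair $(s,p)$ gives $h(g(s)) - h(g(p)) = p - s$, and applying the gradient property of $g|_{I_2}$ to the pair $(p,t)$ gives $h(g(t)) - h(g(p)) = p - t$. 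Subtracting the first from the second yields
\[
h(g(t)) - h(g(s)) = (p - t) - (p - s) = s - t,
\]
which is exactly the gradient identity for $(s,t)$ on the whole of $I$.

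The only remaining requirement in Definition~\ref{def: gradient arc} is that $g$ itself be parametrized by arc length. Since each restriction $g|_{I_i}$ is arc length parametrized, the length of $g$ on a subinterval $[s,t]$ with $s \leq p \leq t$ decomposes as $L(g|_{[s,p]}) + L(g|_{[p,t]}) = (p-s) + (t-p) = t-s$, and within a single $I_i$ the property holds by assumption. Hence $g$ is parametrized by arc length on all of $I$.

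There is essentially no obstacle here: the lemma is a purely formal consequence of the additivity of both the arc length parametrization and the linear-in-parameter behavior of $h \circ g$. The only subtle point one should mention is that continuity of $g$ across the junction $p$ is implicit in the hypothesis that $g$ is a path on the union $I_1 \cup I_2$, so no gluing argument is needed. Putting these two observations together gives the lemma immediately, without invoking any hyperbolicity or properness of $X$.
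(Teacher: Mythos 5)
Your proof is correct. The paper does not actually prove this lemma itself---it simply cites \cite{Coornaert2001}, remarking that the argument there only uses that $X$ is a proper geodesic metric space---and your telescoping computation through the junction point $p$, together with the additivity of arc length over the subdivision $[s,p]\cup[p,t]$, is precisely that standard argument (with the signs in $h(g(t))-h(g(s))=s-t$ handled correctly), so it serves as a complete self-contained replacement for the citation.
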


\begin{lemma}(characterization of $h$-gradient arcs)
Let $X$ be a proper geodesic metric space and let $h$ be a distance-like function:
\begin{itemize}
    \item If $g$ is an $h$-gradient ray then $g$ is a geodesic
    \item If $x,  y \in X$ are points so that $h(x)-h(y)=d(x,y)$, and $g$ is a geodesic connecting $x$ to $y$ then $g$ is an $h$-gradient arc.
\end{itemize}

\end{lemma}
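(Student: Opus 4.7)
The plan is to derive both parts from the 1-Lipschitz property of $h$ established in Proposition \ref{Lip}, together with the parametrization and distance hypotheses. In each case the strategy is to produce matching upper and lower bounds on $d(g(s),g(t))$ (respectively on $h(g(s))-h(g(t))$) and observe that they must coincide.

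For the first bullet, I would start with an $h$-gradient arc $g\colon I \to X$. Being parametrized by arc length gives the one-sided estimate $d(g(s),g(t)) \leq |t-s|$. For the reverse direction, apply the 1-Lipschitz property of $h$ to obtain $|s-t| = |h(g(t))-h(g(s))| \leq d(g(s),g(t))$. Combining the two yields $d(g(s),g(t)) = |t-s|$, so the restriction of $g$ to any compact subinterval is a geodesic. An $h$-gradient ray is by definition a ray whose restriction to every finite subinterval of $[0,\infty)$ is an $h$-gradient arc, so the same estimate extends globally and $g$ is a geodesic ray.

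For the second bullet, let $g\colon [0,d(x,y)] \to X$ be a geodesic with $g(0)=x$, $g(d(x,y))=y$, and $h(x)-h(y)=d(x,y)$. Fix $0 \leq s \leq t \leq d(x,y)$ and apply the 1-Lipschitz estimate to each of the three subsegments of $g$ from $x$ to $g(s)$, from $g(s)$ to $g(t)$, and from $g(t)$ to $y$:
\[
h(x)-h(g(s)) \leq s, \quad h(g(s))-h(g(t)) \leq t-s, \quad h(g(t))-h(y) \leq d(x,y)-t.
\]
Summing gives $h(x)-h(y) \leq d(x,y)$, and since equality holds by hypothesis, each of the three bounds must be tight. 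In particular $h(g(s))-h(g(t)) = t-s$ for every $s \leq t$, which is precisely the $h$-gradient arc condition; $g$ is already parametrized by arc length since it is a geodesic, so the definition is satisfied.

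The argument is essentially bookkeeping of one-sided inequalities; the only observation worth highlighting is the telescoping step in the second bullet, where the three Lipschitz bounds along the geodesic sum to exactly $d(x,y)$, forcing simultaneous equality. I do not anticipate any serious obstacle, which is consistent with the authors' remark that the proof from \cite{Coornaert2001} carries over verbatim to the proper geodesic setting.
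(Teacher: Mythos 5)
Your argument is correct, and it is essentially the standard one: the paper itself offers no proof of this lemma, deferring entirely to \cite{Coornaert2001}, and your squeeze argument (arc-length parametrization versus the 1-Lipschitz bound in the first bullet, and the telescoping of three Lipschitz inequalities forced to be simultaneously tight in the second) is precisely the argument that reference uses, transported verbatim to the proper geodesic setting as the paper claims it can be.
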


\begin{figure}
    \centering
    
    \label{fig: horospheres}
\begin{tikzpicture}

\draw[very thick,red] (4,0) .. controls ++({-4*cos(30)},{4*sin(30)}) and ++({4*cos(30)},{4*sin(30)}) ..(-4,0);

\draw[very thick,red] (4,3) .. controls ++({-4*cos(30)},{4*sin(30)}) and ++({4*cos(30)},{4*sin(30)}) ..(-4,3);

\draw[very thick,red] (4,2) .. controls ++({-4*cos(30)},{4*sin(30)}) and ++({4*cos(30)},{4*sin(30)}) ..(-4,2);
\draw[very thick,red] (4,1) .. controls ++({-4*cos(30)},{4*sin(30)}) and ++({4*cos(30)},{4*sin(30)}) ..(-4,1);

\draw[thick,fill=black] (0,1.5) circle (.06cm); \node[right] at (0,1.2) {$p$};
\draw[thick,fill=black] (0,2.5) circle (.06cm); \node[right] at (0,2.2) {$p_1$};
\draw[thick,fill=black] (0,3.5) circle (.06cm); \node[right] at (0,3.2) {$p_2$};
\draw[thick,fill=black] (0,4.5) circle (.06cm); \node[right] at (0,4.2) {$p_3$};

\draw[very thick,blue, -latex] (0,1.5) -- (0,5.3);
\node[right] at (.1,5.1) {$g(t)$};

\end{tikzpicture}

\caption{Constructing a gradient ray}

\end{figure}
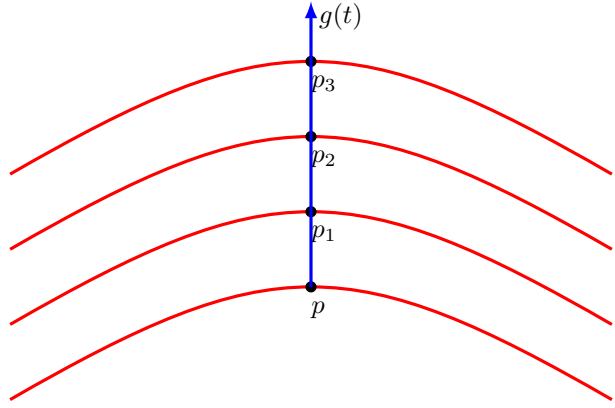

\begin{lemma}[gradient rays]
For any distance-like function $h$ and any $p \in X$, there exists an $h$-gradient ray starting at $p$
\end{lemma}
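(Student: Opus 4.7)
The plan is to construct $g$ one unit of $h$-value at a time: iterate Proposition~\ref{funny} to produce a sequence of base points along which $h$ drops by exactly $1$, join consecutive points by geodesics that the characterization-of-gradient-arcs lemma recognizes as $h$-gradient arcs, and then invoke the concatenation lemma to verify that the union is a gradient ray.

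Concretely, I would recursively build points $p_0 = p, p_1, p_2, \ldots$ with $h(p_n) = h(p) - n$ and $d(p_n, p_{n+1}) = 1$ as follows: given $p_n$, apply Proposition~\ref{funny} with $x = p_n$ and $\lambda = h(p_n) - 1 < h(p_n)$ to produce $p_{n+1}$ with $h(p_{n+1}) = h(p_n) - 1$ and $d(p_n, p_{n+1}) = h(p_n) - h(p_{n+1}) = 1$. The level set $h^{-1}(h(p_n) - 1)$ is nonempty by Definition~\ref{def: distance-like}, so the recursion never stalls. Next, choose for each $n$ a unit-speed geodesic $\sigma_n : [n, n+1] \to X$ from $p_n$ to $p_{n+1}$; since $h(p_n) - h(p_{n+1}) = 1 = d(p_n, p_{n+1})$, the characterization lemma shows that $\sigma_n$ is an $h$-gradient arc. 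Gluing the $\sigma_n$ along their common endpoints yields a continuous map $g : [0, \infty) \to X$ with $g(0) = p$ and $g|_{[n, n+1]} = \sigma_n$ for every $n$.

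To finish, I would verify that $g$ is an $h$-gradient ray. An induction on $n$ using the concatenation lemma shows that $g|_{[0, n]}$ is an $h$-gradient arc for every $n$; since the defining identity $h(g(t)) - h(g(s)) = s - t$ of a gradient arc is automatically inherited by any subinterval, this makes $g|_I$ an $h$-gradient arc for every bounded interval $I \subset [0, \infty)$, which is precisely the definition of an $h$-gradient ray.

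The whole argument is essentially routine once the earlier pieces are in hand; the main thing to check is simply that each recursive call of Proposition~\ref{funny} is legal, and this is immediate from the distance-like hypothesis. A notable feature of this approach is that because $g$ is built as an explicit concatenation, no Arzel\`a--Ascoli-type limiting procedure is required beyond the compactness that is already embedded inside the proof of Proposition~\ref{funny}.
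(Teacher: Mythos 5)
Your construction is exactly the one the paper uses (and the one it cites from Coornaert--Papadopoulos, Proposition 2.13): iterate Proposition~\ref{funny} with $\lambda=h(p_n)-1$ to project onto descending level sets, recognize each connecting geodesic as a gradient arc via the characterization lemma, and glue with the concatenation lemma. The proposal is correct and simply fills in the details the paper leaves to the reference.
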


\begin{proof}
This follows from Lemmas 3.4, 3.8 and 3.9 of this section. The idea of the proof is to set $\lambda_1=h(p)-1$, take a projection of $p$ on $h^{-1}( \lambda_1)$ and call this projection $p_1.$ And then we repeat(see Figure 3), in other words, we set $\lambda_2=h(p_1)-1$, take a projection of $p_1$ on $h^{-1}( \lambda_2)$ and call this projection $p_2.$ This process, yields an $h$-gradient ray $g$. Fore more details, see Proposition 2.13 in \cite{Coornaert2001}. 
\end{proof}

\section{Main Result}

In section, we prove the first main result of this paper. The following definition is a coarse version of a convex functions

\begin{definition}

A map $h:X \rightarrow \mathbb{R}$ is said to be $K$-convex if there exists some $K>0$ such that given any geodesic $[x_0,x_1]$ and  $t \in [0,1]$, if $x_t $  satisfies $d(x_0,x_t)=td(x_0,x_1)$ then we must have $$h(x_t) \leq (1-t)h(x_0)+th(x_1)+K$$
\end{definition}

\begin{definition}Let $X$ be a geodesic metric space and let $\delta \geq 0$. Given $x,y,z \in X$. Let $\Delta:=[x,y] \cup [y,z] \cup [x,z]$ be a geodesic triangle with vertices $x,y$ and $z.$ We say that $\Delta$ is $\delta$-thin if the union of the $\delta$-neighborhoods of any two sides of the triangle contains the third. We refer to this condition as \emph{slim condition 1}
\end{definition}

\begin{remark}\label{tripod} For any proper geodesic metric space $X$ and any $x,y,z \in X$, if $\Delta:=[x,y] \cup [y,z] \cup [x,z]$ is a geodesic triangle with vertices $x,y$ and $z,$ there must exist three non-negative integers $a,b,c$ and three points $i_x, i_y, i_z$ such that $a+b=|[x,y]|$, $a+c=|[x,z]|$ and $b+c=|[y,z]|$ where $i_x \in [y,z]$, $i_y \in [x,z]$ and $i_z \in [x,y]$ satisfying that $a=d(x,i_z)=d(x,i_y)$, $b=d(y,i_z)=d(y,i_x)$ and $c=d(z,i_x)=d(z,i_y).$ The points $i_x, i_y$ and $i_z$ are called the \emph{internal} points of $\Delta$.
\end{remark}

\begin{definition}Let $X$ be a geodesic metric space and let $\delta' \geq 0$. Given $x,y,z \in X$, define $\Delta:=[x,y] \cup [y,z] \cup [x,z]$ be a geodesic triangle with vertices $x,y$ and $z.$ We say that $\Delta$ is $\delta'$-thin provided that for any point $w\in \{x,y,z\}$, if we consider the two subgeodesics $\alpha_1$ and $\alpha_2$ connecting $w$ to the internal points on those two geodesics, then whenever $u \in im(\alpha_1)$, there must exist some $v \in im(\alpha_2)$ with $d(u,v) \leq \delta'$. We refer to this condition as \emph{slim condition 2} 
\end{definition}

It is clear that if a triangle is $\delta'$-slim with slim condition 2, then it is $\delta$-slim with slim condition 1. But we also have:
\begin{lemma}\label{equivalence}
Slim condition 1 and slim condition 2 are equivalent.
\end{lemma}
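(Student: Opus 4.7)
The forward direction ($\delta'$-slim with condition 2 implies $\delta$-slim with condition 1 for $\delta = \delta'$) is already noted, so the task is to prove that slim condition 1 with constant $\delta$ implies slim condition 2 with some constant $\delta' = \delta'(\delta)$. My plan is to mimic the classical tripod approximation for $\delta$-hyperbolic triangles, adapted to a single triangle with explicit bookkeeping of constants.

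The first step is to establish an internal points lemma: if $\Delta = [x,y] \cup [y,z] \cup [x,z]$ satisfies slim condition 1, then the three internal points $i_x, i_y, i_z$ from Remark~\ref{tripod} are pairwise within some bounded distance, say $4\delta$. To see this, apply slim condition 1 to $i_y \in [x,z]$: it is within $\delta$ of some $w \in [x,y] \cup [y,z]$. Using the reverse triangle inequality along with the fact that $d(x,i_y) = a$ and $d(z,i_y) = c$, one checks that if $w \in [x,y]$ then $d(x,w) \in [a-\delta, a+\delta]$, forcing $d(w, i_z) \le \delta$ and hence $d(i_y, i_z) \le 2\delta$; and if $w \in [y,z]$ then similarly $d(i_y, i_x) \le 2\delta$. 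Running the same case analysis at $i_x$ and $i_z$ and taking the longer chain when the two direct bounds go to different vertices yields the universal bound $d(i_u, i_v) \le 4\delta$ for all pairs.

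With that in hand, I would verify slim condition 2 at an arbitrary vertex, say $w = x$. Let $u$ be a point on the subgeodesic $\alpha_1 = [x, i_y] \subset [x,z]$ at distance $t \le a$ from $x$; we seek $v \in \alpha_2 = [x, i_z] \subset [x,y]$ with $d(u,v)$ bounded. Applying slim condition 1 to $u$ yields a point $w' \in [x,y] \cup [y,z]$ with $d(u, w') \le \delta$. In the first case $w' \in [x,y]$, the reverse triangle inequality forces $d(x,w') \in [t-\delta, t+\delta]$, so either $w' \in [x, i_z]$ directly (giving $d(u, w') \le \delta$) or $w'$ overshoots $i_z$ by at most $\delta$, in which case $v := i_z$ works and $d(u, v) \le 2\delta$. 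In the second case $w' \in [y,z]$, the lower bound $d(x, w') \ge a$ (which follows from applying triangle inequality along both $[x,y]$ and $[x,z]$) combined with $d(x, w') \le t + \delta \le a + \delta$ forces $w'$ within $\delta$ of $i_x$; then $d(u, i_x) \le 2\delta$, and using the internal points lemma $d(i_x, i_z) \le 4\delta$, we conclude $d(u, i_z) \le 6\delta$ so $v := i_z \in \alpha_2$ suffices. In all cases $d(u,v) \le 6\delta$, which gives slim condition 2 with $\delta' = 6\delta$.

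The technical obstacle is the internal points estimate: the slim condition 1 at a single vertex gives a dichotomy, not a uniform bound, so the bound $d(i_x,i_y) \le 4\delta$ requires combining the outputs of slim condition 1 applied at all three internal points and traversing a path through a common internal point when no direct neighbor is found. The remaining case analysis is then routine reverse-triangle-inequality bookkeeping along geodesics. Symmetry across the three vertices $x, y, z$ and across the two subgeodesics $\alpha_1, \alpha_2$ at each vertex reduces the full statement of slim condition 2 to the single case handled above.
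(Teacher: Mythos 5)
Your proof is correct and follows essentially the same route as the paper, which simply cites the slim-implies-thin argument of Bridson--Haefliger (III.H, Proposition 1.17): first bound the pairwise distances between the internal points using slim condition 1, then transfer a point on one subgeodesic at a vertex to the other subgeodesic, falling back on the internal point $i_z$ and the insize bound in the remaining case. You merely write out explicitly, with the concrete constant $\delta' = 6\delta$, what the paper delegates to the citation.
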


\begin{proof}

The argument in Proposition 1.17 in the \emph{Reformulations of the Hyperbolicity} section of \cite{BH} shows the above Lemma provided that $X$ is a $\delta$-hyperboic metric space, however, since their argument doesn't use thinness of any triangles in $X$ aside from the $x,y,z$ one, the argument is still valid here.

\end{proof}

\begin{lemma}\label{convex}
Let $X$ be a geodesic metric space space, and let $x, y, z$ be the vertices of a $\delta'$-thin triangle in the sense of slim condition 2. Then for any $t \in [0,1]$ and any $x_t \in [y,z]$ with $d(y,x_t)=td(y,z),$ we must have 

$$d(x,x_t) \leq (1-t)d(x,y)+td(x,z)+2\delta'$$
\end{lemma}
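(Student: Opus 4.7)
The plan is to pass through the internal points $i_x, i_y, i_z$ of the triangle (see Remark~\ref{tripod}) and do a case analysis according to which side of $i_x$ the point $x_t$ sits on. Write $a = d(x, i_z) = d(x, i_y)$, $b = d(y, i_z) = d(y, i_x)$, $c = d(z, i_x) = d(z, i_y)$, and set $s := d(y, x_t) = t \cdot d(y, z)$. I will treat the two cases $s \leq b$ and $s > b$, which correspond to $x_t$ lying in the subsegment $[y, i_x]$ or the subsegment $[i_x, z]$ of $[y, z]$, respectively.

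\textbf{Case 1: $s \leq b$.} Then $x_t$ lies on the subgeodesic $\alpha_1 = [y, i_x]$ emanating from the vertex $y$. Applying slim condition~2 at the vertex $y$ with the companion subgeodesic $\alpha_2 = [y, i_z] \subseteq [x,y]$, I obtain a point $v \in [y, i_z]$ with $d(x_t, v) \leq \delta'$. Since $v$ sits on the geodesic $[x,y]$, I have $d(x,v) = d(x,y) - d(y,v)$, and by the triangle inequality $d(y,v) \geq d(y, x_t) - d(x_t, v) \geq s - \delta'$. Combining,
\[
d(x, x_t) \leq d(x, v) + d(v, x_t) \leq d(x,y) - s + 2\delta'.
\]
The desired bound $d(x,y) - s + 2\delta' \leq (1-t)d(x,y) + t\, d(x,z) + 2\delta'$ then reduces (after clearing $2\delta'$ and using $s = t\, d(y,z)$) to the ordinary triangle inequality $d(x,y) - d(x,z) \leq d(y,z)$.

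\textbf{Case 2: $s > b$.} Now $x_t$ lies on the subgeodesic $[i_x, z]$ from the vertex $z$, at distance $u := d(z, x_t) = (1-t)\, d(y, z)$ from $z$. Applying slim condition~2 at $z$, with $\alpha_1 = [z, i_x]$ and $\alpha_2 = [z, i_y] \subseteq [x, z]$, I find $v \in [z, i_y]$ with $d(x_t, v) \leq \delta'$. The same argument as in Case~1, with the roles of $y$ and $z$ swapped, gives
\[
d(x, x_t) \leq d(x, z) - u + 2\delta',
\]
and the desired inequality reduces once again to the triangle inequality $d(x,z) \leq d(x,y) + d(y,z)$.

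Neither case is really an obstacle; the only subtle point is that slim condition~2 as stated here guarantees only the \emph{existence} of a nearby companion point $v$, not one at the prescribed distance from the vertex. That is why one must absorb the discrepancy $|d(y,v) - s| \leq \delta'$ (or $|d(z,v) - u| \leq \delta'$) using the triangle inequality, which is exactly what produces the factor of $2\delta'$ rather than $\delta'$ in the conclusion. Putting the two cases together completes the proof.
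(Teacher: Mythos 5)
Your proof is correct and follows essentially the same route as the paper's: split into cases according to whether $x_t$ lies on $[y,i_x]$ or $[i_x,z]$, apply slim condition 2 at the corresponding vertex to get a companion point $v$ within $\delta'$, and absorb the discrepancy via the triangle inequality to produce the $2\delta'$. The only (cosmetic) difference is that you reduce the final comparison directly to the triangle inequality $d(x,y)-d(x,z)\leq d(y,z)$, whereas the paper carries out the same estimate explicitly in terms of the tripod lengths $a,b,c$ from Remark~\ref{tripod}.
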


\begin{proof} Let $i_x, i_y, i_z$ and $a,b,c$ be as in Remark \ref{tripod}. Let $x_t \in [y,z],$ with $d(y,x_t)=td(y,z).$ Then $x_t \in [y,i_x] \cup [i_x,z]$. Therefore, we have two cases to consider. If $x_t \in [y,i_x],$ then by $\delta'$-slimness, we must have some $w \in [y,i_z]$ such that $d(x_t,w) \leq \delta'$. This implies using triangular inequality imples that

$$d(y,x_t)- \delta' \leq d(y,w) \leq d(y, x_t)+\delta'$$

$$d(x,w)-\delta' \leq d(x,x_t) \leq d(x,w)+ \delta'$$

By the above two equations, we get that
\begin{align*}
    d(x,x_t)&\leq d(x,w)+ \delta' \\
    &=(a+b)-d(y,w)+\delta'\\
    &\leq (a+b)+\delta'-d(y,x_t)+\delta'\\
    &=(a+b)-t(b+c)+2\delta'\\
    &=a+(1-t)b-tc+2\delta' \\
    &\leq a+(1-t)b+tc+ 2\delta' \\
    &=(1-t)(a+b)+t(a+c)+ 2\delta'\\
\end{align*}

Now for the second case, if $x_t \in [i_x, z],$ by $\delta'$-slimness of the triangle, there must exist some $u \in [z,i_y]$ with $d(x_t,u) \leq \delta'$. By the triangular inequality, we get that 

$$d(z,x_t) -\delta' \leq d(z,u) \leq d(z, x_t) +\delta'$$

$$d(x, u)- \delta' \leq d(x,x_t) \leq d(x,u)+ \delta$$

Therefore, we get that 
\begin{align*}
    d(x,x_t)&\leq d(x,u)+ \delta' \\
    &=(a+c)-d(z,u)+\delta' \\
    &\leq (a+c) +\delta'-d(z,x_t)+\delta' \\
    &=(a+c)-(1-t)(b+c)+2\delta'\\
    &=a-(1-t)b+tc+ 2\delta' \\
    &\leq a+(1-t)b+tc+ 2\delta'\\
    &=(1-t)(a+b)+t(a+c)+2\delta'\\
\end{align*}

\end{proof}

Let $\text{Lip}(X, \mathbb{R})$ be defined to be the set of all 1-Lipschitz maps from $X$ to $\mathbb{R}$. Let $\overline{\text{Lip}(X, \mathbb{R})}$ denote the quotient of $\text{Lip}(X, \mathbb{R})$ by all constants Lipschitz maps. Notice that for a fixed $p$, the space $\overline{\text{Lip}(X, \mathbb{R})}$ is homeomorphic to the space $\text{Lip}_{p}(X, \mathbb{R})$ consisting of all $f \in \text{Lip}(X, \mathbb{R})$ such that $f(p)=0.$ It is an easy exercise to see that the space $\text{Lip}_{p}(X, \mathbb{R})$ is compact (alternatively, this is Proposition 3.1 in \cite{Maher2015RandomWO}). Notice that if $h_1,$ $h_2$ are distance-like function that differ by a constant, then $g$ is a gradient ray for $h_1$ if and only if it is a gradient ray for $h_2.$ Therefore, it is more natural to think of gradient rays as being associated to $\overline{h}$ rather than being associated to a specific $h$.

\begin{definition}[Morse] A geodesic $\gamma$ in a metric space
is called $N$-Morse, where $N$ is a function $[1, \infty) \times [0, \infty) \rightarrow [0, \infty)$, if for any
$(\lambda, \epsilon)$-quasi-geodesic $\sigma$ with endpoints on $\gamma$, we have $\sigma \subseteq \mathcal{N}_{N(\lambda,\epsilon)}(\gamma)$. The
function $N(\lambda, \epsilon)$ is called a Morse gauge. A geodesic $\gamma$ is said to be Morse if it is $N$-Morse for some $N.$
\end{definition}

The following is a key Lemma; we will use it to show that the space of distance-like functions having a Morse gradient ray is $G$-equivariant where $G$ is any finitely generated group.

\begin{lemma} \label{AllMorse}
Let $p, q \in X$ and let $h$ be a distance-like function. Then $h$ has a Morse gradient ray starting at $p$ if and only if it has a Morse gradient ray starting at $q$.
\end{lemma}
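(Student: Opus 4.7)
The statement is symmetric in $p$ and $q$, so I will assume $\alpha:[0,\infty)\to X$ is an $N$-Morse $h$-gradient ray starting at $p$ and build a Morse $h$-gradient ray starting at $q$. Write $d_0:=d(p,q)$ and $c_0:=h(p)-h(q)$, so $|c_0|\le d_0$.

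The plan is to produce an $h$-gradient ray $\beta$ from $q$ that lies inside a uniform neighborhood $\mathcal{N}_D(\alpha)$; being inside a bounded neighborhood of the $N$-Morse ray $\alpha$ automatically makes $\beta$ itself Morse, with a gauge $N'$ depending only on $N$ and $D$. Indeed, given any $(\lambda,\epsilon)$-quasi-geodesic $\tau$ with endpoints on $\beta$, I would append to each end a geodesic of length at most $D$ landing on $\alpha$, yielding a $(\lambda,\epsilon+2D)$-quasi-geodesic with endpoints on $\alpha$; the $N$-Morse property then places it inside $\mathcal{N}_{N(\lambda,\epsilon+2D)}(\alpha)$, so $\tau\subseteq \mathcal{N}_{N(\lambda,\epsilon+2D)+D}(\beta)$.

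To build $\beta$, I will use Proposition~\ref{funny} to pick, for each integer $T>c_0$, a point $p_T\in h^{-1}(h(p)-T)$ realizing $d(q,p_T)=h(q)-h(p_T)=T-c_0$; by the second bullet of the characterization lemma for $h$-gradient arcs, the geodesic $\sigma_T:=[q,p_T]$ is then an $h$-gradient arc of length $T-c_0$. Properness of $X$ together with Arzel\`a--Ascoli will produce a subsequential limit $\beta:[0,\infty)\to X$ starting at $q$, which is a geodesic and, by continuity of $h$ together with the identity $h(\sigma_T(s))=h(q)-s$, is itself an $h$-gradient ray.

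What remains is to ensure the $\sigma_T$ lie inside a fixed neighborhood of $\alpha$. The easy half is that $[p,q]\cup[q,\alpha(T)]$ is a $(1,2d_0)$-quasi-geodesic with both endpoints on $\alpha$, and hence by the $N$-Morse property lies in $\mathcal{N}_{N(1,2d_0)}(\alpha)$. The hard part, and the main obstacle I expect, is proving a uniform bound $d(p_T,\alpha(T))\le D'$ with $D'$ independent of $T$. I plan to attack this by invoking the slim-triangle behavior of the geodesic triangle $\{p,p_T,\alpha(T)\}$: its side $[p,\alpha(T)]\subseteq\alpha$ is Morse, which should force the triangle to be $\delta'$-thin for some $\delta'=\delta'(N)$, and then the coarse convexity of Lemma~\ref{convex}, applied with $x=p$, should squeeze $d(p_T,\alpha(T))$ in terms of $d_0$ and $\delta'$, using the constraints $h(p_T)=h(\alpha(T))$ and $d(q,p_T)\le d(q,\alpha(T))\le T+d_0$. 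Once this bound is secured, each $\sigma_T$ and therefore the limit $\beta$ lie in $\mathcal{N}_D(\alpha)$ for some $D=D(N,d_0)$, completing the construction.
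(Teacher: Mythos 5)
Your overall architecture --- produce $h$-gradient arcs issuing from $q$, pass to an Arzel\`a--Ascoli limit, check the limit is a gradient ray, then check it is Morse --- is the same as the paper's. Where you differ is in the choice of targets: the paper aims its arcs at the points $g(n)$ on the given Morse ray itself, so that Morseness of the limit is immediate from Lemma 2.8 of \cite{Cordes2017} and the remaining work is to verify the limit is a gradient ray; you instead aim at closest points $p_T$ of the level sets, which makes the gradient-ray verification clean but shifts all the difficulty onto showing the limit is Morse. That difficulty is exactly where your proof has a genuine gap, and the mechanism you propose for closing it does not work. You want the triangle with vertices $p$, $p_T$, $\alpha(T)$ to be $\delta'(N)$-thin on the grounds that its side $[p,\alpha(T)]\subseteq\alpha$ is Morse. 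A triangle with only \emph{one} Morse side need not be thin: every thinness statement available here (Lemmas 2.2 and 2.3 of \cite{Cordes2017}, Lemma \ref{thin polygons}, and the way these are deployed in Lemmas \ref{unique point} and \ref{busemann is convex}) requires \emph{two} sides to be Morse, and you have no control whatsoever on $[p,p_T]$ or on $[p_T,\alpha(T)]$ --- indeed, bounding the latter is precisely what you are trying to prove, so positing thinness of this triangle is close to assuming the conclusion. The constraints you do have ($h(p_T)=h(\alpha(T))$ and $d(q,p_T)\le d(q,\alpha(T))\le T+d_0$) say only that $p_T$ is an almost-closest point of the level set to $p$; without some thinness input they are compatible with $p_T$ drifting far from $\alpha(T)$ along the level set. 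Lemma \ref{convex} cannot rescue this, since it presupposes the thin triangle and in any case bounds distances from $x$ to points \emph{on} the side $[p_T,\alpha(T)]$, not the length of that side.

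The root problem is that the Morse property of $\alpha$ only controls quasi-geodesics with \emph{both} endpoints on $\alpha$, and your paths $[p,q]\cup[q,p_T]$ terminate at $p_T$, a point you have not yet tied to $\alpha$; extending the gradient arc past $p_T$ does not help, since the extension also need not return to $\alpha$. So as written the "hard part" is not an expected obstacle you will clear by the stated means --- it is unproven, and the proof is incomplete. The most direct repair is to follow the paper's route: take the connecting geodesics to end at $\alpha(n)$ (so Morseness of the limit is free), and then do the extra work of showing the limit is an $h$-gradient ray; alternatively, if you want to keep the closest-point construction, you must supply an independent argument that $d(p_T,\alpha(T))\le D(N,d_0)$, which requires manufacturing a quasi-geodesic with both endpoints on $\alpha$ passing near $p_T$, and no such argument appears in your proposal.
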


\begin{proof} 

Let $p, q \in X$ and let $g$ be a Morse $h$-gradient ray starting at $p$. Notice that since $h(g(t))-h(g(0))=0-t$, then we have that $h(g(t)) \rightarrow -\infty $ as $t \rightarrow \infty.$ In other words, for any $r \in \mathbb{R},$ there exists some $t$ such that $h(g(t)) < r.$ This allows us to construct a sequence of gradient arcs $\{\alpha_n\}_{n \in \mathbb{N}}$ joining $q$ and $g(n).$ By By Arzel`a-Ascoli, there exists a subsequence of $\{\alpha_{n_k}\}_{k \in \mathbb{N}}$ that converges on compact sets to a geodesic ray $\alpha$. This geodesic ray is Morse by Lemma 2.8 in \cite{Cordes2017}. We need to show that this geodesic ray is in fact an $h$-gradient ray. In other words, we should show that for any $s, t \in [0, \infty),$ we must have $h(g(s))-h(g(t))=t-s.$ Fix $s, t \in [0, \infty)$ and let $K$ be a compact set containing both $s$ and $t$. Since $\{\alpha_{n_k}\}_{k \in \mathbb{N}}$ converges to $\alpha$ uniformly on $K$, we get that $\alpha_{n_k}(s) \rightarrow \alpha(s)$ and $\alpha_{n_k}(t) \rightarrow \alpha(t)$. But since $h$ is continuous, we get that $h(\alpha_{n_k}(s)) \rightarrow h(\alpha(s))$ and $h(\alpha_{n_k}(t)) \rightarrow h(\alpha(t))$. Hence, since $\alpha_{n_k}$ are all $h$-gradient arcs, we get that for any $\epsilon>0,$ we have $|[h(\alpha(t)-h(\alpha(s)]-[s-t]|=|[h(\alpha(t)-h(\alpha(s)]-[h(\alpha_{n_k}(t)-h(\alpha_{n_k}(s)]| < \epsilon$, which establishes the desired conclusion.

\end{proof}

\begin{definition}[Morse Convex]\label{Morse-Convex} A distance-like function $h$ is said to be \emph{ $N$-Morse convex} if for any $N$-Morse rays $\alpha$,  $\beta$ there exists a $K>0$ such that if $[x_0,x_1]$ is a geodesic connecting $x_0$ to $x_1$ for some $x_0 \in im(\alpha)$, $x_1 \in im(\beta)$ then for each $t \in [0,1]$ if $x_t$ satisfies $d(x_0, x_t)=td(x_0,x_1)$, the function $h$ must satisfy
 $h(x_t) \leq (1-t)h(x_0)+th(x_1)+K.$ 

\end{definition}

A distance-like function is said to be \emph{Morse Convex} if it is $N$-Morse convex for some Morse gauge $N$.

\begin{notation}

We denote the collection of all $N$-Morse convex functions by $H^{N}$ and we let $H= \bigcup\limits_{N \in \mathcal{M}}H^{N}$ where $\mathcal{M}$ is the collection of all Morse gauges.
\end{notation}

\begin{remark}
Since in a $\delta$-hyperbolic space every geodesic is $N$-Morse for a uniform $N$, our definition of $H$ agrees with the definition of horofunctions given in \cite{Coornaert2001} when $X$ is hyperbolic.
\end{remark}

\begin{remark}
We remark that in Definition \ref{Morse-Convex}, the constant $K$ depends only on $h$, $N$, $\alpha$ and $\beta$ but not on $x_0$ or $x_1.$
\end{remark}

\begin{definition}

We define $H^{N}_{p}$ be the subset of $H^{N}$ with an $N$-Morse gradient ray starting at $p$. And we define $(H_{\star})_{p}$ to be the subset of all $h \in H$, such that $h$ has some Morse gradient ray starting at $p$.
\end{definition}

Notice that Lemma \ref{AllMorse} shows that for any $q \in X,$ we have $(H_{\star})_{p}=(H_{\star})_{q}$. Therefore, $(H_{\star})_{p}$ will be simply denoted by $H_{\star};$ the subset of all $h \in H$ such that $h$ has some Morse gradient ray. 

Again, using Lemma \ref{AllMorse} we can see that $H_{\star}= \bigcup_{N \in \mathcal{M}} H^{N}_p$ where $\mathcal{M}$ is the collection of all Morse gauges.

If we let $H^{N}_{p,p}$ be the subspace of all $h \in H^{N}$ with $h(p)=0$, then we have $\overline{H^{N}_{p}} \cong H^{N}_{p,p}$, where $\overline{H^{N}_{p}}$ is the quotient of $H^{N}_{p}$ by the subspace of all constant maps.

The aim of the next few lemmas is to prove that each $N$-Morse convex function $\overline{h}$ defines a unique point at the Morse Boundary, in other words, we will show that if $g_1$ and $g_2$ are two Morse gradient rays, then there must exist some $C \geq 0$ such that $d(g_1(t), g_2(t))<C$ for all $t \in [0, \infty).$ We will also give an example demonstrating that the above statement is not true if we don't restrict our attention to Morse geodesics.

\begin{definition}[thin polygons]
An $n$-polygon is said to be $\delta$-thin if the $\delta$-neighborhood of the of any $n-1$ sides of it contain the other $n$-th side.
\end{definition}

\begin{lemma}[thin Morse-rectangles]\label{thin polygons}
Let $X$ be a proper geodesic metric space and let $[x,z_1]$, and $[y,z_2]$ be $N$-Morse geodeiscs, if $[z_1, z_2]$ is a geodesic connecting $z_1$ to $z_2$, then the polygon $[x,y] \cup [y,z_2] \cup [z_2, z_1] \cup [z_1, x]$ is $\delta$-thin where $\delta$ depends only on $N$ and on $d(x,y).$

\end{lemma}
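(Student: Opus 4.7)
The plan is to triangulate the polygon by adding a diagonal and to exploit both Morse sides via the following elementary observation: if $[u,v]$ is a geodesic of length at most $D := d(x,y)$, then for any geodesic $[v,w]$, the concatenation $[u,v] \cup [v,w]$ is a $(1, 2D)$-quasi-geodesic from $u$ to $w$ (a routine triangle-inequality check on sub-paths). When such a concatenation has the same endpoints as an $N$-Morse geodesic, the Morse property controls its Hausdorff distance to that geodesic.

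I would first add the diagonal $[y, z_1]$, splitting the polygon into the triangles $\Delta_a := \triangle xyz_1$ and $\Delta_b := \triangle yz_1 z_2$. The concatenation $\eta_a := [x, y] \cup [y, z_1]$ is a $(1, 2D)$-quasi-geodesic from $x$ to $z_1$ sharing endpoints with the $N$-Morse geodesic $[x, z_1]$; by the Morse property, $\eta_a$ lies within $N(1, 2D)$ of $[x, z_1]$. A short companion argument (parameterizing and using continuity of the projection) yields the reverse Hausdorff bound on $[x,z_1]$ in terms of $\eta_a$. This shows $\Delta_a$ is $\delta_a$-slim, where $\delta_a$ depends only on $N$ and $D$; in particular, the diagonal $[y, z_1]$ lies within $\delta_a$ of the polygon side $[x, z_1]$.

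For $\Delta_b$, whose $N$-Morse side is $[y, z_2]$, the analogous quasi-geodesic trick does not apply directly because neither of the remaining two sides is short. However, one can use the output of the previous step (that $[y, z_1]$ is Hausdorff-close to $[x, z_1]$) to construct an auxiliary path from $y$ to $z_2$ by concatenating the short segment $[y, x]$, an appropriate sub-segment of the Morse geodesic $[x, z_1]$, and the geodesic $[z_1, z_2]$. With some bookkeeping using the closeness between $[y, z_1]$ and $[x, z_1]$, this auxiliary path can be shown to be a quasi-geodesic with constants depending only on $N$ and $D$; the Morse property of $[y, z_2]$ then gives $\delta_b$-slimness of $\Delta_b$ with $\delta_b = \delta_b(N, D)$.

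To conclude, any point on a side of the polygon lies, by slimness of the containing sub-triangle, within $\max(\delta_a, \delta_b)$ of either another polygon side or of the diagonal $[y, z_1]$. Since the diagonal itself is within $\delta_a$ of the polygon side $[x, z_1]$, this gives a uniform thinness bound $\delta \leq \delta_a + \delta_b$ depending only on $N$ and $d(x, y)$. The principal obstacle is the analysis of $\Delta_b$: absent a short side, the quasi-geodesic trick does not apply directly, and one must combine the Morse property of $[y, z_2]$ with the closeness between $[y, z_1]$ and $[x, z_1]$ in a careful way.
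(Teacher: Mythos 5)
Your handling of $\Delta_a$ is correct: $[x,y]\cup[y,z_1]$ is a $(1,2D)$-quasi-geodesic with endpoints on the $N$-Morse geodesic $[x,z_1]$, so the Morse property plus the standard connectedness argument gives a Hausdorff bound between $[y,z_1]$ and $[x,z_1]$ depending only on $N$ and $D$. The gap is in $\Delta_b$. The auxiliary path $[y,x]\cup[x,z_1]\cup[z_1,z_2]$ is in general \emph{not} a quasi-geodesic with constants depending only on $N$ and $D$, and no amount of bookkeeping with the closeness of $[y,z_1]$ to $[x,z_1]$ can make it one: nothing prevents $[z_1,z_2]$ from backtracking along $[x,z_1]$. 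Concretely, work in $\mathbb{H}^2$ (where every geodesic is $N$-Morse for a uniform $N$), put $y$ at distance $D$ from $x$, take $z_1$ very far away and $z_2$ within distance $1$ of $x$. All hypotheses of the lemma hold, yet your path has length about $2\,d(x,z_1)$ while its endpoints $y$ and $z_2$ are at distance at most $D+1$; no $(\lambda,\epsilon)$ depending only on $N$ and $D$ accommodates this. Consequently the Morse property of $[y,z_2]$ cannot be applied to that path, and the slimness of $\Delta_b$ is not established. Note also that the naive alternative, applying the Morse property of $[y,z_2]$ to $[y,z_1]\cup[z_1,z_2]$, fails for the same backtracking reason, so $\Delta_b$ genuinely cannot be handled by a single quasi-geodesic trick.

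The missing ingredient is the following pair of standard facts, which is essentially what the paper's one-line proof (citing Lemma 2.8 of \cite{Cordes2017}) packages together. First, since the endpoints of the diagonal $[y,z_1]$ lie within $D$ of the endpoints of the $N$-Morse geodesic $[x,z_1]$ (equivalently, by your $\Delta_a$ step, the two geodesics are at bounded Hausdorff distance), the diagonal $[y,z_1]$ is itself $N'$-Morse with $N'$ depending only on $N$ and $D$. Second, a geodesic triangle with \emph{two} Morse sides is uniformly thin: with $M=\max\{N,N'\}$ the triangle $\Delta_b$, whose sides $[y,z_1]$ and $[y,z_2]$ are both $M$-Morse, is $4M(3,0)$-thin by Lemma 2.2 of \cite{Cordes2017} --- the same lemma this paper invokes in the proofs of Lemma~\ref{unique point} and Lemma~\ref{busemann is convex}. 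That two-Morse-sides statement is itself not a one-line quasi-geodesic argument and must be imported or proved separately. With this replacement for your $\Delta_b$ step, your decomposition into $\Delta_a$ and $\Delta_b$ and your final assembly (adding $\delta_a$ to absorb the diagonal into the side $[x,z_1]$) go through and yield $\delta=\delta(N,D)$ as claimed.
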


\begin{proof}
The proof follows easily from Lemma 2.8 in \cite{Cordes2017}.
\end{proof}
The following key lemma states that if $\alpha$ and $\beta$ are two $h$-gradient rays that are Morse, then $d(\alpha(t), \beta(t)) \leq C$ for all $t \in [0, \infty)$. It makes it possible to give a well-defined map from the space of distance-like function to the Morse boundary
\begin{lemma}[Morse-gradient rays fellow travel]\label{unique point}
Let $l \in \overline{H}$, and let $ \alpha, \beta$ be two Morse gradient rays associated to $l$, then $\exists C>0$ such that $d(\alpha(t), \beta(t)) \leq C$ for all $t>0.$
\end{lemma}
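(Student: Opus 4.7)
The strategy is to combine the thin Morse-rectangles lemma (Lemma \ref{thin polygons}) with Morse convexity to squeeze the midpoint of the geodesic $[\alpha(T),\beta(T)]$ toward one of the two corners $\alpha(T),\beta(T)$ of a rectangle whose long sides are the gradient rays themselves.

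\emph{Set-up.} By replacing $\alpha$ with a sub-ray (again a Morse $h$-gradient ray), I may assume $h(\alpha(0))=h(\beta(0))=:a$, since $h(\alpha(0))\neq h(\beta(0))$ shifts $d(\alpha(t),\beta(t))$ by at most an additive constant. Enlarging the Morse gauge if necessary, I fix a common gauge $N$ such that $\alpha,\beta$ are both $N$-Morse and $h$ is $N$-Morse convex, and let $K$ be the convexity constant attached to the pair $(\alpha,\beta)$ as in Definition \ref{Morse-Convex}. Write $D(t):=d(\alpha(t),\beta(t))$ and $D_0:=D(0)$.

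\emph{The rectangle and the midpoint.} For each $T>0$, form the geodesic rectangle with sides $[\alpha(0),\beta(0)]$, $\beta|_{[0,T]}$, $[\beta(T),\alpha(T)]$, $\alpha|_{[0,T]}$. Since the two long sides are $N$-Morse, Lemma \ref{thin polygons} makes the rectangle $\delta$-thin with $\delta=\delta(N,D_0)$ \emph{independent of $T$}. Let $m_T$ be the midpoint of $[\alpha(T),\beta(T)]$, so $d(m_T,\alpha(T))=d(m_T,\beta(T))=D(T)/2$. Applying $N$-Morse convexity to $\alpha(T)$ and $\beta(T)$ at parameter $\tfrac12$ gives $h(m_T)\leq a-T+K$, and the $1$-Lipschitz property of $h$ gives $h(m_T)\geq a-T-D(T)/2$.

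\emph{Excluding the opposite side and concluding.} By $\delta$-thinness, $m_T$ lies within $\delta$ of one of the three other sides. If $m_T$ is $\delta$-close to some $q\in[\alpha(0),\beta(0)]$, then $d(m_T,\alpha(0))\leq \delta+D_0$, whereas the Lipschitz bound combined with convexity forces $d(m_T,\alpha(0))\geq h(\alpha(0))-h(m_T)\geq T-K$, so $T\leq K+\delta+D_0$. Consequently, for $T>K+\delta+D_0$, the point $m_T$ must be $\delta$-close to a point of $\alpha|_{[0,T]}$ or (symmetrically) $\beta|_{[0,T]}$. Say $d(m_T,\alpha(s))\leq \delta$; then $|h(m_T)-(a-s)|\leq \delta$, which together with $h(m_T)\leq a-T+K$ yields $s\geq T-K-\delta$, whence
\[
\tfrac{1}{2}D(T)=d(m_T,\alpha(T))\leq d(m_T,\alpha(s))+d(\alpha(s),\alpha(T))\leq 2\delta+K.
\]
Thus $D(T)\leq 4\delta+2K$ as soon as $T>K+\delta+D_0$, and for smaller $T$ the crude bound $D(T)\leq D_0+2T$ is already a uniform constant. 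Taking $C$ to be the maximum of the two bounds finishes the argument.

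\emph{Main obstacle.} The delicate part is the bookkeeping in the set-up: ensuring that the Morse gauges of $\alpha,\beta$ and the convexity gauge of $h$ can be unified into a single $N$ for which Lemma \ref{thin polygons} and Morse convexity apply simultaneously with constants uniform in $T$. Once that is arranged, the interplay is transparent: convexity drives the midpoint down in $h$, while thin-rectangles forces it geometrically onto one of the long Morse sides, where its low $h$-value pins it near the far corner.
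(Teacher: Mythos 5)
Your proof is correct and follows essentially the same route as the paper's: Morse convexity pushes the midpoint of $[\alpha(T),\beta(T)]$ down in $h$-value, while thinness of the Morse rectangle forces it near one of the gradient rays, pinning it close to the endpoint. The only difference is cosmetic --- you handle the general case directly via the rectangle (the paper first treats equal basepoints, then the rectangle), and your bookkeeping of the factor of $2$ in $D(T)=2\,d(m_T,\alpha(T))$ is actually cleaner than the paper's.
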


Before we give the proof, we give a non-example for the case where $\alpha$ and $\beta$ are not Morse. Let $X$ be the Cayley graph of $\mathbb{Z \oplus Z}$ with the standard generators. Consider the two geodesic rays $\alpha$ and $\beta$ in $X$ given in Figure 4. If we take $h=b_{\alpha}$, the Busemann function of $\alpha$, then $\overline{h}$ is in $\overline{H}$ and both $\alpha$ and $\beta$ are $h$-gradient rays. However, $d(\alpha(t), \beta(t))$ is unbounded.

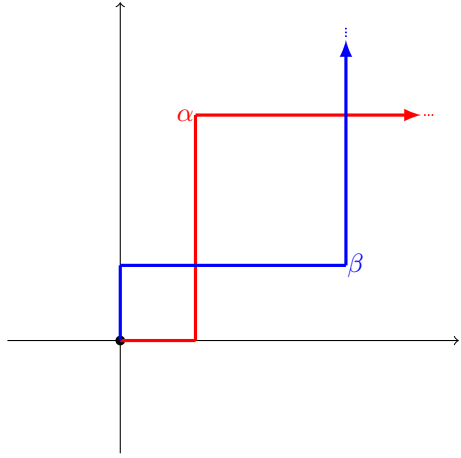
\begin{figure}
    \centering

    \label{fig:nonexample}
    \begin{tikzpicture}[scale=.5]
\draw[thin,->] (-3,0) -- (9,0);
\draw[thin,->] (0,-3) -- (0,9);

\draw[thick,fill=black] (0,0) circle (0.1cm);

\draw[ very thick,red] (0,0) -- ++(2,0);

\draw[ very thick,blue] (0,0) -- ++(0,2);

\draw[ very thick,red] (2,0) -- ++(0,6) node[left] {$\alpha$};

\draw[ very thick,blue] (0,2) -- ++(6,0) node[right] {$\beta$};

\draw[ -latex, very thick,red] (2,6) -- ++(6,0);

\draw [blue] (6,8.1) circle (.01cm); 
\draw [blue] (6,8.2) circle (.01cm);
\draw [blue] (6,8.3) circle (.01cm);

\draw [red] (8.1,6) circle (.01cm); 
\draw [red] (8.2,6) circle (.01cm);
\draw [red] (8.3,6) circle (.01cm);

\draw[-latex,  very thick,blue] (6,2) -- ++(0,6);

\end{tikzpicture}
\caption{The Busemann function of $\alpha$ given by $h=b_{\alpha}$ is in $H,$ and both $\alpha$ and $\beta$ are $h$-gradient rays but $d(\alpha(t), \beta(t))$ is unbounded}

\end{figure}

\begin{proof}
Let $N:=max\{M, M'\}$ where $M$ and $M'$ are the Morse gauges for $\alpha$ and $\beta$ respectively. First we treat the case where $\alpha$ and $\beta$ start at the same point. Let $x:=\alpha(0)=\beta(0)$ and let $\gamma$ be a geodesic connecting $\alpha(t)$ to $\beta(t)$ for some $t \geq 0$. Let $h \in l$ be normalized so that $h(x)=0$. Notice that by assumption, since $h$ is Morse convex, there must exist some $K$ such that $h(\gamma(t))$ is $K$-convex. Let and $m$ is the middle point of $\gamma$, then we have $h(m) \leq \frac{1}{2}h(\alpha(t))+ \frac{1}{2}h(\beta(t))+K=\frac{-t}{2}-\frac{t}{2}+K=-t + K.$ This implies that $d(x,\alpha(t))=t \leq -h(m)+K=h(x) - h(m) \leq d(x,m) +K$. Since $\alpha$ and $\beta$ are both $N$-Morse, then by Lemma 2.2 in \cite{Cordes2017}, the triangle $[x, \alpha(t)] \cup \gamma \cup  [x, \beta(t)]$ is $4N(3,0)$ thin. Therefore, there must exist some point $w \in [x,\alpha(t)] \cup [x, \beta(t)]$ such that $d(m,w) \leq 4N(3,0))$, without loss of generality, assume that $w \in [x, \alpha(t)]$. Notice that since $m$ is the midpoint of $\gamma,$ we must have $d(\alpha(t), \beta(t))=2d(\alpha(t),m) \leq 2(4N(3,0)+d(\alpha(t), w))$. Thus, in order to put a bound on $d(\alpha(t), \beta(t))$, it is enough to put a bound on $d( \alpha(t), w)$. Notice that we have the following
\\
$d(x,w)+d(w, \alpha(t))=d(x, \alpha(t)) \leq d(x,m)+ K \leq d(x,w) +d(w,m) + K \leq d(x,w)+4N(3,0) + K$. Therefore, $d(w, \alpha(t)) \leq 4N(3,0) + K$. This gives that
$d(\alpha(t), \beta(t))=2d(\alpha(t),m) \leq 2(4N(3,0)+d(\alpha(t),w)) \leq 2(4N(3,0)+ 4N(3,0)+K)$

And thus, letting $C=2(4N(3,0)+d(\alpha(t),w)) \leq 2(4N(3,0)+ 4N(3,0)+K$ yields that $d(\alpha(t), \beta(t)) \leq C$ for all $t \in [0, \infty).$

Now we show it more generally, in other words, now we don't assume that $\alpha$ and $\beta$, start at the same point (see Figure 5)). Up to replacing $\alpha$ by some $\alpha'$ defined by $\alpha'(t)=\alpha(a+t)$ for some $a \geq 0$, we may choose a representative $h$ so that $h(\alpha(0))=h(\beta(0))=0$. Let $t>0$ and let $\gamma$ be a geodesic connecting $\alpha(t)$ to $\beta(t)$. Notice that by Lemma \ref{thin polygons} the polygon $[\alpha(0), \beta(0)] \cup [\beta(0), \beta(t)] \cup [\beta(t), \alpha(t)] \cup [\alpha(0), \alpha(t)] $ must be $\delta$-thin where $\delta$ depends only on $d(\alpha(0), \beta(0))$ and on $N$. This implies that the middle point of $\gamma$, denoted by $m$, is $\delta$-close to $[\alpha(0), \beta(0)] \cup [\beta(0), \beta(t)] \cup [\alpha(0), \alpha(t)]$. We claim that if $t$ is large enough, then $m$ can't be $\delta$-close to  $[\alpha(0), \beta(0)]$. First notice that since $h$ is Morse convex, we must have $h(m) \leq \frac{-t}{2}+\frac{-t}{2}+K=-t+K$. This gives that $d(\alpha(0), \alpha(t))=t \leq -h(m)+K=h(\alpha(0))-h(m)+K \leq d(\alpha(0),m)+K$. Suppose for the sake of contradiction that there exist $w \in [\alpha(0), \beta(0)]$ with $d(w,m) \leq \delta$. Let $m'$ be the middle point of $[\alpha(0), \beta(0)]$, without loss of generality, we may assume that $w$ lies between $\alpha(0)$ and $m'$. But this yields: $d(\alpha(0), \beta(0))=2d(\alpha(0), m') \geq 2d(\alpha(0), w) \geq 2(d(\alpha(0), m)-d(m,w)) \geq 2(d(\alpha(0), \alpha(t))-K-\delta) \geq 2t-2K-2\delta$ which is a contradiction since $t$ is arbitrary. This implies the existence of $w \in [\alpha(0), \alpha(t)] \cup [\beta(0), \beta(t)],$ with $d(w,m) \leq \delta$, without loss of generality, assume that $w \in [\alpha(0), \alpha(t)].$ Again, notice that $d( \alpha(t), \beta(t))=2d(\alpha(t), m) \leq d(\alpha(t), w)+d(w,m) \leq d(\alpha(t), w)+ \delta$. Therefore, in order to bound $d(\alpha(t), \beta(t)$, we only need to bound $d(\alpha(t), w)$. But since $d(\alpha(t), w)+d(w, \alpha(0))=d(\alpha(0), \alpha(t)) \leq d(m, \alpha(0)+K \leq d(m,w)+d(w, \alpha(0))+K \leq \delta+d(w, \alpha(0))+K$. This implies that $d(\alpha(t), w) \leq K+ \delta$ which in turns gives us that  $d( \alpha(t), \beta(t))=2d(\alpha(t), m) \leq d(\alpha(t), w)+d(w,m) \leq d(\alpha(t), w)+ \delta \leq K+2\delta$ for all $t$ large enough.

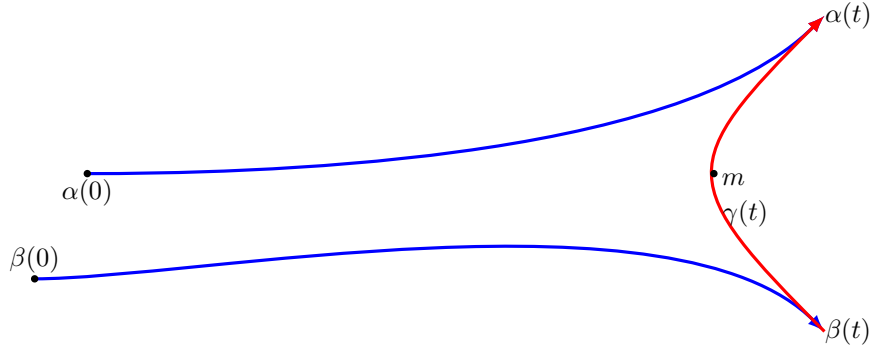
\begin{figure}
    \centering
    
    \label{Main}

\begin{tikzpicture}[scale=.7]

\draw[very thick,blue, latex-] (12,8) .. controls ++({-4*cos(45)},{-4*sin(45)}) and ++(3,0) ..(-2,5);

\draw[very thick,blue, latex-] (12,2) .. controls ++({-4*cos(45)},{4*sin(45)}) and ++(3,0) ..(-3,3);

\draw[very thick,red, latex-] (12,8) .. controls ++({-4*cos(45)},{-4*sin(45)}) and ++({-4*cos(45)},{4*sin(45)})  ..(12,2);
\node[below] at (10.5,4.5) {$\gamma(t)$};

\node[left] at (10.5,4.9) {$m$};
\draw[thick,fill=black] (9.9,5) circle (.05cm);

\draw[thick,fill=black] (-2,5) circle (.05cm);
\node[below] at (-2,4.9) {$\alpha(0)$};

\draw[thick,fill=black] (-3,3) circle (.05cm);
\node[above] at (-3,3.1) {$\beta(0)$};

\node[right] at (12,8) {$\alpha(t)$};

\node[right] at (12,2) {$\beta(t)$};

\end{tikzpicture}

\caption{Morse gradient rays are convergent}

\end{figure}
\end{proof}

We remark that the previous Lemma does not imply that if $h$ is Morse convex then \emph{all} of it's gradient rays converge to the same point. The Lemma only says the Morse gradient rays of $h$ must converge. A natural question to ask at the point would be

\begin{question}
If $h$ is Morse convex and $h$ has some Morse gradient ray. Are all other $h$-gradient rays Morse?
\end{question}

The next few Lemmas aim to show that the space of Morse convex functions $H$ is big enough to surject on the Morse boundary. This is done by showing that for any Morse ray $c$ starting at $p$, we have $b_c \in H^{N}_{p} \subseteq H_{\star}$

\begin{lemma}\label{busemann is convex}
Let $c$ be an $N$-Morse ray starting at $p$, then the Busemann function $b_{c} \in H_{p}^{N}\subseteq H_{\star}.$ 
\end{lemma}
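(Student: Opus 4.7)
The lemma has two content parts: (A) the ray $c$ itself is an $N$-Morse $b_c$-gradient ray starting at $p$, and (B) $b_c$ is $N$-Morse convex. Part (A) is a direct computation: since $b_c(c(t)) = \lim_{s \to \infty}[d(c(t), c(s)) - s] = \lim_{s \to \infty}[(s - t) - s] = -t$, one has $b_c(c(t)) - b_c(c(s)) = s - t$ for $0 \leq s \leq t$, which matches Definition \ref{def: gradient arc}; combined with the hypothesis that $c$ is $N$-Morse and starts at $p$, this furnishes the required Morse gradient ray.

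The heart of the argument is Part (B). Fix $N$-Morse rays $\alpha, \beta$, points $x_0 \in \text{im}(\alpha)$, $x_1 \in \text{im}(\beta)$, and $x_t$ on $[x_0, x_1]$ with $d(x_0, x_t) = t\, d(x_0, x_1)$. Writing $b_c$ as a limit of distance functions and passing to the limit, the target inequality $b_c(x_t) \leq (1-t) b_c(x_0) + t b_c(x_1) + K$ reduces to producing $K$ independent of $s, x_0, x_1, t$ for which
$$d(c(s), x_t) \leq (1-t)\, d(c(s), x_0) + t\, d(c(s), x_1) + K$$
holds for all sufficiently large $s$. By Lemmas \ref{equivalence} and \ref{convex}, this follows once the geodesic triangle $\Delta_s := [c(s), x_0, x_1]$ is shown to be $\delta'$-thin (slim condition 1) uniformly in $s, x_0, x_1$, with the promised $K = 2\delta'$. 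To establish this uniform thinness I would invoke Lemma \ref{thin polygons} three times: once for the quadrilateral with vertices $\alpha(0), p, c(s), x_0$ and Morse sides on subrays of $\alpha$ and $c$ (thin constant $\delta_1 = \delta_1(N, d(\alpha(0), p))$); once for $\beta(0), p, c(s), x_1$, with Morse sides on $\beta, c$ (thin constant $\delta_2$); and once for $\alpha(0), \beta(0), x_1, x_0$, with Morse sides on $\alpha, \beta$ (thin constant $\delta_3$). A point $w$ on any side of $\Delta_s$ can then be chased through these three thin quadrilaterals: each step places $w$ within a bounded distance either of one of the fixed ``connector'' segments $[\alpha(0), p], [\beta(0), p], [\alpha(0), \beta(0)]$, whose diameters do not depend on $s, x_0, x_1$, or of another side of $\Delta_s$.

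The main obstacle is this case-chase. Delicate sub-cases arise when $w$ is pushed onto one Morse ray and then onto another, which can happen whenever two of $\alpha, \beta, c$ converge to the same boundary point or fellow-travel for a long time; to keep the resulting constants independent of $x_0, x_1$, one appeals to the fact that the mutual proximity region of two distinct $N$-Morse rays is controlled by $\alpha, \beta, c$ and $N$ alone, as in the stability estimates of \cite{Cordes2017}. Once the uniform $\delta'$-thinness of $\Delta_s$ is in hand, Lemma \ref{convex} supplies $K = 2\delta'$, and letting $s \to \infty$ yields the Morse convexity of $b_c$ for any $N$-Morse pair $\alpha, \beta$, finishing Part (B) and placing $b_c$ in $H^N_p$.
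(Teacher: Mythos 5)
Your Part (A) and your overall frame --- write $b_c$ as a limit of the functions $d(\cdot,c(s))-s$, prove a uniform convexity inequality for $d(c(s),\cdot)$ on the triangle $\Delta_s=[c(s),x_0]\cup[x_0,x_1]\cup[x_1,c(s)]$ via Lemmas \ref{equivalence} and \ref{convex}, and let $s\to\infty$ --- coincide with the paper's proof. The divergence, and the gap, is in how you establish that $\Delta_s$ is thin with a constant independent of $s$, $x_0$, $x_1$. Your three-quadrilateral case-chase is not carried out, and as sketched it does not close up: a point $w\in[x_0,c(s)]$ is first placed near $\alpha\cup[\alpha(0),p]\cup c$; if it lands near $c$, the second quadrilateral may push it onto $\beta$ or onto the connector $[\beta(0),p]$ rather than onto a side of $\Delta_s$; and being within bounded distance of a fixed connector segment does \emph{not} by itself place $w$ within bounded distance of $[x_0,x_1]\cup[x_1,c(s)]$ when $x_0$ is far out on $\alpha$ (one must separately argue that in that regime $[x_0,x_1]$ itself passes near the basepoints). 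You acknowledge these ``delicate sub-cases'' and defer them to unspecified ``stability estimates,'' but they are precisely the content of the lemma; without resolving them, the uniform $\delta'$ is not established.

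The paper avoids the chase entirely by making the two sides of $\Delta_s$ incident to $c(s)$ uniformly Morse, after which thinness is a one-line citation. Concretely: by the corollary to Lemma 2.8 of \cite{Cordes2017}, any geodesic from $p_1=\alpha(0)$ to a point of $c$ is $N'$-Morse with $N'$ depending only on $N$ and $d(p,p_1)$; applying Lemma 2.3 of \cite{Cordes2017} to the triangle $p_1,x_0,c(s)$ (whose sides $[p_1,x_0]\subseteq\alpha$ and $[p_1,c(s)]$ are now both Morse with controlled gauge) shows $[x_0,c(s)]$ is $N''$-Morse with $N''$ depending only on $N$ and $d(p,p_1)$ --- in particular not on $x_0$ or $s$. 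The same holds for $[x_1,c(s)]$, and then Lemma 2.2 of \cite{Cordes2017} gives that $\Delta_s$ is $4M(3,0)$-thin for $M=\max\{N'',M'\}$. If you want to salvage your argument, replace the case-chase by this step; otherwise you must actually perform the chase, including the sub-case where the chased point lands on a connector segment, and verify termination with a constant depending only on $N$ and the pairwise distances among $p$, $\alpha(0)$, $\beta(0)$. One further small point: Lemma \ref{convex} yields $2\delta'$ for slim condition 2, so after converting from slim condition 1 via Lemma \ref{equivalence} the final constant is not $2\delta'$ with $\delta'=4M(3,0)$ but the larger $32M(3,0)$ appearing in the paper; this does not affect correctness but your bookkeeping should reflect the conversion.
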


Before we start the proof, we give an easy corollary of Lemma 2.8 in \cite{Cordes2017}.
\begin{corollary}
Let $\alpha$ be some $N$-Morse geodesic starting at $p$ and let $p'$ be any other point in the space $X$. Then, there exist some $N' \geq N$ such that any geodesic starting at $p'$ and ending at $\alpha$ is $N'$-Morse, where $N'$ depends only on $N$ and on $d(p,p').$
\end{corollary}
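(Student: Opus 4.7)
The plan is to extend $\beta$ to a quasi-geodesic with both endpoints on $\alpha$ and then invoke the Morse-stability content of Lemma 2.8 of \cite{Cordes2017}. Let $\beta:[0,L]\to X$ be any geodesic from $p'$ to a point $q\in\mathrm{Im}(\alpha)$, write $D:=d(p,p')$, and form the concatenation $\tilde\beta:=[p,p']\cdot\beta$, which runs from $p$ to $q$.

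First I would verify that $\tilde\beta$ is a $(1,2D)$-quasi-geodesic: the upper bound $d(\tilde\beta(s),\tilde\beta(t))\le |s-t|$ is immediate from concatenation of geodesics, while the lower bound $d(\tilde\beta(s),\tilde\beta(t))\ge |s-t|-2D$ only needs to be checked when $\tilde\beta(s)\in[p,p']$ and $\tilde\beta(t)\in\beta$, where it follows from $\mathrm{diam}([p,p'])=D$ and the triangle inequality. Because the endpoints $p$ and $q$ of $\tilde\beta$ both lie on the $N$-Morse geodesic $\alpha$, Lemma 2.8 of \cite{Cordes2017} supplies a Morse gauge $N'=N'(N,D)$ with respect to which $\tilde\beta$ is itself $N'$-Morse. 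Given any $(\lambda,\epsilon)$-quasi-geodesic $\sigma$ with endpoints on $\beta$, those endpoints also lie on $\tilde\beta$, so $N'$-Morseness of $\tilde\beta$ yields $\sigma\subseteq\mathcal{N}_{N'(\lambda,\epsilon)}(\tilde\beta)$. Every point of $\tilde\beta$ either lies on $\beta$ or on $[p,p']$, and in the latter case it is within $D$ of $p'\in\beta$; hence $\sigma\subseteq\mathcal{N}_{N'(\lambda,\epsilon)+D}(\beta)$. Setting $N''(\lambda,\epsilon):=N'(\lambda,\epsilon)+D$ exhibits $\beta$ as $N''$-Morse, with $N''$ depending only on $N$ and $d(p,p')$ as required.

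The main obstacle is invoking Cordes's Lemma 2.8 in the correct form: I need not merely the weak statement that $\tilde\beta$ lies in $\mathcal{N}_{N(1,2D)}(\alpha)$ (the definition of Morse applied to the extension), but the stronger conclusion that $\tilde\beta$ itself inherits a Morse gauge from $\alpha$. That upgraded stability statement is the actual content of Lemma 2.8, and without it the extension trick would only control the distance from $\sigma$ to $\alpha$ rather than the desired distance from $\sigma$ to $\beta$.
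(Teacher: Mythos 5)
Your argument is correct, and it does strictly more work than the paper, which offers no proof at all: the corollary is presented there as an immediate consequence of Lemma 2.8 of \cite{Cordes2017}, a lemma whose content is essentially this very statement (geodesics from a fixed point to points on an $N$-Morse ray are uniformly $N'$-Morse, with $N'$ depending only on $N$ and the distance between basepoints). Your route --- extend $\beta$ by $[p,p']$ to a $(1,2D)$-quasi-geodesic $\tilde\beta$ with endpoints on $\alpha$, transfer the Morse gauge of $\alpha$ to $\tilde\beta$, then pay an extra $D$ to convert neighborhoods of $\tilde\beta$ into neighborhoods of $\beta$ --- is in effect a proof of the cited lemma from more elementary stability facts; the $(1,2D)$ estimate and the final bookkeeping $N''(\lambda,\epsilon)=N'(\lambda,\epsilon)+D$ are both right. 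Two attributions should be cleaned up, though neither affects validity. First, the input you need in the middle step is the stability statement that a $(\lambda,\epsilon)$-quasi-geodesic sharing endpoints with an $N$-Morse (quasi-)geodesic is $N'(N,\lambda,\epsilon)$-Morse; in Cordes's numbering that is part of Lemma 2.5, not Lemma 2.8, and citing Lemma 2.8 for it is uncomfortably close to assuming the conclusion. Second, $p$ and $q$ are the endpoints of the subsegment of $\alpha$ from $\alpha(0)$ to $q$ rather than of $\alpha$ itself, so you should either invoke the standard fact that subsegments of $N$-Morse geodesics are Morse with uniformly controlled gauge, or use the version of the stability lemma stated for quasi-geodesics whose endpoints merely lie on the Morse geodesic. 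With those references fixed (and replacing $N''$ by $\max\{N'',N\}$ to meet the cosmetic requirement $N'\geq N$), your proof stands as a self-contained justification of a step the paper leaves entirely to the citation.
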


Now we prove Lemma \ref{busemann is convex}

\begin{proof}
Let $c$ be an $N$-Morse ray starting at some point $p$ and set $h=b_c$. The first thing to show is that $h$ has an $N$-Morse ray starting at $p$, but $c$ clearly does the job. Now, given any two $N$-Morse rays $g_1,$ $g_2$, starting at $p_1$ and $p_2$ respectively, we want to show that there exists some $K>0$ such that if $[x_1,x_2]$ is a geodesic connecting some $x_1 \in im(g_1)$ to $x_1\in im(g_2)$, then for any $s \in [0,1]$, if $x_s$ satisfies $d(x_1,x_s)=sd(x_1,x_2)$, then $h(x_s) \leq (1-s)h(x_1)+sh(x_2)+K.$ Let $t \in [0, \infty)$, we claim that any geodesics $[x_1,c(t)]$, $[x_2,c(t)]$ must be $M$-Morse where $M$ depends only on $N$ and on the distances between the base points $\{p, p_1, p_2\}$. Consider some geodesic triangle with vertices $c(t), p, p_1$. See Figure 6. By the previous corollary, any geodesic $[p_1,c(t)]$ must be $N'$-Morse where $N'$ depends only on $N$ and on $d(p, p_1)$. Now, applying Lemma 2.3 in \cite{Cordes2017}, to a geodesic triangle with vertices $p_1,\,x_1$ and $c(t)$ yields that $[x_1, c(t)]$ must be $N''$-Morse where $N''$ depends only on $N$ and on $d(p,p_1).$ Similarly, we get that a geodesic $[x_2, c(t)]$ is $M'$-Morse, where $M'$ depends only on $N$ and $d(p,p_2)$. Therefore, if $M:=max\{N'',M'\},$ then the geodesics $[x_1, c(t)]$, $[x_2,c(t)]$ are both $M$-Morse. By Lemma 2.2 in \cite{Cordes2017}, we get that the triangle $\Delta$ given by $[x_1, c(t)] \cup [x_2, c(t)] \cup [x_1, x_2]$ is $4M(3,0)$ thin, but then by Lemma \ref{equivalence}, and \ref{convex}, we get that 
$$d(c(t),x_s) \leq (1-s)d(c(t),x_1)+sd(c(t),x_2)+32M(3,0)$$

But then, if we substract $t$ from both sides we get that
$$d(c(t),x_s)-t \leq (1-s)d(c(t),x_1)+sd(c(t),x_2)+32M(3,0)-t$$ or 

$$d(c(t),x_s)-t \leq (1-s)(d(c(t),x_1)-t)+s(d(c(t),x_2)-t)+32M(3,0)$$

Now, if we let $ t\rightarrow \infty$, we get that $b_c(x_s) \leq (1-s)b_c(x_1)+sb_c(x_2)+32M(3,0)$

\begin{figure}
    \centering

\begin{tikzpicture}[scale=.7]

\draw[very thick,blue, latex-] (12,8) .. controls ++({-4*cos(45)},{-4*sin(45)}) and ++(3,0) ..(-2,5);

\draw[very thick,black, -latex] (-4,4)-- ++(17,1);

\node[left] at (-4,4) {$p$};
\draw[thick,fill=black] (-4,4) circle (.05cm);

\draw[very thick,blue, latex-] (12,2) .. controls ++({-4*cos(45)},{4*sin(45)}) and ++(3,0) ..(-3,3);

\draw[very thick,red, latex-] (8,6) .. controls ++({-2*cos(45)},{-2*sin(45)}) and ++({-2*cos(45)},{2*sin(45)})  ..(8,3.5);
\node[above] at (8,6) {$x_2$};
\draw[thick,fill=black] (8,6) circle (.05cm);

\node[above] at (8,3.5) {$x_1$};
\draw[thick,fill=black] (8,3.5) circle (.05cm);

\draw[thick,fill=black] (-2,5) circle (.05cm);
\node[below] at (-2,5) {$p_2$};

\draw[thick,fill=black] (-3,3) circle (.05cm);
\node[above] at (-3,3) {$p_1$};

\node[right] at (12,8) {$g_2(t)$};

\node[right] at (12,5.4) {$c(t)$};

\node[right] at (12,2) {$g_1(t)$};

\end{tikzpicture}

\caption{Busmeann function is Morse convex}
\end{figure}
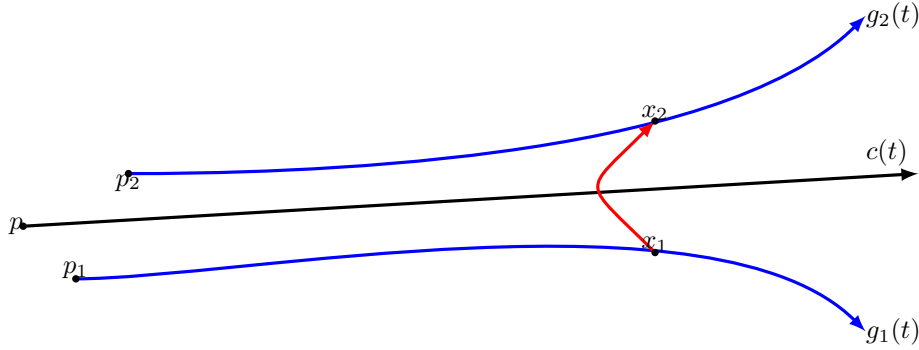

\end{proof}

Now, we define the Morse boundary. For more details, see \cite{Cordes2017}

\begin{definition}\label{The Morse Boundary}
let $p \in X$ and let $N$ be a Morse gauge. The $N$-th component of the Morse boundary, denoted by $\partial^{N}X_p$, is defined by

$$\partial^{N}X_p:= \{ [\alpha]\,\,|\,\,\exists \beta \in [\alpha] \text{ that is an } N \text{-Morse ray with } \beta(0)=p \}$$

This set is given the following topology. Fix $p \in X,$ convergence in $\partial^{N}X_p$ is defined as: $x_n \rightarrow x$ as $n \rightarrow \infty$ if there exists $N$-Morse geodesic rays $\alpha_n$ starting at $p$ with $[\alpha_n]=x_n$ such that every subsequene of $\alpha_n$ has a subsequence converging unifromaly on compact sets to some $\alpha$, with $[\alpha]=x.$ Closed sets of $\partial^{N}X_p$ are then defined as: $B \subseteq \partial^{N}X_p$ is closed if and only if whenever $x_n \in B$ with $x_n \rightarrow x,$ we must have $x \in B.$  

$$\partial_{\star}X:=\underrightarrow\lim\partial^{N}X_p$$
where the direct limit is taken over the collection $\mathcal{M}$ of all Morse gauges. We remind the reader that in the direct limit topology a set $U$ is open (or closed) if and only if $U \cap \partial^{N}X_p $ is open (or closed) for all $N \in \mathcal{M}.$
\end{definition}

\begin{theorem}\label{maintheorem}
The natural map $\varphi_{N}:\overline{H^{N}_{p}} \rightarrow \partial^{N}X_p$, is well-defined, continuous, and surjective.  
\end{theorem}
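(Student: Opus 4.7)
My plan is to verify well-definedness, surjectivity, and continuity separately, leveraging the heavy lifting already done in Lemma \ref{unique point} and Lemma \ref{busemann is convex}.

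\textbf{Well-definedness.} The map $\varphi_N$ is defined by sending $[h] \in \overline{H^N_p}$ to the class $[\alpha]$ of an $N$-Morse $h$-gradient ray starting at $p$, which exists by the definition of $H^N_p$. There are two levels of independence to check. First, if $h' = h + c$ for a constant $c$, then the condition $h(g(t)) - h(g(s)) = s - t$ defining gradient arcs is identical for $h'$, so the two functions share the same gradient rays and $[\alpha]$ depends only on $[h]$. Second, if $\alpha_1$ and $\alpha_2$ are two $N$-Morse $h$-gradient rays at $p$, Lemma \ref{unique point} gives a constant $C$ with $d(\alpha_1(t), \alpha_2(t)) \leq C$ for all $t \geq 0$, which is precisely the equivalence identifying them as the same point in $\partial^N X_p$.

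\textbf{Surjectivity.} Given $[\alpha] \in \partial^N X_p$, pick an $N$-Morse representative $\alpha$ with $\alpha(0) = p$ and set $h := b_\alpha$. By Lemma \ref{busemann is convex}, $h \in H^N_p$. A direct computation from the definition of the Busemann function shows $b_\alpha(\alpha(t)) - b_\alpha(\alpha(s)) = s - t$ (each limit telescopes), so $\alpha$ is itself an $h$-gradient ray. Hence $\varphi_N([b_\alpha]) = [\alpha]$.

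\textbf{Continuity.} This is the substantive step. Suppose $[h_n] \to [h]$ in $\overline{H^N_p}$, and normalize representatives so $h_n(p) = h(p) = 0$, so that $h_n \to h$ uniformly on compact subsets of $X$. Pick $N$-Morse $h_n$-gradient rays $\alpha_n$ at $p$ representing $\varphi_N([h_n])$. To verify convergence in the topology of Definition \ref{The Morse Boundary}, I must show every subsequence of $\{\alpha_n\}$ has a further subsequence converging uniformly on compacta to an $N$-Morse representative of $\varphi_N([h])$. Since the $\alpha_n$ are $1$-Lipschitz with common basepoint $p$ in the proper space $X$, Arzel\`a--Ascoli produces such a further subsequence $\alpha_{n_k}$ converging uniformly on compacta to some geodesic ray $\alpha'$ at $p$. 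Two facts remain: that $\alpha'$ is $N$-Morse, which follows from the standard closure property of $N$-Morse rays under uniform-on-compacts limits (the same principle invoked in the proof of Lemma \ref{AllMorse} via Lemma 2.8 of \cite{Cordes2017}); and that $\alpha'$ is an $h$-gradient ray, which follows by fixing $s, t$ and passing to the limit in the identity $h_{n_k}(\alpha_{n_k}(t)) - h_{n_k}(\alpha_{n_k}(s)) = s - t$, using joint uniform-on-compacts convergence of $h_{n_k} \to h$ and $\alpha_{n_k} \to \alpha'$ together with continuity of $h$. Well-definedness then forces $[\alpha'] = \varphi_N([h])$.

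\textbf{Main obstacle.} The delicate point in continuity is preserving the \emph{same} Morse gauge $N$ in the limit, so that $\alpha'$ lies in $\partial^N X_p$ rather than only in the direct limit $\partial_\star X$; this relies on the fact that the $N$-Morse property is inherited by uniform-on-compacts limits with the same gauge. Once that is secured, Lemma \ref{unique point} immediately identifies $[\alpha']$ with $\varphi_N([h])$ and closes the argument.
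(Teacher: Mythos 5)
Your proposal is correct and follows essentially the same route as the paper: well-definedness via Lemma \ref{unique point}, surjectivity via Busemann functions of $N$-Morse rays (Lemma \ref{busemann is convex}), and continuity via Arzel\`a--Ascoli plus passing to the limit in the gradient identity and citing the closure of the $N$-Morse property under uniform-on-compacts limits. The only cosmetic difference is that you spell out the surjectivity and the constant-shift independence explicitly, whereas the paper leaves the former to the preceding lemma and the latter to an earlier remark.
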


\begin{proof}
Since $\overline{H^{N}_{p}}$ is homeomorphic to $H^{N}_{p,p}$, we will work with the later space instead.  We showed in Lemma \ref{unique point} that if $g_1$ and $g_2$ are two Morse gradient rays associated to the same horofunction $\overline{h} \in \overline{H^{N}_{p}} \subseteq \overline{H_{\star}} $, then $g_1$ and $g_2$ must define the same point at infinity, this shows that $\varphi_{N}$ is well defined. To show that $\varphi_{N}$ is continuous, let $h_n, h \in H^{N}_{p,p} $ with $h_n \rightarrow h$. We need to show that $\varphi_{N}(h_n) \rightarrow \varphi_{N} (h).$ Let $g_n \in \varphi_{N}(h_n)$. In other words, $g_n$ is a sequence of $N$-Morse rays starting at $p$. By definition of convergence in $\partial^{N}X_p$, we need to show that any subsequence of $g_n$, say $g_{n_{k}}$ has some subsequence $g_{n_{k_{i}}}$ that converges uniformly on compact sets to some ray $\beta$, where $\beta \in \varphi_{N}(h)$. Notice that in order to show that $\beta \in \varphi_{N}(h)$ we need to show that $\beta$ is an $h$-gradient ray and that $\beta$ is $N$-Morse. By Arzel`a-Ascoli, any subsequence $g_{n_{k}}$ contains a subsequence $g_{n_{k_{i}}}$ converging to some $\beta$ uniformly on compact sets. Now we are left to show two things, first, we need to show that $\beta$ is a gradient ray for $h$ and then we should show that $\beta$ is in fact $N$-Morse. To show that $\beta$ is an $h$-gradient ray, we need to show that for a given $t_1, t_2 \in [0, \infty)$, we must have $h(\beta(t_1))-h(\beta(t_2))=t_2-t_1$. Let $\epsilon>0,$ we will show that $|h(\beta(t_1))-h(\beta(t_2))-(t_2-t_1)|< \epsilon$ giving the desired conclusion. Notice that the two sets $K_1=\{g_{n_{k_{i}}}(t_1)\}$ and $K_2=\{g_{n_{k_{i}}}(t_2)\}$ live on circles of radius $t_1, t_2$ respectively. Since $X$ is proper, then there exist a compact set $K$ with $K_1, K_2 \subseteq K$. Since $h_n \rightarrow h$ uniformly on compact sets, then $h_n\rightarrow h$ uniformly on $K.$ Therefore, there must exist a positive integer $s_1$ such that for each $i \geq s_1$, we have $|h_{n_{k_{i}}}(g_{n_{k_{i}}}(t_1))-h(g_{n_{k_{i}}}(t_1))|< \frac{\epsilon}{4}$ and $|h_{n_{k_{i}}}(g_{n_{k_{i}}}(t_2))-h(g_{n_{k_{i}}}(t_2))|< \frac{\epsilon}{4}$. Also, since $h$ is uniformly continuous and $g_{n_{k_{i}}}(t_1) \rightarrow \beta (t_1)$ and $g_{n_{k_{i}}}(t_2) \rightarrow \beta (t_2)$, there must exist $s_2$ such that if $i \geq s_2$, then both of the quantities $|h(g_{n_{k_{i}}}(t_1))-h(\beta(t_1))|$ and $|h(g_{n_{k_{i}}}(t_2))-h(\beta(t_2))|$ are less than $\frac{\epsilon}{4}$. Now notice that $|[t_2-t_1]-[h(\beta(t_1))-h(\beta(t_2))]|=|[h_{n_{k_{i}}}(g_{n_{k_{i}}}(t_1))-h_{n_{k_{i}}}(g_{n_{k_{i}}}(t_2))]-[h(\beta(t_1))-h(\beta(t_2))]| \leq |[h_{n_{k_{i}}}(g_{n_{k_{i}}}(t_1))-h(g_{n_{k_{i}}}(t_1))]+[h(g_{n_{k_{i}}}(t_2))-h_{n_{k_{i}}}(g_{n_{k_{i}}}(t_2))]+[h(g_{n_{k_{i}}}(t_1))-h(\beta(t_1))]+[h(\beta(t_2))-h(g_{n_{k_{i}}}(t_2))]| < 4\frac{\epsilon}{4}=\epsilon$. This shows that $\beta$ is a gradient ray for $h$. Now we need to show that $\beta$ is $N$-Morse, but this is Lemma 2.10 in \cite{Cordes2017}.
\end{proof}

We remind the reader that $H_\star$ was defined as $H_{\star}= \bigcup_{N \in \mathcal{M}} H^{N}_p$ where $\mathcal{M}$ is the collection of all Morse gauges. If we give $H_{\star}$ the direct limit topology, we get the following corollary

\begin{corollary}
There exists a continuous surjection $\varphi:\overline{H_{\star}} \twoheadrightarrow \partial_{\star}X.$
\end{corollary}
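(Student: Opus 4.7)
The plan is to assemble the maps $\varphi_N: \overline{H^N_p} \to \partial^N X_p$ from Theorem \ref{maintheorem} into a single map $\varphi: \overline{H_\star} \twoheadrightarrow \partial_\star X$ by invoking the universal property of direct limits on both sides. First I would define $\varphi$ pointwise: given $\overline{h} \in \overline{H_\star}$, choose any Morse gauge $N$ for which $\overline{h} \in \overline{H^N_p}$ and set $\varphi(\overline{h}) := \varphi_N(\overline{h})$, viewed as a point of $\partial_\star X$ via the canonical inclusion $\partial^N X_p \hookrightarrow \partial_\star X$. To see this is independent of the choice of $N$, suppose $\overline{h} \in \overline{H^N_p} \cap \overline{H^{N'}_p}$, so $h$ admits both an $N$-Morse and an $N'$-Morse gradient ray starting at $p$; Lemma \ref{unique point} forces these two rays to fellow-travel at bounded distance, hence they determine the same equivalence class in $\partial_\star X$, and $\varphi_N(\overline{h}) = \varphi_{N'}(\overline{h})$ as required.

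Next I would verify continuity using the universal property of the direct limit topology on $\overline{H_\star}$. By Theorem \ref{maintheorem}, each $\varphi_N$ is continuous into $\partial^N X_p$, and the inclusion $\partial^N X_p \hookrightarrow \partial_\star X$ is continuous by the direct limit topology on $\partial_\star X$ (Definition \ref{The Morse Boundary}). Composing gives a continuous map $\overline{H^N_p} \to \partial_\star X$ for every $N$, and these are compatible on overlaps by the well-definedness argument above. The universal property of $\overline{H_\star}$ (which likewise carries the direct limit topology) then produces a unique continuous map $\varphi: \overline{H_\star} \to \partial_\star X$ restricting to each of these. Explicitly, for an open $U \subseteq \partial_\star X$, the intersection $U \cap \partial^N X_p$ is open for every $N$, and then $\varphi^{-1}(U) \cap \overline{H^N_p} = \varphi_N^{-1}(U \cap \partial^N X_p)$ is open in $\overline{H^N_p}$, so $\varphi^{-1}(U)$ is open in $\overline{H_\star}$.

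For surjectivity, I would appeal directly to Lemma \ref{busemann is convex}. Any $\xi \in \partial_\star X$ lies in $\partial^N X_p$ for some Morse gauge $N$, hence is represented by an $N$-Morse geodesic ray $c$ starting at $p$. Lemma \ref{busemann is convex} then gives $b_c \in H^N_p \subseteq H_\star$, and by construction $c$ itself is an $N$-Morse $b_c$-gradient ray starting at $p$. Therefore $\varphi(\overline{b_c}) = \varphi_N(\overline{b_c}) = [c] = \xi$, establishing surjectivity.

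I expect the main subtlety to be purely formal bookkeeping: tracking the two direct limit topologies simultaneously and confirming that the compatibility needed to invoke the universal property is exactly the content of Lemma \ref{unique point}. Once well-definedness across different gauges is secured, continuity and surjectivity reduce immediately to Theorem \ref{maintheorem} and Lemma \ref{busemann is convex} respectively, so the corollary adds no new geometric input beyond a direct-limit argument.
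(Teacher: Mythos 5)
Your proposal is correct and follows exactly the route the paper intends: the paper states this corollary without proof, presenting it as an immediate consequence of giving $H_{\star}=\bigcup_{N}H^{N}_{p}$ the direct limit topology and assembling the maps $\varphi_N$ from Theorem \ref{maintheorem}, with well-definedness across gauges supplied by Lemma \ref{unique point} and surjectivity by Lemma \ref{busemann is convex}. You have simply made explicit the direct-limit bookkeeping that the paper leaves implicit.
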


\begin{remark}
We remark that one can use a very similar argument to the continuity argument given in the above theorem to show that the space $\overline{H^{N}_p}=H_{p,p}^{N}$ is a closed subset of Lip$_{p}(X, \mathbb{R})=\{f \in \text{Lip}(X, \mathbb{R})|\,\,f(p)=0\}$.
\end{remark}

For the case where $X$ is $\delta$-hyperbolic, the analogus space of horofunctions $H$ surjecting on the Gromov boundary is compact, so it is natural to ask whether that is still the case. We in fact prove the following

\begin{corollary} The topological space $\overline{H_{\star}}$ is compact if and only if $X$ is hyperbolic.

\end{corollary}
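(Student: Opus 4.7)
The plan is to prove the two implications separately.

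For the implication ``$X$ hyperbolic $\Rightarrow \overline{H_\star}$ compact'', I would first recall the standard fact that in a $\delta$-hyperbolic proper geodesic space every geodesic is $N_0$-Morse for a single Morse gauge $N_0 = N_0(\delta)$. Consequently every $h \in H_\star$ already lies in $H^{N_0}_p$, so the directed family $\{\overline{H^N_p}\}_N$ stabilizes at $\overline{H^{N_0}_p}$ and the direct limit topology on $\overline{H_\star}$ reduces to the topology of the single stratum $\overline{H^{N_0}_p} \cong H^{N_0}_{p,p}$. The remark immediately preceding this corollary identifies $H^{N_0}_{p,p}$ as a closed subset of $\text{Lip}_p(X,\mathbb{R})$, which is itself compact by Proposition 3.1 of \cite{Maher2015RandomWO}. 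A closed subspace of a compact space is compact, and this yields the desired conclusion.

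For the converse I would argue by contrapositive: assuming $X$ is not hyperbolic, I would exhibit a sequence in $\overline{H_\star}$ with no convergent subsequence. The structural observation is that $\overline{H_\star}$ carries the direct limit topology over the family $\{\overline{H^N_p}\}_N$, with each stratum closed in every larger stratum (this follows from the same Arzel\`a-Ascoli and continuity argument used in the proof of Theorem \ref{maintheorem}). Given a strictly increasing cofinal sequence of Morse gauges $N_1 < N_2 < \cdots$ realizing strict inclusions $\overline{H^{N_{k-1}}_p} \subsetneq \overline{H^{N_k}_p}$, I would pick $\overline{h_k} \in \overline{H^{N_k}_p} \setminus \overline{H^{N_{k-1}}_p}$; for any Morse gauge $M$ the intersection $\{\overline{h_k}\} \cap \overline{H^M_p}$ is finite, so every subset of $\{\overline{h_k}\}$ is closed in $\overline{H_\star}$. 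The induced topology is therefore discrete and no subsequence converges. To realize each $\overline{h_k}$ concretely, one applies Lemma \ref{busemann is convex}: a Morse ray of Morse gauge $N_k$ produces a Busemann function in $H^{N_k}_p$, so it is enough to exhibit Morse rays of genuinely deteriorating gauge.

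The main obstacle is the step ``$X$ is not hyperbolic $\Rightarrow$ $X$ admits Morse rays with arbitrarily bad Morse gauges''. In complete generality this can fail: $X = \mathbb{R}^2$ is non-hyperbolic yet contains no Morse rays, so $H_\star = \emptyset$ is vacuously compact. I therefore expect the corollary to be read with the tacit hypothesis that the Morse boundary is sufficiently rich, and the cleanest route to finish is to combine the continuous surjection $\varphi: \overline{H_\star} \twoheadrightarrow \partial_\star X$ of Theorem \ref{maintheorem} with known characterizations of hyperbolicity via compactness of the Morse boundary: compactness of $\overline{H_\star}$ forces compactness of $\partial_\star X$, which forces a uniform Morse gauge on every Morse ray in $X$, from which a standard thin-triangles argument (via the Busemann construction of Lemma \ref{busemann is convex} applied to all directions in $X$) would deduce $\delta$-hyperbolicity of $X$ itself.
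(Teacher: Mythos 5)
Your final route is essentially the paper's own. For the forward direction the paper invokes the stability lemma of \cite{BH} to obtain a single Morse gauge $N_0$ for all geodesics, so that $\overline{H_{\star}}$ collapses to one stratum and coincides with the horofunction space of \cite{Coornaert2001}, whose compactness it cites (Proposition 3.9 there); your version via the closedness of $H^{N_0}_{p,p}$ inside the compact space $\text{Lip}_{p}(X,\mathbb{R})$ is the same argument carried out by hand and is fine. For the converse the paper does exactly what you settle on in your last sentences: compactness of $\overline{H_{\star}}$ pushes forward under the continuous surjection $\varphi$ to compactness of $\partial_{\star}X$, and hyperbolicity is then deduced by citing Lemma 3.3 of \cite{Murray2015} and Lemma 4.1 of \cite{CordesDur2017}; it does not attempt your direct-limit, discrete-subset construction, which you rightly abandon since producing Morse rays of genuinely deteriorating gauge from mere non-hyperbolicity is exactly the step that fails. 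Your $X=\mathbb{R}^2$ objection is a genuine one and applies to the paper's statement as well: the cited characterizations of hyperbolicity via compactness of the Morse boundary require the boundary to be nonempty, and a flat plane has $H_{\star}=\emptyset$, hence a (vacuously) compact $\overline{H_{\star}}$, without being hyperbolic. So the corollary should indeed be read with a tacit nonemptiness (or richness) hypothesis on the Morse boundary, exactly as you suggest.
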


\begin{proof}
If the space $X$ is $\delta$-hyperbolic, then by the stability Lemma in \cite{BH}, there exists an $N$ such that every geodeisc ray is $N$-Morse, and our definition of $\overline{H_{\star}}$ coincides with the definition of horofunctions given in \cite{Coornaert2001} where they prove such a space is compact ( Proposition 3.9 of \cite{Coornaert2001}). For the converse, if $\overline{H_{\star}}$ is compact, then by continuity of $\varphi$, its image $\partial_{\star}X$ must also be compact, but then by Lemma 3.3 in \cite{Murray2015} and Lemma 4.1 in \cite{CordesDur2017}, the space $X$ must be hyperbolic.
\end{proof}

We conclude this section by remarking that if $G$ is a finitely generated group acting by isometries on a proper geodesic metric space, then $G$ admits a natural action on Lip$(X, \mathbb{R})$, the space of 1-Lipschitz maps from $X$ to $\mathbb{R}$. In fact, we have the following 

\begin{proposition}

If $G$ is a finitely generated group acting by isometries on a proper geodesic metric space $X$, then the set $H_{\star}$ is $G$-invarient.
\end{proposition}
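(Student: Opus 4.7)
The plan is to verify directly that the natural action $(g \cdot h)(x) := h(g^{-1} x)$ of $G$ on $\text{Lip}(X, \mathbb{R})$ preserves each of the three defining features of $H_{\star}$: being distance-like, being Morse convex, and having a Morse gradient ray. In each case the point is simply that $g$ acts by isometries, so every metric condition transports verbatim along $g$.

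First I would check that $g \cdot h$ is distance-like (Definition \ref{def: distance-like}). If $(g \cdot h)(x) \geq \lambda$, then $h(g^{-1} x) \geq \lambda$, so $h^{-1}(\lambda)$ is nonempty and $h(g^{-1} x) = \lambda + d(g^{-1} x, h^{-1}(\lambda))$. The observation $(g \cdot h)^{-1}(\lambda) = g \cdot h^{-1}(\lambda)$ combined with $d(g^{-1}x, h^{-1}(\lambda)) = d(x, g \cdot h^{-1}(\lambda))$ (since $g$ is an isometry) gives the distance-like identity for $g \cdot h$.

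Next I would verify Morse convexity (Definition \ref{Morse-Convex}). Suppose $h \in H^{N}$ with Morse-convexity constant $K$ depending on a pair of $N$-Morse rays. Given $N$-Morse rays $\alpha,\beta$ to test $g \cdot h$, pull them back via $g^{-1}$: since $g^{-1}$ is an isometry, $g^{-1}\alpha$ and $g^{-1}\beta$ are again $N$-Morse, and any geodesic $[x_0,x_1]$ with endpoints on $\alpha, \beta$ pulls back to a geodesic $[g^{-1}x_0, g^{-1}x_1]$ with endpoints on $g^{-1}\alpha, g^{-1}\beta$, with corresponding intermediate point $g^{-1} x_t$. Applying the Morse-convexity inequality for $h$ on the pulled-back data and rewriting in terms of $g \cdot h$ then yields the same inequality with the same constant $K$. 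Thus $g \cdot h \in H^{N}$.

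Finally I would show $g \cdot h$ has a Morse gradient ray, which places it in $H_{\star}$. Let $\alpha$ be an $N$-Morse $h$-gradient ray; such an $\alpha$ exists by the hypothesis $h \in H_{\star}$, and by Lemma \ref{AllMorse} we may take it to start at any basepoint we like. Define $\alpha' := g \cdot \alpha$, which is a geodesic ray since $g$ is an isometry, and which is $N$-Morse because the Morse property is an isometry invariant. For $s,t \in [0,\infty)$,
\[
(g \cdot h)(\alpha'(t)) - (g \cdot h)(\alpha'(s)) = h(\alpha(t)) - h(\alpha(s)) = s - t,
\]
so $\alpha'$ satisfies Definition \ref{def: gradient arc} on every interval and is a Morse $(g \cdot h)$-gradient ray. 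Combining the three checks, $g \cdot h \in H_{\star}$, and applying this to both $g$ and $g^{-1}$ gives $G$-invariance. The argument involves no real obstacle; the only mildly delicate point is keeping track of $g$ versus $g^{-1}$ when moving between the ray picture and the function picture, which is why I set up $(g \cdot h)(x) = h(g^{-1} x)$ at the outset so that $(g \cdot h) \circ (g \cdot \alpha) = h \circ \alpha$.
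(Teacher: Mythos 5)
Your proof is correct and follows essentially the same approach as the paper: transport everything along the isometry $g$ and check that $g\alpha$ is a Morse gradient ray for $g\cdot h$ via the computation $(g\cdot h)(g\alpha(t))-(g\cdot h)(g\alpha(s))=h(\alpha(t))-h(\alpha(s))=s-t$. You are in fact more thorough than the paper, which only verifies the gradient-ray condition explicitly and leaves the preservation of the distance-like and Morse-convexity properties (needed since $H_{\star}\subseteq H$) implicit.
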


\begin{proof}
We remarked that due to Lemma 4.8, for any $p, q \in X$, we have $(H_{\star})_{p}=(H_{\star})_{q}=H_{\star}$; the subset of all $h \in H$ such that $h$ has some Morse gradient ray. Let $h \in H_{\star}$, his implies that $h$ has some Morse gradient ray $\alpha$. We want to show that $gh$ also have some Morse gradient ray. Remark that since $g$ is an isometry, the geodesic ray given by $g\alpha$ defined by $(g\alpha)(t)=g \alpha(t)$ is also Morse. We also have $(gh)(g\alpha(t))-(gh)(g\alpha(s))=h(g^{-1}g\alpha(t)))-h(g^{-1}g\alpha(s))=h(\alpha(t))-h(\alpha(s))=s-t$
\end{proof}

\begin{corollary} \label{Mostimportant}

The map $\varphi: \overline{H_{\star}} \twoheadrightarrow \partial_{\star}X$ is $G$-equivariant.
\end{corollary}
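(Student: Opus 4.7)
The plan is to show that the definitions of the two $G$-actions are compatible with the assignment $\overline{h} \mapsto [\alpha]$ where $\alpha$ is a Morse $h$-gradient ray. Recall that $G$ acts on $\text{Lip}(X, \mathbb{R})$ (and hence on $\overline{H_\star}$) by $(gh)(x) = h(g^{-1}x)$, while it acts on $\partial_\star X$ by $g \cdot [\alpha] = [g \alpha]$ where $(g\alpha)(t) = g \cdot \alpha(t)$. By the previous proposition we already know $H_\star$ is $G$-invariant, so both sides of the desired equality $\varphi(g \overline{h}) = g \varphi(\overline{h})$ make sense.

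The core of the argument is the naturality of gradient rays under isometries. First, I would fix $\overline{h} \in \overline{H_\star}$ and let $\alpha$ be a Morse $h$-gradient ray so that $\varphi(\overline{h}) = [\alpha]$. Then I would verify, using precisely the computation at the end of the previous proposition, that $g\alpha$ is a $(gh)$-gradient ray: for all $s,t \in [0,\infty)$,
\[
(gh)(g\alpha(t)) - (gh)(g\alpha(s)) = h(g^{-1}g\alpha(t)) - h(g^{-1}g\alpha(s)) = h(\alpha(t)) - h(\alpha(s)) = s - t.
\]
Since $g$ acts by isometries, $g\alpha$ is a geodesic ray, and its Morse property is inherited from $\alpha$ (any $(\lambda,\epsilon)$-quasi-geodesic with endpoints on $g\alpha$ pulls back via $g^{-1}$ to one with endpoints on $\alpha$, which stays in $\mathcal{N}_{N(\lambda,\epsilon)}(\alpha)$, and isometrically pushing forward gives the same bound for $g\alpha$). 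Hence $g\alpha$ is a Morse $(gh)$-gradient ray.

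Now I would conclude by applying the well-definedness of $\varphi$ (Lemma \ref{unique point}): since $g\alpha$ is a Morse gradient ray for $gh$, and any two Morse gradient rays of the same function define the same point in $\partial_\star X$, we obtain
\[
\varphi(g\overline{h}) = [g\alpha] = g \cdot [\alpha] = g \cdot \varphi(\overline{h}).
\]
As this holds for every $\overline{h} \in \overline{H_\star}$ and every $g \in G$, the map $\varphi$ is $G$-equivariant.

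There is no substantive obstacle here; the proof is essentially a direct consequence of the preceding proposition, with the only mild point being the observation that the Morse condition transfers along an isometry, which is immediate from the definition. The entire argument is really packaged already in the computation that $H_\star$ is $G$-invariant.
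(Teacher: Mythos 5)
Your proof is correct and takes essentially the same route as the paper: the paper simply cites the previous proposition (the $G$-invariance of $H_\star$, whose proof contains exactly the computation that $g\alpha$ is a Morse $(gh)$-gradient ray) together with the definition of $\varphi$, and you have just unpacked that citation explicitly. Nothing more is needed.
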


\begin{proof}
This follows by the previous Proposition and by the definition of the map $\mathbb{\varphi}.$

\end{proof}

\section{A symbolic presentation of the Morse boundary}
In his famous paper \cite{Gromov1987}, Gromov gives a construction describing the boundary of a $\delta$-hyperbolic group $G$ in terms of equivalence classes of ``derivatives" on its Cayley graph. In this section, we develop a similar construction for the Morse boundary. More precisely, Gromov shows that there exists a finite set of symbols, denoted by $\mathcal{A}$, a $G$-equivariant subset $Y \subseteq \mathcal{A}^G$ and a continuous surjection $\varphi:Y  \twoheadrightarrow \partial G$ which is $G$-equivariant. This is called a \emph{symbolic coding} or \emph{symbolic presentation} of $\partial G$. In other words, if a group $G$ acts on a topological space $X$, we say that this action admits a symbolic coding if there exists a finite set $\mathcal{A}$, some $G$-equivariant subset $Y$ of $\mathcal{A}^G$ along with a continuous surjection of $Y$ onto $X$ which is $G$-equivariant.

 The purpose of this section is to use the map developed in the previous section to show that the action of a finitely generated group $G$ on its Morse boundary admits a symbolic coding. Notice that we have already established a $G$-equivariant continuous surjection $\varphi: \overline{H_{\star}}  \twoheadrightarrow     \partial_{\star} G$. Thus, the only part left now is to show that the set $\overline{H_{\star}}$ has a natural identification with some subset $Y_{\star}$ of $\mathcal {A}^{G}$ for some finite set $\mathcal{A}.$
 
\vspace{5mm}

 {\bf Assumption:} \emph{ For the rest of this section, we fix $G=<S>$ to be a finitely generated group and $X=Cay(G,S).$}

\vspace{5mm}

\begin{definition}
Let $\mathcal A$ be a finite set and let $G$ be a finitely generated group. The \emph{shift space}, denoted by $\mathcal{A}^G$, is defined to be the collection of all maps from $G$ to $\mathcal{A}$.
\end{definition}

Notice that the above shift space $\mathcal{A}^G$ admits a natural action of $G$, called the \emph{Bernoulli shift action} given by the following; if $g \in G$ and $ \sigma:G \rightarrow \mathcal{A}$, we have $(g\sigma)(h)=\sigma(g^{-1}h)$.

Given a finitely generated group $G$ and a finite set of symbols $\mathcal{A}$, the shift space $\mathcal{A}^G$ defined above is simply the collection of all decorations of the Cayley graph's vertices by symbols from $\mathcal{A}.$ For example, if $G=\mathbb{Z}^2$ and $\mathcal{A}=\{X,O\}$, then the set $\mathcal{A}^G$ is simply the set of all tilings of the grid $\mathbb{Z}^2$ with $X$ and $O.$ Let $\text{Lip}^{n}(G,H)$ denote the space of all $n$-Lipschitz maps from $G$ to $H$. Now we define the derivative of an $n$-Lipschitz map (this definition was first introduced by Cohen in \cite{Cohen2017}).

\begin{definition}
Let $G,H$ be finitely generated groups and let $f  \in \text{Lip}^{n}(G,H)$. The derivative of $f$, denoted by $df$ is defined to be a map $df:G \rightarrow B(1_H,n)^{S}$ given by
$g \mapsto (s \mapsto f(g)^{-1}f(gs)).$
Denote the collection of all such derivatives by $DL.$ In other words, $DL:=\{df| \,\,f \in \text{Lip}(G,H)\} \subseteq \mathcal{A}^G$ where $\mathcal{A}=B(1_H,n)^{S}.$ 
\end{definition}

\begin{remark}
Notice that each $f$ is completely determined by $df$ and by $f(1)$. In other words, if we define $\text{Lip}_{e}^{n}(G,H)=\{f \in \text{Lip}^{n}(G,H)\,| \,f(e)=0\}, $ then we have a homeomorphism  $$ \text{Lip}_{e}^{n}(G,H) \cong DL$$. 
\end{remark}

\begin{definition}
Let $\mathcal{A}$ be a finite set and let $F$ be a finite subset of $G$. A pattern is a map $\sigma:F \rightarrow \mathcal{A}$. Let $\mathcal{F}:=\{\sigma_i:F \rightarrow \mathcal{A}\}$ be a finite set of patterns all defined on the same finite set $F$, we define the set $X|_{\mathcal{F}}:=\{ \sigma \in \mathcal{A}^G\,|\, (g\sigma)|_{F} \notin \mathcal{F}\,\,\text{for all}\, g \in G\}$. In this case we say that $\mathcal{F}$ is a finite set of \emph{ forbidden patterns}.
\end{definition}

\begin{definition}
A subset $Y$ of $\mathcal{A}^G$ is said to be a \emph{subshift} if it is closed and $G$-equavarient. A subshift $Y$ of $\mathcal{A}^G$ is said to be \emph{a subshift of finite type} if $Y=\text{Clo}(X|_{\mathcal{F}})$ for some $\mathcal{F}$ as above, where $\text{Clo}(X|_{\mathcal{F}})$ denotes the closure of $X|_{\mathcal{F}}$ in $\mathcal{A}^G.$
\end{definition}

The following is an example of a subshift of finite type. If we again take $G=\mathbb{Z}^2$ and take $\mathcal{A}=\{X,O\}$. Define $Y \subset \mathcal{A}^G$ to be the subset of all $\sigma:G \rightarrow \{X,O\}$ such that neither symbol ever shows up three times in a row, horizantally, vertically or diagonally. This subshift corresponds to the set of all possible $\mathbb{Z}^2$ tic-tac-toe games with no winner. We remark that this example appears in David Cohen's research statement.

 In \cite{Cordes2017}, Cohen showed that $DL \subseteq \mathcal{A}^G$ is a subshift of finite type. For the rest of this paper, we specilize to the case where $H=\mathbb{Z}$ and $n=1$. In other words, we consider the space of all $1$-Lipschitz  maps from $G$ to $\mathbb{Z}$; denoted by Lip($G, \mathbb{Z})$ and the derivatives space, $DL=\{df|f \in \text{Lip}(G,\mathbb{Z})\} \subseteq \mathcal{A}^G$ where $\mathcal{A}=\{-1,0,1\}^S$.


\begin{definition}

Let $(H_{\star})_0$ denote the subset of all $h \in H_{\star}$ such that $h$ takes integer values on the vertices of $X.$ In other words, $$(H_{\star})_0 := \{h \in H_{\star}| \,\, \text{and}\, \,h(X^{0}) \subseteq \mathbb{Z}\}$$
\end{definition}

Now let $\overline{(H_{\star})_0}$ denote the quotient of $(H_{\star})_0$ by subspace of constant functions. Notice that since each $h \in (H_{\star})_0 \subseteq \text{Lip}(X, \mathbb{\mathbb{R}}),$ takes integral values on the vertices of $X$, then $(H_{\star})_0$ can be thought of as a subset of $\text{Lip}(G, \mathbb{\mathbb{Z}}).$ Also, the subspace $(H_{\star})_0$ is clearly $G$-equivariant. The next Lemma states that $(H_{\star})_0$ in the above definition is still big enough to surject on the Morse boundary.

\begin{lemma}
For any Morse ray $c$ starting at $e,$ the corresponding Busemann functions $b_c \in (H_{\star})_0.$
\end{lemma}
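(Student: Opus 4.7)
The plan is to split the claim into two assertions: that $b_c$ lies in $H_{\star}$, and that $b_c$ takes integer values on the vertex set $X^0 = G$. The first is essentially free: since $c$ is an $N$-Morse ray starting at $e$ for some Morse gauge $N$, Lemma \ref{busemann is convex} tells us directly that $b_c \in H_e^N \subseteq H_{\star}$. So the only genuine content in the statement is the integrality of $b_c$ on vertices.

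To establish integrality, I would first exploit the combinatorial structure of the Cayley graph: $X = \mathrm{Cay}(G,S)$ has all edges of length $1$, and any geodesic segment between two vertices is an edge-path. Consequently, a geodesic ray starting at the vertex $e$ and parameterized by arc length must satisfy $c(n) \in X^0 = G$ for every non-negative integer $n$. Then for any vertex $g \in G$, the quantity $d(g, c(n))$ is a non-negative integer (being the word-length distance between two group elements), so each term
\[
d(g, c(n)) - n
\]
is an integer.

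Next, I would pass to the limit defining $b_c(g)$ along the subsequence $t = n \in \mathbb{N}$. A standard triangle-inequality computation shows that the map $t \mapsto d(g, c(t)) - t$ is non-increasing in $t$ and bounded below by $-d(g,e)$, so its limit (which exists, cf.\ the remark after Definition \ref{def: distance-like}) coincides with the limit of the integer sequence $\{d(g, c(n)) - n\}_{n \in \mathbb{N}}$. A non-increasing sequence of integers that is bounded below is eventually constant, hence its limit is an integer. Therefore $b_c(g) \in \mathbb{Z}$ for every $g \in G$, which combined with the first step gives $b_c \in (H_{\star})_0$.

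There is no real obstacle here beyond bookkeeping: the argument hinges only on the fact that in $\mathrm{Cay}(G,S)$ geodesics between vertices are edge-paths, together with monotonicity in the definition of the Busemann function. The one spot meriting care is ensuring that the arc-length parameterization of $c$ really does hit vertices at integer times — this uses crucially that $c(0) = e$ is a vertex, which explains the hypothesis on the starting point.
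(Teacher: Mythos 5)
Your proof is correct and takes essentially the same route as the paper: it first invokes Lemma \ref{busemann is convex} to get $b_c \in H_{\star}$, and then deduces integrality on vertices from the fact that $c$ starts at the vertex $e$. The paper compresses the second step into ``the result follows,'' whereas you supply the missing details --- that $c(n)$ is a vertex for each integer $n$, and that $d(g,c(n))-n$ is a non-increasing, bounded-below sequence of integers, hence eventually constant with integer limit --- and these all check out.
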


\begin{proof}
In Lemma \ref{busemann is convex}, we have already shown that $b_c \in H_{\star}.$ Now, since $X=Cay(G,S)$ and the geodesic ray $c$ starts at $e,$ the result follows. 
\end{proof}

Note that in Corollary \ref{Mostimportant}, we established a continuous $G$-equivariant surjection $\varphi: \overline{H_{\star}} \twoheadrightarrow \partial_{\star}X$. Since $\overline{(H_{\star})_{0}} \subseteq  \overline{H_{\star}}$, then by the previous Lemma, if we let  $\varphi_{0}:=\varphi|_{\overline{(H_{\star})_{0}}}$, we get the following

\begin{corollary}
If $\partial_{\star} G$ is the Morse boundary of $G$, then there exists a continuous $G$-equivariant surjection $\varphi_{0}:\overline{(H_{\star})_{0}} \twoheadrightarrow \partial_{\star} G$.
\end{corollary}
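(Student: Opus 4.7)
The plan is to take $\varphi_0 := \varphi|_{\overline{(H_{\star})_{0}}}$ and verify each of the three required properties in turn; only surjectivity requires real work, and that work has essentially been prepared by the preceding lemma.

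First, I would dispense with continuity and equivariance. Continuity of $\varphi_0$ is automatic: it is the restriction of the continuous map $\varphi$ from Corollary \ref{Mostimportant} to the subspace $\overline{(H_{\star})_{0}} \subseteq \overline{H_{\star}}$. For $G$-equivariance, I would first check that $(H_{\star})_0$ is a $G$-invariant subset of $H_{\star}$. Since $G$ acts by isometries on $X = \mathrm{Cay}(G,S)$ and this action permutes the vertex set $X^0$, if $h(X^0) \subseteq \mathbb{Z}$ then $(gh)(x) = h(g^{-1}x)$ also takes integer values on $X^0$ for each $g \in G$. Combined with the fact that $H_{\star}$ itself is $G$-invariant (the Proposition preceding Corollary \ref{Mostimportant}), this shows $g \cdot (H_{\star})_0 \subseteq (H_{\star})_0$. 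Hence $\varphi_0$ inherits $G$-equivariance from $\varphi$.

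The main point is surjectivity, and here the strategy is to realize an arbitrary Morse boundary point by a Busemann function lying in $(H_{\star})_0$. Fix $\zeta \in \partial_{\star} G$. By Definition \ref{The Morse Boundary}, taking the basepoint $p = e$ (the identity vertex), $\zeta$ is represented by a Morse geodesic ray $c : [0,\infty) \to X$ with $c(0) = e$. By the previous Lemma, the Busemann function $b_c$ lies in $(H_{\star})_0$, so $\overline{b_c} \in \overline{(H_{\star})_{0}}$. It remains to check that $\varphi(\overline{b_c}) = \zeta$. For this I would note that $c$ is itself a $b_c$-gradient ray: a direct computation from the defining limit gives $b_c(c(s)) - b_c(c(t)) = t - s$ for all $s,t \in [0,\infty)$ (for $r > \max(s,t)$ the expression $[d(c(s),c(r)) - r] - [d(c(t),c(r)) - r]$ equals $(r-s)-(r-t) = t - s$, so the limit equals $t-s$). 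Since $c$ is Morse by hypothesis, it is a Morse gradient ray associated with $b_c$, and therefore $\varphi(\overline{b_c}) = [c] = \zeta$ by definition of $\varphi$.

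I do not expect a serious obstacle: Lemma \ref{busemann is convex} together with the hypothesis that $c$ starts at the vertex $e$ already places $b_c$ in $(H_{\star})_0$, and the well-definedness of $\varphi$ established in Theorem \ref{maintheorem} via Lemma \ref{unique point} ensures that the value of $\varphi(\overline{b_c})$ is independent of which Morse gradient ray we use to represent it. The only point that warrants careful attention is that every point of $\partial_{\star} G$ really can be represented by a Morse ray starting at the vertex $e$; this is guaranteed by the definition of $\partial^{N} X_p$ and the fact that $e$ is a vertex of $X = \mathrm{Cay}(G,S)$, so choosing $p = e$ is legitimate.
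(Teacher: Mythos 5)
Your proposal is correct and follows essentially the same route as the paper: define $\varphi_0$ as the restriction of $\varphi$ to $\overline{(H_{\star})_{0}}$, inherit continuity and equivariance from Corollary \ref{Mostimportant} together with the $G$-invariance of $(H_{\star})_0$, and obtain surjectivity from the preceding lemma placing $b_c$ in $(H_{\star})_0$ for every Morse ray $c$ based at $e$. The extra verification that $c$ is a $b_c$-gradient ray, so that $\varphi(\overline{b_c})=[c]$, is a correct and welcome detail the paper leaves implicit.
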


Notice that $\overline{(H_{\star})_{0}}$ is homeomporphic to the space $\{h \in (H_{\star})_{0}|\,\,h(e)=0\}\subseteq \text{Lip}_{e}(G, \mathbb{Z})$ and $\text{Lip}_{e}(G, \mathbb{Z}) \cong DL \subseteq \mathcal{A}^{G}$. We get a $G$-equivariant homeomorphism $\psi$ from $\overline{(H_{\star})_{0}}$ to a subset $Y_{\star}\subseteq \mathcal{A}^G.$ Now, if we let $\phi=\varphi_{0}\psi^{-1}$, we get the following

\begin{corollary} The action of $G$ on its Morse boundary admits a symbolic coding. In other words, there exists a finite set $\mathcal{A},$ a $G$-equivariant subset $Y_{\star} \subseteq \mathcal{A}^G,$ and continuous surjection $\phi: Y_{\star} \twoheadrightarrow \partial_{\star}G$ which is $G$-equivariant.
\end{corollary}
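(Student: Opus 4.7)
The plan is to assemble the desired symbolic coding by composing two maps already essentially constructed in the excerpt: the continuous $G$-equivariant surjection $\varphi_0: \overline{(H_\star)_0} \twoheadrightarrow \partial_\star G$ from the preceding corollary, and a homeomorphism $\psi$ identifying $\overline{(H_\star)_0}$ with a suitable subset $Y_\star$ of $\mathcal{A}^G$ where $\mathcal{A} = \{-1,0,1\}^S$.

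First I would fix the representative space. Since $\overline{(H_\star)_0}$ is the quotient by constants, it is homeomorphic to the subspace $(H_\star)_0^{(e)} := \{h \in (H_\star)_0 \mid h(e)=0\}$, where one normalizes each class by its value at the identity. Because elements of $(H_\star)_0$ take integer values on $X^{0} = G$, restriction gives an embedding $(H_\star)_0^{(e)} \hookrightarrow \text{Lip}_e(G,\mathbb{Z})$. On this latter space the derivative map $d:\text{Lip}_e(G,\mathbb{Z}) \to DL \subseteq \mathcal{A}^G$ defined by $df(g)(s) = f(gs) - f(g)$ is a homeomorphism by the remark following the definition of derivatives: $df$ together with $f(e)=0$ recovers $f$ by telescoping along any word in the generators. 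I would therefore set $Y_\star := \psi\big((H_\star)_0^{(e)}\big)$ where $\psi$ is the restriction of $d$, giving a homeomorphism $\psi: \overline{(H_\star)_0} \xrightarrow{\cong} Y_\star$.

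Next I would verify $G$-equivariance. The action of $G$ on $\overline{(H_\star)_0}$ (via isometries of $X=\text{Cay}(G,S)$) sends $h \mapsto g\cdot h$ with $(g\cdot h)(x) = h(g^{-1}x)$. On the representative subspace, one must re-normalize to kill the value at $e$, but since $\varphi_0$ is insensitive to adding constants this causes no issue. A direct calculation on the derivative side shows
\[
d(g\cdot h)(x)(s) = (g\cdot h)(xs) - (g\cdot h)(x) = h(g^{-1}xs) - h(g^{-1}x) = dh(g^{-1}x)(s),
\]
which is exactly the Bernoulli shift action $(g \cdot dh)(x) = dh(g^{-1}x)$. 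Hence $\psi$ intertwines the two actions, and $Y_\star$ is a $G$-invariant subset of $\mathcal{A}^G$.

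Finally I would define $\phi := \varphi_0 \circ \psi^{-1}: Y_\star \twoheadrightarrow \partial_\star G$. Since $\varphi_0$ is a continuous $G$-equivariant surjection (previous corollary) and $\psi^{-1}$ is a $G$-equivariant homeomorphism, the composite $\phi$ has all three required properties. The main obstacle, which is mostly bookkeeping, is keeping straight the identification between quotienting by constants and normalizing to $h(e)=0$, and checking that the normalization commutes with the $G$-action in the derivative picture; once done, the statement reduces to formally composing the two maps. Note in particular that one does \emph{not} need to show $Y_\star$ is closed or of finite type for the statement as written (only $G$-equivariance and continuity of $\phi$ are required), although $Y_\star \subseteq DL$ and Cohen's result from \cite{Cohen2017} that $DL$ itself is a subshift of finite type gives a cleaner ambient description.
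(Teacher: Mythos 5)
Your proposal is correct and follows essentially the same route as the paper: normalize $\overline{(H_\star)_0}$ by the value at $e$, identify it with a subset $Y_\star$ of $\mathcal{A}^G$ via the derivative map $d$, and set $\phi = \varphi_0 \circ \psi^{-1}$. The only difference is that you spell out the Bernoulli-shift equivariance computation $d(g\cdot h)(x)(s) = dh(g^{-1}x)(s)$, which the paper leaves implicit.
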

It is known that an action of a discrete group on a Polish topologcial space, which is a certain kind of metrizable space, admits a symolic coding (Theorem 1.4 of \cite{Seward2014}). But, the Morse boundary is not metrizable or even second countable \cite{Murray2015}. However, the above corollary states that the action of $G$ on its Morse boundary still admits a symbolic coding.

We end this section with the following questions:

\begin{question}
In the case where $G$ is a hyperbolic group, the analogous map $\phi: Y_{\star} \rightarrow \partial_{\star}G$ is finite to one. Is that still the case here?
\end{question}

\begin{question}
In the case where $G$ is a hyperbolic group, and for some analogous map $\phi: Y_{\star} \rightarrow \partial_{\star}G$, the set $Y_{\star}$ is a subshift of finite type. If $Y_{\star}$ is given the subspace topology, is it a subshift of finite type? I believe the answer is no.
\end{question}

\begin{question}
By the previous theorem, the Morse boundary of a finitely generated group $G$ can be thought of as a subspace of maps from $G$ to a finite set $\mathcal{A}$. Can one give a characterization of Morse rays in terms of those maps in $\mathcal{A}^G?$
\end{question}

In \cite{zalloum2018}, we show that for a finitely generated group $G=<S>$, and for a fixed $D \geq 0$, the langauge of all $D$-contracting geodesics is a regular langauge. In terms of symbolic dynamics, this result can be restated as follows. For a finitely generated group $G=<S>$, and for a fixed $D \geq 0$, the subshift $Y \subseteq S^{\mathbb{Z}}$ consisting of all $D$-contracting geodesic lines is a sofic shift; meaning that $Y=X_{\mathcal{F}}$ where $\mathcal {F}$ is a regular language. A natural question to ask is
\begin{question}
Is $Y$ a subshift of finite type?
\end{question}

\section{The behaviour horofucntions in CAT(0) spaces}
In this section, we assume $X$ is a proper complete CAT(0) space. The goal is to study the behaviour of Busemann functions whose defining rays are Morse.
\begin{definition}[horofunctions] A convex distance-like function is said to be a \emph{horofunction}

\end{definition}

\begin{lemma}\label{one condition}
Let $h$ be a horofunction. Then $h$ must satisfy the following:\\
For any $x_0 \in X$ and any $r>0,$ the function $h$ attains its minimum on the sphere $S_{r}(x_0)$ at a unique point $y$ with $h(y)=h(x_0)-r$
\end{lemma}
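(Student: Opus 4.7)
The plan is to prove the three assertions in the statement in turn: first the obvious lower bound on $h|_{S_r(x_0)}$, then existence of a point on the sphere realizing that bound, and finally uniqueness via the CAT(0) midpoint inequality combined with the convexity of $h$.

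First I would note that since $h$ is distance-like, Proposition \ref{Lip} gives that $h$ is $1$-Lipschitz. Therefore for every $y\in S_r(x_0)$ we have $h(x_0)-h(y)\le d(x_0,y)=r$, i.e.\ $h(y)\ge h(x_0)-r$. This gives the lower bound for the minimum on the sphere.

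For the existence of a minimizer at exactly the value $h(x_0)-r$, I would invoke Proposition \ref{funny} with $\lambda:=h(x_0)-r$. Since $r>0$, we have $h(x_0)>\lambda$, so there exists $y\in X$ with $h(y)=\lambda=h(x_0)-r$ and $d(x_0,y)=h(x_0)-h(y)=r$. In particular $y\in S_r(x_0)$ and $h$ attains its minimum on the sphere at this point, with value $h(x_0)-r$ as required.

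The only nontrivial step is uniqueness, and this is where the CAT(0) hypothesis and the genuine convexity of horofunctions (convex in the sense $K=0$ of Definition 4.1, which for a proper complete CAT(0) space holds for Busemann functions and more generally for the horofunctions considered in Section 6) are crucial. Suppose for contradiction that $y_1,y_2\in S_r(x_0)$ are two distinct minimizers, so $h(y_1)=h(y_2)=h(x_0)-r$. Let $m$ be the midpoint of the (unique) geodesic segment $[y_1,y_2]$. By the CAT(0) midpoint inequality applied to the comparison triangle with vertex $x_0$, we have the strict inequality $d(x_0,m)<r$ whenever $y_1\neq y_2$. On the one hand, convexity of $h$ along $[y_1,y_2]$ gives $h(m)\le \tfrac{1}{2}h(y_1)+\tfrac{1}{2}h(y_2)=h(x_0)-r$. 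On the other hand, the $1$-Lipschitz property yields $h(m)\ge h(x_0)-d(x_0,m)>h(x_0)-r$. These two bounds contradict each other, so $y_1=y_2$. I expect this uniqueness argument to be the subtlest part, since it relies on combining the strict CAT(0) midpoint inequality with the exact (not coarse) convexity of $h$; the rest is essentially a direct application of previously established propositions.
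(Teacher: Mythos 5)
Your proof is correct and follows essentially the same route as the paper: existence via Proposition~\ref{funny}, minimality via the $1$-Lipschitz property of distance-like functions, and uniqueness by producing a point of $[y_1,y_2]$ that lies strictly inside the ball of radius $r$ yet has $h$-value at most $h(x_0)-r$, contradicting the Lipschitz bound. Your uniqueness step is in fact slightly cleaner than the paper's, which obtains the interior point from uniqueness of projections onto geodesics in CAT(0) spaces and then routes the final contradiction through an $h$-gradient ray, whereas you get the strict inequality $d(x_0,m)<r$ directly from the CAT(0) midpoint inequality and conclude in one line.
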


\begin{proof}
Let $x_0 \in X$ and let $r>0$. First we show that there exists $y$ on the sphere $S_{r}(x_0)$ with $h(y)=h(x_0)-r,$ but this is Lemma \ref{funny}. Now we show that $h(y)$ is in fact a minumum on the sphere. If there exists some $w \in S_{r}(x_0)$ with $h(w)<h(y)=h(x_0)-r$, then one would have $d(x_0,w)=r<h(x_0)-h(w),$ but that is not possible as $h$ is 1-Lipschitz. Now we are left to show uniqueness. Suppose for the sake of contradiction that $\exists y' \in S_{r}(x_0)$ such that $h(y')=h(y)=h(x_0)-r$ with $y' \neq y.$ Let $[y,y']$ be the unique geodesic segment connecting $y$ and $y'$. Notice that by convexity of the CAT(0) metric, we have $d(x_0,x) \leq r$ for all $x \in [y, y'].$ But since projections to geodesics in a CAT(0) are unique, there must exist some point $x \in [y,y']$ such that $d(x,x_0)<r$. Since $h$ is convex and since $y$ and $y'$ satisfy $h(y)=h(y'),$ we must have that $ h(x) \leq h(y)$. Let $g$ be an $h$-gradient ray with an initial subsegment $[x_0,y]$. Notice that $h \circ g$ is a strictly decreasing function of $t$. Let $t$ be so that $g(0)=x_0$ and $g(t)=y.$ Since $h(x) \leq h(y),$ there must exist some $s \geq t$ with $h(g(s))=h(x).$ Now notice that $r=d(x_0,y)=d(g(0),g(t))=t \leq s=d(g(s), g(0))=s-0=h(g(0))-h(g(s))=h(x_0)-h(x) \leq d(x_0,x)<r$ which is a contradiction.
\end{proof} 

Let $X$ be any metric space, and let $C(X)$ be the collection of all continuous maps $f:X \rightarrow \mathbb{R}.$ Let $C_{\star}(X)$ be the quotient of $C(X)$ by all constant maps. There is a natural embedding $i:X \rightarrow C_{\star}(X)$ by $i(x)=\overline{d(x,-)}$, denote $\hat{X}$ the closure of $i(X)$ in $C_{\star}(X)$. Now let $B$ be the collection of all continuous maps $h:X \rightarrow \mathbb{R}$ satisfying the following three conditions:
\begin{itemize}
    \item $h$ is convex
    \item $h$ is 1-Lipschitz
    \item For any $y_0 \in X$ and any $r>0,$ the function $h$ attains its minimum on the sphere $S_{r}(y_0)$ at a unique point $y$ with $h(y)=h(y_0)-r$
\end{itemize}

\begin{proposition}\label{all equivalent}

Let $X$ be a complete CAT(0) space and fix $x_0 \in X.$ If $h$ is a distance-like function with $h(x_0)=0$ then the following are equivalent:

\begin{enumerate}
  
    \item h is a horofunction
    \item $h \in B$
    
    \item h is a Busemann function for some geodesic ray $c$
    
    \item $h \in \hat{X}-i(X)$

\end{enumerate}
\end{proposition}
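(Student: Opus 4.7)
The plan is to prove the four-way equivalence via $(1)\Leftrightarrow(2)$, $(3)\Rightarrow(1)$, $(3)\Rightarrow(4)$, $(4)\Rightarrow(3)$, and $(2)\Rightarrow(3)$. The equivalence $(1)\Leftrightarrow(2)$ is essentially bookkeeping: a horofunction is convex and distance-like, hence $1$-Lipschitz by Proposition \ref{Lip}, and Lemma \ref{one condition} supplies the unique-minimum condition, so $(1)\Rightarrow(2)$; conversely, $(2)$ already contains convexity and the preamble provides distance-like, recovering $(1)$. The implications from $(3)$ are standard CAT(0) facts: in a CAT(0) space, a Busemann function is convex (classical Bridson--Haefliger estimate) and it is already shown to be distance-like earlier in the paper, so $(3)\Rightarrow(1)$; and since $\overline{d(c(t),-)} \to \overline{b_c}$ uniformly on compact sets with $b_c(c(t)) \to -\infty$ (precluding $b_c \in i(X)$), we also get $(3)\Rightarrow(4)$.

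For $(4)\Rightarrow(3)$, suppose $\overline{d(x_n,-)} \to \overline{h}$ in $C_\star(X)$ with the limit outside $i(X)$. Properness of $X$ rules out bounded subsequences of $\{x_n\}$ (any would converge in $X$, forcing $h \in i(X)$), so $d(x_0, x_n) \to \infty$. By the compactness of the visual compactification of the proper CAT(0) space $X$, we may pass to $x_n \to \xi \in \partial_\infty X$. Let $c$ be the unique geodesic ray from $x_0$ to $\xi$; a standard CAT(0) computation (the geodesics $[x_0, x_n]$ fellow-travel $c$ on initial segments of length tending to $\infty$) yields $d(x_n, y) - d(x_n, x_0) \to b_c(y)$ pointwise, whence $h = b_c$.

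The core of the proof is $(2)\Rightarrow(3)$. Given $h$ satisfying $(2)$, use the uniqueness clause to define $c:[0,\infty)\to X$ by letting $c(t)$ be the unique minimum of $h$ on the sphere $S_t(x_0)$, so that $h(c(t))=-t$. First, $c$ is a geodesic ray: on the geodesic $\gamma=[x_0, c(t)]$ parametrized by arc length, convexity forces $h(\gamma(t'))\leq -t'$ while $1$-Lipschitz forces $h(\gamma(t'))\geq -t'$ for $t'<t$; the resulting equality makes $\gamma(t')$ a minimizer on $S_{t'}(x_0)$, and uniqueness gives $\gamma(t')=c(t')$, so the sub-arcs nest into a genuine ray (and, by construction, an $h$-gradient ray). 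It then remains to prove $h=b_c$. The inequality $b_c\geq h$ is immediate: $c(t)\in h^{-1}(-t)$, so by the distance-like property $d(x,c(t))\geq d(x, h^{-1}(-t))=h(x)+t$, and taking $t\to\infty$ yields $b_c(x)\geq h(x)$.

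The main obstacle is the matching upper bound $b_c\leq h$. The naive triangle-inequality estimate only gives $b_c(x)\leq h(x)+d(x_0,x)$, which is too weak (in $\mathbb{R}^2$ with $h(x,y)=-y$, it misses the true limit by the full horospherical distance $|x_1|$). The plan is to invoke a CAT(0) Pythagorean-type asymptotic estimate. The $1$-Lipschitz projection $\pi_{-t}$ onto the convex sublevel set $\{h\leq -t\}$ sends $x_0$ to $c(t)$ and $x$ to $c_x(h(x)+t)$, where $c_x$ is the $h$-gradient ray from $x$ produced by the uniqueness condition of $(2)$ at the basepoint $x$; so the triangle with vertices $x$, $c_x(h(x)+t)$, $c(t)$ has an ``axial'' side of length $h(x)+t$ and a ``horospherical'' side of length at most $d(x,x_0)$. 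CAT(0) comparison with a Euclidean right triangle forces the hypotenuse $d(x,c(t))$ to exceed the axial length by at most $O(1/t)$, so $d(x,c(t))-t\to h(x)$ and $b_c(x)\leq h(x)$. Extracting this CAT(0) asymptotic estimate cleanly -- effectively that the rays $c$ and $c_x$ share the same ideal endpoint $\xi\in\partial_\infty X$ and the triangle above degenerates onto a flat half-strip asymptotically -- is the technical crux of the argument.
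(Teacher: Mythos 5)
Your route differs from the paper's in a substantive way. The paper proves $(1)\Leftrightarrow(2)$ exactly as you do (Proposition \ref{Lip} plus Lemma \ref{one condition} in one direction, the standing distance-like hypothesis in the other), but for the equivalence of $(2)$, $(3)$ and $(4)$ it simply cites Corollary 8.20 and Proposition 8.22 of the Horofunctions and Busemann functions section of \cite{BH}. You instead attempt to reprove that block. Your $(3)\Rightarrow(1)$, $(3)\Rightarrow(4)$ and $(4)\Rightarrow(3)$ are reasonable sketches of standard CAT(0) facts, and your construction of the gradient ray $c$ in $(2)\Rightarrow(3)$ (convexity plus $1$-Lipschitz forcing $h(\gamma(t'))=-t'$ along $[x_0,c(t)]$, then uniqueness of the minimizer) is correct, as is the easy inequality $b_c\geq h$.

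However, the step you yourself flag as the crux, $b_c\leq h$, contains a genuine gap, and the mechanism you propose does not close it. The projection property in a CAT(0) space says that the Alexandrov angle at $p_t=\pi_{\{h\leq -t\}}(x)$ between $[p_t,x]$ and $[p_t,c(t)]$ is \emph{at least} $\pi/2$; since Alexandrov angles are bounded above by comparison angles, the CAT(0) law of cosines then gives $d(x,c(t))^2\geq d(x,p_t)^2+d(p_t,c(t))^2$ --- a \emph{lower} bound on the hypotenuse, the opposite of the Pythagorean \emph{upper} bound $d(x,c(t))^2\leq (h(x)+t)^2+d(x,x_0)^2$ that your argument needs. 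Combining the lower bound with the triangle inequality only yields $\limsup_t\,[d(x,c(t))-t]\leq h(x)+\lim_t d(p_t,c(t))$, and that last limit need not vanish (it is a positive constant already for two parallel gradient rays in $\mathbb{R}^2$, where the conclusion nevertheless holds for a different reason). So no angle-comparison estimate of the kind you invoke can finish the proof; what is actually needed is the quantitative form of the uniqueness-of-minimum condition used in the proof of Proposition 8.22 of \cite{BH} (completeness and convexity force any near-minimizer of $h$ on $S_r(x_0)$ to lie close to $c(r)$, which one then applies to a suitable point of $[x,c(t)]$). As written, your $(2)\Rightarrow(3)$ is incomplete at exactly this point, whereas the paper avoids the issue by citation.
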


\begin{proof}
For $(1) \Longrightarrow (2)$ notice that since $h$ is distance-like, then by Proposition \ref{Lip} it is 1-Lipschitz. Now Lemma $\ref{one condition}$ implies that $h \in B$. The implication $(2) \Longrightarrow (1)$ is by definition of $B$. The equivalence of (2), (3) and (4) is Corollary 8.20 and Proposition 8.22 in the Horofunctions and Busemann functions section of \cite{BH}.
\end{proof}

\begin{remark}\label{constant}
One consequence of the previous proposition is that any horofunction $h$, up to translation by a constant, is of the form $h=b_c$ where $c$ is a geodesic ray. By Corollary 8.20 in the Horofunctions and Busemann functions section of \cite{BH}, up to changing the base point, such a $c$ is unique. Therefore, any horofunction $h$ defines a unique point $\zeta$ at the CAT(0) boundary $\partial X$ and vice versa. This gives that, up to translation by constants, horofunctions and points in the CAT(0) boundary $\partial X$ are in a 1-1 correspondence. Hence, for any point $\zeta \in \partial X$,  we will denote its corresponding horofunction by $h_{\zeta}+C$ or simply by $h_{\zeta}$ if $h$ is normalized to vanish at a fixed base point $x_0$.

\end{remark}

\begin{corollary}\label{important}
 Let $X$ be a proper complete CAT(0) space and let $x_0 \in X$. If $h$ is a horofunction normalized so that $h(x_0)=0$, then there exists some $ p \in X$ such that for any $x \in X,$ $\exists z_x \in [x,p] $ with $$h(x)=d(x,z_x)-d(z_x,p),$$ furthermore, such $z_x$ is unique. 
\end{corollary}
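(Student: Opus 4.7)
The plan is to take $p := x_0$ itself. The key observation driving everything is that $h$, being a horofunction, is $1$-Lipschitz (Proposition \ref{Lip}), and combined with the normalization $h(x_0) = 0$ this yields the uniform bound $|h(x)| = |h(x) - h(x_0)| \le d(x, x_0) = d(x, p)$ for every $x \in X$. This bound is exactly what is needed to locate a $z_x$ on the segment $[x, p]$.

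For each $x \in X$, I would consider the unique CAT(0) geodesic from $x$ to $p$, parametrized by arc length as $\gamma \colon [0, L] \to X$ with $\gamma(0) = x$, $\gamma(L) = p$, and $L = d(x, p)$. For a point $z = \gamma(s)$ on this geodesic one has $d(x,z) = s$ and $d(z, p) = L - s$, so the auxiliary function $f(s) := d(x, \gamma(s)) - d(\gamma(s), p) = 2s - L$ is continuous, strictly increasing, and maps $[0, L]$ onto $[-L, L]$. Since $|h(x)| \le L$ by the previous paragraph, the intermediate value theorem delivers a unique $s^{\star} \in [0, L]$ with $f(s^{\star}) = h(x)$, and I would set $z_x := \gamma(s^{\star})$. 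By construction, $h(x) = d(x, z_x) - d(z_x, p)$.

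Uniqueness of $z_x$ as a point of $[x, p]$ follows at once: since $[x, p]$ is a unique minimizing geodesic in the CAT(0) sense, any point on it is determined by its arc-length parameter, and the strict monotonicity of $f$ forces a single $s^{\star}$. I do not anticipate any serious technical obstacle here; the entire content of the corollary reduces to identifying an admissible $p$ (indeed, plugging $x = p$ into the claimed identity forces $h(p) = 0$, so the only freedom is to pick $p$ in the level set $h^{-1}(0)$, and the basepoint $x_0$ is a canonical such choice) followed by a routine intermediate-value argument along a single geodesic segment. The properness and completeness hypotheses on $X$ are only used implicitly, in order to have unique CAT(0) geodesics joining $x$ and $p$.
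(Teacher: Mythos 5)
Your proof is correct, but it takes a genuinely different and more elementary route than the paper. You reduce the whole statement to the intermediate value theorem along the single segment $[x,p]$ with $p=x_0$: since $h$ is $1$-Lipschitz (Proposition \ref{Lip}) and $h(x_0)=0$, the value $h(x)$ lies in $[-d(x,p),\,d(x,p)]$, which is exactly the range of the strictly increasing function $s\mapsto d(x,\gamma(s))-d(\gamma(s),p)=2s-L$, so existence and uniqueness of $z_x$ are immediate. This also exposes that the literal statement never uses that $h$ is a horofunction --- any $1$-Lipschitz function vanishing at $p$ would do. The paper argues differently: it invokes Proposition \ref{all equivalent} to write $h=b_c$ for a geodesic ray $c$, sets $p=c(0)$, and produces $z_x$ as the limit (extracted via properness) of the internal tripod points $i_{c(n)}\in[x,p]$ of the triangles with vertices $x,p,c(n)$ from Remark \ref{tripod}. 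What that heavier construction buys is the geometric identification of $z_x$ as the ``internal point at infinity'' $i_\infty$ of the ideal triangle spanned by $x$, $p$ and $c(\infty)$, which is precisely what the subsequent remark, the definition of a triangle based on $x,p,c$, and the conjecture about $(i_\infty)_x$ lying near $im(c)$ rely on. Since both arguments prove uniqueness of the point of $[x,p]$ satisfying the identity, your $z_x$ necessarily coincides with the paper's limit of internal points (for the common choice of basepoint), so your shortcut is a valid proof of the corollary as stated; it just would not, on its own, establish the convergence of internal points that the paper uses afterwards.
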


\begin{proof}
By the previous theorem, up to translation by a constant, there must exist a geodesic ray $c$ such that $h=b_c,$ set $p:=c(0).$ Fix some $x \in X$ and notice that since $$h(x)=\lim_{t\to\infty}(d(x,c(t))-d(c(t),p)),$$ the sequence $d(x,c(n))-d(c(n),p)\rightarrow h(x).$ By Remark 4.3, for each $n$, if we consider the triangle with vertices $x,c(n),p$, the internal point $i_{c(n)}$ must satisfy that $d(x,i_{c(n)})-d(i_{c(n)},p)=d(x,c(n))-d(c(n),p)\rightarrow h(x)$. Now notice that the sequence $i_{c(n)} \in B_r(p)$ where $r=d(p,x)$. Since $X$ is proper, $i_{c(n)}$ must have a convergent subsequence $i_{c(n_k)} \rightarrow z$. Since $d(x,-)$ and $d(-,p)$ are continuous, we must have $d(x,i_{c(n_k)}) \rightarrow d(x,z)$ and $d(i_{c(n_k)},p) \rightarrow d(z,p)$. This implies that $d(x,i_{c(n_k)})-d(i_{c(n_k)},p) \rightarrow d(x,z)-d(z,p)$. But since we also have $d(x,i_{c(n_k)})-d(i_{c(n_k)},p) \rightarrow h(x)$, we must have $h(x)=d(x,z)-d(z,p)$, setting $z_x=z$ establishes the stated result. Uniqueness is clear.
\end{proof}
\begin{remark}
Let $X$ be a complete CAT(0) space and let $c$ be a geodesic ray starting at $p$ and fix some $x \in X.$ For each integer $t \in [0, \infty)$, we get a triangle whose vertices are $x,p,c(t)$, and by Remark \ref{tripod}   , we get three internal points $(i_x)_t, (i_p)_t, i_{c(t)}.$ By an argument similar to the one in the previous corollary, we can show that the sequences $(i_x)_t, (i_p)_t, i_{c(t)}$ converge to three points $i_x, i_p, i_\infty$ with the property that $d(x,i_\infty)=d(x, i_p)$, $d(p,i_x)=d(p,i_\infty)$ and $b_{c}(x)=d(x,i_\infty)-d(i_\infty,p)$. Now, Proposition 8.2 of the CAT(0) boundary section in \cite{BH} shows that the sequence of geodesics $[x,c(t)]$ converges to a geodesic ray $c'$ asymptotic to $c$ where $c'$ is the unique such geodesic ray. This motivates the following definition.
\end{remark}

\begin{definition}
Let $X$ be a proper complete CAT(0) space and let $c$ be a geodesic ray in $X$ starting at $p$. Let $x \in X$, and let $c'$ be the unique geodesic ray starting at $x$ and asymptotic to $c$. \emph{The triangle based on $x,p,c$} is defined to be $im(c) \cup im(c') \cup [x,p]$
\end{definition}

The previous remark shows that for any triangle based at infinity, we get three distinguished points, $i_x, i_p, i_\infty$. with $d(x,i_\infty)=d(x, i_p)$, $d(p,i_x)=d(p,i_\infty)$ and $b_c(x)=d(x,i_\infty)-d(i_\infty,p)$. Since $i_{\infty}$ depends on $x$, we will denote it by $(i_ {\infty})_{x}$.

\begin{conjecture}

Let  $c$ be a geodesic ray starting at $p$ in a CAT(0) space $X$. Then corresponding horofunction $h=b_c$ is Morse if and only if there exists a $\delta$ such that for all $x \in X$, we have ($i_ {\infty})_{x}$ in the $\delta$-neighborhood of $im(c)$.
\end{conjecture}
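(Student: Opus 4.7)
For the forward direction, the plan is to reduce to Cordes' thin-triangle theorem on finite approximations and then pass to a limit. Fix $x \in X$ and, for each $t > 0$, consider the geodesic triangle $T_t$ with vertices $x, p, c(t)$. Since the side $[p, c(t)]$ is a subsegment of the $N$-Morse ray $c$, Lemma 2.2 of \cite{Cordes2017} ensures $T_t$ is $\delta_0$-thin with $\delta_0 = 4 N(3,0)$, uniformly in $t$. Applying slim condition 2 from Lemma \ref{equivalence} to the two subgeodesics issuing from $p$---namely $[p, i_{c(t)}] \subset [x,p]$ and $[p, (i_x)_t] \subset [p, c(t)]$, which by Remark \ref{tripod} have the common length $d(p, i_{c(t)}) = d(p, (i_x)_t)$---yields some $v \in [p, (i_x)_t]$ with $d(i_{c(t)}, v) \leq \delta_0$. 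Since $v \in \text{im}(c)$ and $\text{im}(c)$ is closed in the proper space $X$, letting $t \to \infty$ gives $d((i_\infty)_x, \text{im}(c)) \leq \delta_0$, independently of $x$.

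For the backward direction, the strategy is to upgrade the pointwise closeness $(i_\infty)_x \in \mathcal{N}_\delta(\text{im}(c))$ into \emph{uniform thinness} of the asymptotic triangle $\text{im}(c) \cup \text{im}(c') \cup [x, p]$, where $c'$ is the unique geodesic ray from $x$ asymptotic to $c$, and then derive the Morse property of $c$ from this thinness. The easy half of thinness is the fellow-traveling of the initial subsegment $[p, (i_\infty)_x]$ with an initial subsegment of $c$: if $c(s^*)$ realizes the distance from $(i_\infty)_x$ to $c$, then $|s^* - d(p, (i_\infty)_x)| \leq \delta$ by the $1$-Lipschitz parametrization of $c$, and the CAT($0$) convexity inequality $d(\alpha(u), \beta(u)) \leq (u/T)\,d(\alpha(T), \beta(T))$ for geodesics $\alpha, \beta$ sharing an initial point propagates endpoint proximity back to $p$, placing the whole subsegment $[p, (i_\infty)_x]$ into the $2\delta$-neighborhood of $\text{im}(c)$.

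The main obstacle is the analogous fellow-traveling of the terminal subsegment $[(i_\infty)_x, x]$ with the initial subsegment $[x, i_p]$ of the asymptotic ray $c'$, which reduces to bounding $d((i_\infty)_x, i_p)$ uniformly in $x$. Both points lie at distance $(d(x,p) + b_c(x))/2$ from $x$ and, approximately, on the horosphere $b_c^{-1}(-d(p, (i_\infty)_x))$, but a general CAT($0$) space can admit horospheres containing pairs of points arbitrarily far apart, so some input beyond the bare hypothesis is required. My plan is to combine the CAT($0$) convexity of $d(\,\cdot\,, \text{im}(c))$ with the gradient-ray identity $b_c(i_p) = b_c(x) - d(x, i_p)$ for $c'$, together with a comparison-triangle argument between the configurations $p, (i_\infty)_x, x$ and $p, c(s^*), x$, to squeeze $i_p$ into a bounded neighborhood of $(i_\infty)_x$. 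Once this is achieved, a second application of the CAT($0$) convexity inequality gives the second fellow-traveling; the asymptotic triangle is then uniformly $\delta'$-thin, and inserting this thinness into the standard quasi-geodesic-stability argument (in the style of Lemma \ref{busemann is convex}, applied to a $(\lambda, \epsilon)$-quasi-geodesic with endpoints on $c$) yields a Morse gauge for $c$ depending only on $\delta, \lambda, \epsilon$. Should the direct bound on $d((i_\infty)_x, i_p)$ prove intractable, an alternative is to bypass asymptotic-triangle thinness and show directly that $c$ is contracting---estimating $\pi_c(x)$ via $(i_\infty)_x$ to bound the projection of any ball disjoint from $c$---exploiting that contracting and Morse coincide in proper CAT($0$) spaces.
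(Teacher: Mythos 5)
This statement is posed in the paper as a \emph{conjecture}: the author gives no proof, only a remark asserting that the ``only if'' direction is clear and that the ``if'' direction is believed to admit an easy proof. So there is nothing to compare your argument against; judged on its own terms, your proposal has a gap in each direction. In the forward direction the conclusion and the tripod--limit framework are right, but the step ``$[p,c(t)]$ is a subsegment of the $N$-Morse ray $c$, so Lemma 2.2 of \cite{Cordes2017} makes $T_t$ $4N(3,0)$-thin uniformly'' is a misapplication. That lemma, as it is invoked throughout the paper (in Lemmas \ref{unique point} and \ref{busemann is convex}), requires \emph{two} $N$-Morse sides; a geodesic triangle with a single Morse side need not be uniformly thin in the sense of slim condition 1, since the two sides issuing from the far vertex $x$ can diverge linearly from one another (this already happens in the word metric on $\mathbb{Z}^2 * \mathbb{Z}$ for a triangle with one vertex deep in a flat). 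If you instead promote $[x,c(t)]$ to a Morse geodesic via the paper's corollary of Lemma 2.8 in \cite{Cordes2017}, its gauge depends on $d(p,x)$, which destroys exactly the uniformity in $x$ that the conjecture asserts. The fact you actually need --- that the internal point $i_{c(t)} \in [x,p]$, which sits at parameter $b_t = \tfrac{1}{2}(d(p,x)+t-d(x,c(t)))$, roughly the projection parameter of $x$ onto $c$, lies uniformly close to $im(c)$ --- is true, but the honest route in a CAT(0) space is through the equivalence of Morse and contracting \cite{ChSu2014}: contraction forces every geodesic from $p=c(0)$ to $x$ to pass within a uniform distance of the nearest-point projection $\pi_c(x)$, and $b_t$ converges to that projection parameter up to a uniform error. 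You should make that argument explicit rather than cite a thinness lemma that does not apply in the form you need.

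In the backward direction you do not have a proof. You correctly isolate the crucial step --- a uniform bound on $d((i_\infty)_x, i_p)$, equivalently uniform thinness of the asymptotic triangle based on $x,p,c$ --- and then explicitly leave it unresolved, offering only a plan and a fallback (``should the direct bound prove intractable\ldots''). The CAT(0) convexity inequality you invoke controls two geodesics only near their common endpoint and gives nothing at the far ends, which is precisely where the difficulty sits; and the final passage from uniformly thin asymptotic triangles to a Morse gauge for $c$ is itself only gestured at (Lemma \ref{busemann is convex} is a convexity statement, not a quasi-geodesic stability argument, so it cannot be applied ``in the style of''). As written, the ``if'' direction of the conjecture remains exactly as open in your proposal as it is in the paper.
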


\begin{remark}
The only if direction of the above conjecture is clear since a Morse geodesic in a CAT(0) space is $\delta$-thin, and I believe there should be an easy proof for the if direction as well.
\end{remark}

\begin{definition}
Let $\zeta$ be a point in the CAT(0) boundary $\partial X$ and let $h$ be one of its horofunctions. For each $r \in \mathbb{R},$ we define  the \emph{$r$-horosphere of $h$} by $H_{r}:=h^{-1}(-r)$. A horosphere of $h$ is just an $r$-horosphere of $h$ for some $r$. A horosphere of $\zeta$ is a horosphere of some $\zeta$-horofunction $h$.
\end{definition}

Notice that for any $C \in \mathbb{R}$ both $h$ and $h+C$ have the same set of horospheres.

\begin{definition}

Let $\zeta$ be a point in the CAT(0) boundary $\partial X$ and let $h$ be one of its horofunction. Let $\{H_n\}_{n \in \mathbb{N}}$ be a sequence of horospheres for $h$. We say that the sequence $\{H_n\}_{n \in \mathbb{N}}$ converges to $\zeta$ if for any sequence $\{x_n\}_{n \in \mathbb{N}}$ with $x_n \in H_n$, such that $x_n \rightarrow \eta \in \partial X$, we must have $\eta=\zeta$. 
\end{definition}

\begin{definition}
Let $\zeta$ be a point in the CAT(0) boundary $\partial X$ and let $h$ be one of its horofunction. For each $r \in \mathbb{R},$ we define  the \emph{$r$-horoball of $h$} by $B_{r}:=h^{-1}((-\infty,-r])$. A horoball of $h$ is an $r$-horoball for some $r$. A horoball of $\zeta$ is a horoball of some $\zeta$-horofunction $h$.
\end{definition}

\begin{remark}
Notice that convergence of horospheres is a hyperbolicity phenomena. For example, in $\mathbb{R}^2$ if one takes the geodesic ray $c$ to be the positive $y$-axis, then the associated horospheres, which are all  hyperplanes perpendicular to $c$, do not converge since we have two different sequences (the one in black and the one in green in Figure 7) living on the same horospheres but yet defining different points in $\partial \mathbb{R}^2$. However, as shown in Figure 8, if we consider horoshperes centered at $\zeta$ in the Poincaré disk model, we can see that any convergent sequence living on those horospheres must converge to $\zeta$.
\end{remark}

\begin{figure}
    \centering

    \label{Horospheres in}

\begin{tikzpicture}[scale=.7]
\draw[thin,->] (-5,0) -- (9,0);
\draw[thin, red, ->] (0,0) -- (0,9);

\draw[thin, blue, ->] (-5,1) -- (9,1);
\draw[thin, blue, ->] (-5,2) -- (9,2);
\draw[thin, blue, ->] (-5,3) -- (9,3);
\draw[thin, blue, ->] (-5,4) -- (9,4);
\draw[thin, blue, ->] (-5,5) -- (9,5);
\draw[thin, blue, ->] (-5,6) -- (9,6);
\draw[thin, blue, ->] (-5,7) -- (9,7);

\draw [fill=green] (0,0) circle (.07cm);
\draw [fill=green] (1,1) circle (.07cm);
\draw [fill=green] (2,2) circle (.07cm);
\draw [fill=green] (3,3) circle (.07cm);
\draw [fill=green] (4,4) circle (.07cm);
\draw [fill=green] (5,5) circle (.07cm);

\draw [fill=black] (-2,0) circle (.07cm);
\draw [fill=black] (-2,1) circle (.07cm);
\draw [fill=black] (-2,2) circle (.07cm);
\draw [fill=black] (-2,3) circle (.07cm);
\draw [fill=black] (-2,4) circle (.07cm);
\draw [fill=black] (-2,5) circle (.07cm);

\end{tikzpicture}

\caption{ Horospheres of $\mathbb{R}^2$ do not converge}

\end{figure}
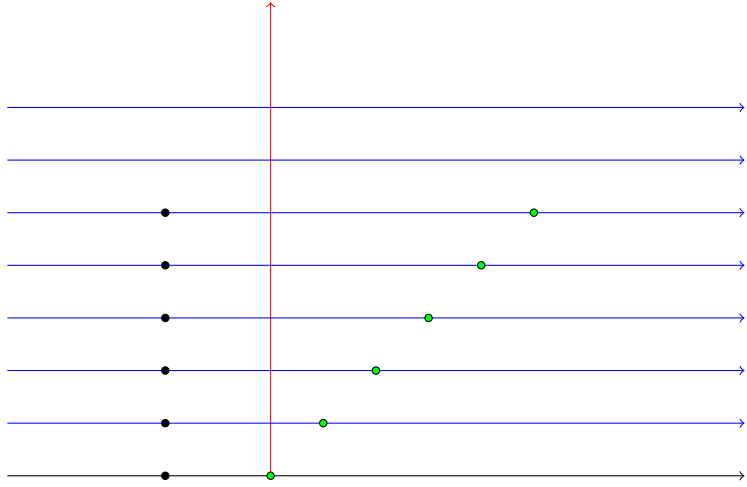

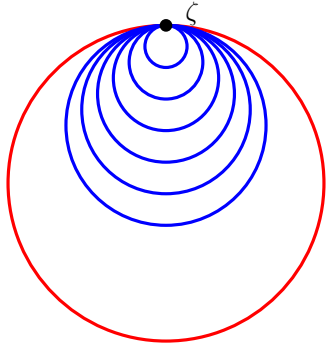
\begin{figure}
    \centering
 
    \label{Horocycl}

\begin{tikzpicture}[scale=.7]

\draw[very thick,red] (0,0) circle (3cm);


\draw[very thick,blue] (0,2.6) circle (.4cm);
\draw[very thick,blue] (0,2.3) circle (.7cm);
\draw[very thick,blue] (0,2) circle (1cm);
\draw[very thick,blue] (0,1.7) circle (1.3cm);
\draw[very thick,blue] (0,1.4) circle (1.6cm);
\draw[very thick,blue] (0,1.1) circle (1.9cm);

\draw[thick,fill=black] (0,3) circle (.1cm);

\node[above] at (.5,3) {$\zeta$};

\end{tikzpicture}

\caption{Horospheres in $\mathbb{H}^2$ are convergent}
\end{figure}

\begin{theorem}[Morse horosphers are convergent]
Let $X$ be a CAT(0) space and let $\zeta$ be a point in the Morse boundary of $X$. If $h_{\zeta}$ is a corresponding horofuntion, then any sequence of $h_{\zeta}$-horospheres must converge to $\zeta$. 
\end{theorem}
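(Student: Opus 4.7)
My plan is to use Corollary \ref{important} to identify, for each $x_n \in H_{r_n}$, a distinguished point $z_n$ on the geodesic $[p,x_n]$ whose distance from $p$ grows like $(d_n+r_n)/2$, and then to use the Morse hypothesis on $c$ (in the form of the ``only if'' direction of the Conjecture stated just before this theorem, which is clear from the $\delta$-thinness of CAT(0) triangles having a Morse side) to show that $z_n$ lies within a uniform distance of the ray $c$. The contradiction will arise because $z_n$ must simultaneously tend to $\zeta$ (because it shadows $c$) and to $\eta$ (because it lies on $[p,x_n]$, whose initial segments approximate $c'$).

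By Proposition \ref{all equivalent}, after normalization $h_\zeta=b_c$ for a geodesic ray $c$ starting at a basepoint $p$ and representing $\zeta$; since $\zeta\in\partial_\star X$, I take $c$ to be $N$-Morse. Suppose for contradiction that $\{H_{r_n}\}$ is a sequence of horospheres with $x_n\in H_{r_n}$ and $x_n\to\eta\in\partial X$, $\eta\neq\zeta$. Write $d_n=d(p,x_n)\to\infty$, let $c'$ be the unique geodesic ray from $p$ to $\eta$, so $[p,x_n]\to c'$ uniformly on compacts, and (by CAT(0) uniqueness of asymptotic rays from a common basepoint) $c'\neq c$. Applying Corollary \ref{important} to $x_n$ gives $z_n\in[p,x_n]$ with $b_c(x_n)=d(x_n,z_n)-d(z_n,p)=-r_n$; combined with $d(x_n,z_n)+d(z_n,p)=d_n$ this pins down $d(p,z_n)=s_n:=(d_n+r_n)/2$, so $z_n=[p,x_n](s_n)$. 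By construction, $z_n$ coincides with $(i_\infty)_{x_n}$ from the triangle-at-infinity construction in the Remark preceding the Conjecture.

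Invoking the ``only if'' direction of the Conjecture yields a uniform $\delta=\delta(N)>0$ with $d(z_n,c)\leq\delta$; since $d(p,z_n)=s_n$, a triangle-inequality argument using $|d(p,z_n)-d(p,\pi_c(z_n))|\leq d(z_n,\pi_c(z_n))\leq\delta$ shows the projection is at parameter within $\delta$ of $s_n$, hence $d(z_n,c(s_n))\leq 2\delta$. So long as $s_n\to\infty$ (automatic whenever $r_n$ is bounded below, the natural regime for horospheres accumulating at $\zeta$), $c(s_n)\to\zeta$ in $\partial X$; meanwhile $[p,z_n]=[p,x_n]|_{[0,s_n]}$ with $[p,x_n]\to c'$ uniformly on compacts and $s_n\to\infty$ gives $z_n\to\eta$ in $\partial X$. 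The standard CAT(0) fact that two sequences in $X$ at uniformly bounded distance that both escape to infinity must share the same visual limit (proved by convexity of $T\mapsto d([p,z_n](T),[p,c(s_n)](T))$) then forces $\eta=\zeta$, a contradiction. The principal obstacle is supplying the uniform bound $(i_\infty)_x\in N_\delta(c)$: while the author asserts this follows from Morse triangles in CAT(0) being $\delta$-thin, carrying it out rigorously requires a careful CAT(0) triangle analysis, e.g.\ applying the Morse slim-triangle estimate to the triangle $x_n,p,c(t)$ after observing, via the Corollary inside the proof of Lemma \ref{busemann is convex}, that the two sides from $x_n$ to $c$ are themselves Morse.
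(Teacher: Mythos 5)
Your route is genuinely different from the paper's. The paper invokes total visibility of the Morse boundary (Charney--Sultan) to join $\eta$ to $\zeta$ by a bi-infinite geodesic, shows via projection onto that line that $h_\zeta+h_\eta$ is bounded below, and gets a contradiction from $h_\zeta(x_n)\to-\infty$ and $h_\eta(x_n)\to-\infty$. You instead track the internal point $z_n=(i_\infty)_{x_n}$ on $[p,x_n]$ and play $z_n\to\zeta$ against $z_n\to\eta$. Your computation $d(p,z_n)=(d_n+r_n)/2$ is correct, the identification of $z_n$ with $(i_\infty)_{x_n}$ is consistent with the Remark preceding the Conjecture, and the closing convexity argument is sound. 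Your caveat that one needs $s_n=(d_n+r_n)/2\to\infty$ is also a real point: the theorem as literally stated places no restriction on the $r_n$, and if $r_n\to-\infty$ comparably to $d_n$ the statement fails even in $\mathbb{H}^2$ (take $x_n$ along the ray opposite to $c$); the paper's own proof tacitly assumes $h_\zeta(x_n)\to-\infty$, so this is a defect of the statement rather than of either argument, but your version at least makes the needed hypothesis explicit.

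The genuine gap is exactly where you locate it, and your proposed repair does not close it. You need a $\delta$ depending only on the Morse gauge $N$ of $c$ such that $(i_\infty)_x\in N_\delta(im(c))$ for \emph{all} $x\in X$. In the paper this is precisely the ``only if'' direction of an unproven Conjecture, so it cannot be cited as established. Your sketch of a proof --- make the triangle $x_n,p,c(t)$ thin by observing, via the Corollary preceding the proof of Lemma \ref{busemann is convex}, that the sides from $x_n$ to $c$ are Morse --- produces a Morse gauge $N'$ for $[x_n,c(t)]$ that depends on $d(p,x_n)$, and $d(p,x_n)\to\infty$ along your sequence, so the thinness constant you extract is not uniform in $n$ and the argument collapses at the one step it cannot afford to. To obtain a uniform $\delta$ one needs the contracting characterization of Morse geodesics in CAT(0) spaces (a triangle with one $D$-contracting side is thin near that side with constant depending only on $D$), which is a genuine external input supplied nowhere in the paper. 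As written, the proposal is therefore incomplete, whereas the paper's projection argument avoids this uniformity issue entirely --- at the cost of relying on total visibility and on its own under-justified claim that $x_n\to\eta$ forces $h_\eta(x_n)\to-\infty$.
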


\begin{proof}
The idea of the proof is the following. First, we will show that if $h_{\zeta}$ is a corresponding horofunction for a Morse point, then, for any horofunction $h$ corresponding to a different point $\eta$ (not necessarily a Morse point) the sum $h_{\zeta}+h{\eta}$ must be bounded below. Once that has been established, the result follows easily, as if a sequence $\{x_n\}_{n \in \mathbb{N}}$ converges to $\eta \neq \zeta$, we would have $h_{\eta}(x_n) \rightarrow -\infty$ and since we also have $h_{\zeta}(x_n) \rightarrow -\infty$, we get that $h_{\zeta}+h_{\eta}$ can not bounded below. Now we give the proof.

Let $\zeta$ be a point in the Morse boundary and let $h_{\zeta}$ be one of its horofunctions. We need to show that any convergent sequence $x_n \in \{H_n\}_{n \in \mathbb{N}}$, where $\{H_{n}\}_{n \in \mathbb{N}}$ are $h_{\zeta}$ horospheres, must converge to $\zeta$. Suppose for the sake of contradiction that $x_n \rightarrow \eta$ with $\eta \neq \zeta$. By \cite{ChSu2014}, the Morse boundary of a CAT(0) space is totally visible, there must exist a geodesic line $c: \mathbb{R}\rightarrow X$ with $c(\infty)=\zeta$ and $c(-\infty)=\eta$. Define $c_1(t)=c|_{[0, \infty)}(t)$ and $c_2(t)=c|_{(-\infty,0]}(-t).$ Fix $p=c(0)$. By Remark \ref{constant}, we have $h_{\zeta}=b_{c_{1}}+C_1$ and $h_{\eta}=b_{c_2}+C_2$. For now, lets assume that $C_1=C_2=0.$ The assumption that $x_n \rightarrow \eta$ implies that for any compact set $K \subseteq X$; and any $\epsilon>0,$ there exists an $n$ such that that $x_n \notin K$ and $x_n \in N_{\epsilon}(im(c_2))$. Since $h_{\eta}$ is distance-like, and up to possibly passing to a subsequence, we get that $h_{\eta}(x_n) \rightarrow -\infty$. But by assumption, $x_n \in H_n$ where $H_n$ are horospheres of $h_\zeta$; this gives that $h_{\zeta}(x_n) \rightarrow -\infty$. Therefore, $h_\zeta+h_\eta$ is not bounded below. Now we argue that this can't be the case. It is an easy exercise to show that for any geodesic line $c:\mathbb{R} \rightarrow X$ and any $x \in X$, we have $b_c(p(x)) \leq b_c(x)$ where $p(x)$ is the projection of $x$ on the closed convex subspace $c(\mathbb{R}).$ Now, notice that for all $t \in \mathbb{R}$, we have $h_{\zeta}(c(t))=-t$ and $h_{\eta}(c(t))=t$. Thus, $h_{\zeta}(c(t))+h_{\eta}(c(t))=0$ for all $t \in \mathbb{R}.$ As noted above, for all $x \in X$, we have $h_{\zeta}(p(x)) \leq h_{\zeta}(x)$ and $h_{\eta}(p(x)) \leq h_{\eta}(x)$. Therefore, we get that $h_{\zeta}(x)+h_{\eta}(x) \geq h_{\zeta}(p(x))+h_{\eta}(p(x))=0$ for all $x \in X$. This show that if $\zeta$ is a Morse point and $\eta$ is any other point in the CAT(0) boundary, then the sum of their corresponding horofunctions, $h_{\zeta}+h_{\eta}$ must be bounded below by zero which is a contradiction. Now if $C_1$ and $C_2$ are not zero, we still get that $h_{\zeta}+h_{\eta}$ is bounded below but now by $-C_1-C_2,$ which is also a contradiction.
\end{proof}

{\bf False-Converse:} \emph{We remark that convergence of horospheres does not characterize horofunctions whose gradient rays are Morse due to the following example.}

\begin{figure}
    \centering
 
    \label{fig:my_label}
\begin{tikzpicture}
\draw[thick] (0,0) ellipse (2cm and 1cm);
\draw[thick] (-.6,0) arc (-180:0:.6cm and .2cm);
\draw[thick] (-.5,-.1) arc (180:0:.5cm and .1cm);
\draw[thick] (2,0) arc (-180:180:.8cm);

\draw[thick,fill=black] (2,0) circle (.04cm);
\node[below] at (0,-1.1) {$T^{2}$};
\node[below] at (2,-1) {$\vee$};
\node[below] at (3,-1) {$S^{1}$};
\end{tikzpicture}

\end{figure}

\begin{example}
Consider the topological space $X$ given by a Torus wedge a circle, $X=T^{2} \vee S^{1}$. The fundemental group of $X$ is given by $G= \langle a,\,b,\,c|\,[a,b] \rangle$, where $a$ and $b$ correspond to the torus's meridian and longitude where $c$ corresponds to the circle. Each edge of the $1$-skeleton of the universal cover inherits a labelling by $a,\, b$ or $c.$ Consider the geodeisc ray $\alpha$ given by $\alpha=aca^2ca^3ca^4c...$ and let $\zeta$ be the point in the CAT(0) boundary defined by $\alpha$. Now, taking $h=b_{\alpha},$ the Busemann function of $\alpha$, we can see that any sequence of $h$-horospheres $\{H_n\}_{n\in \mathbb{N}}$ converges to $\zeta$. However, the point $\zeta$ is not Morse.
\end{example}

\begin{conjecture}
 The CAT(0) assumption in the previous theorem can be dropped.
 
\end{conjecture}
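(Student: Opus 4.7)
The plan is to adapt the CAT(0) proof verbatim by replacing each CAT(0)-specific input with its Morse-boundary analog. In the general setting, take $h_{\zeta}$ to be any element of $H_{\star}$ whose Morse gradient rays converge to $\zeta$ (for instance, the Busemann function $b_{c}$ of a Morse representative $c$ of $\zeta$, which lies in $H^{N}_{p}$ by Lemma \ref{busemann is convex}), and declare the horospheres of $h_{\zeta}$ to be its level sets. As in the CAT(0) proof, the entire theorem reduces to the single coarse inequality: \emph{if $\eta \in \partial_{\star}X$ with $\eta \neq \zeta$ and $b_{\eta}$ is a Busemann function representing $\eta$, then $h_{\zeta}+b_{\eta}$ is bounded below.} Given this, if $x_{n} \in H_{n}$ converged in $\partial_{\star}X$ to some $\eta \neq \zeta$, both summands would tend to $-\infty$ along $x_{n}$, contradicting the lower bound.

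The first ingredient is visibility in the Morse boundary: for a Morse point $\zeta$ and any other $\eta \in \partial_{\star}X$ there exists a bi-infinite geodesic $c: \mathbb{R} \to X$ with $c(+\infty)=\zeta$, $c(-\infty)=\eta$, and which is Morse as a line (this is the Morse-boundary analog of total visibility and is available from Cordes/Charney--Cordes--Murray). Writing $c_{1}=c|_{[0,\infty)}$, $c_{2}(t)=c(-t)$ and $p=c(0)$, one has $b_{c_{1}}(c(t))+b_{c_{2}}(c(t))=0$ identically along the line, exactly as in the CAT(0) argument.

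The heart of the adaptation is replacing the CAT(0) nearest-point projection step, which used that $b_{c}(p(x)) \leq b_{c}(x)$. In a general proper geodesic metric space nearest-point projection to $c(\mathbb{R})$ need not exist or be unique, but since $c$ is Morse there is a constant $D$, depending only on the Morse gauge of $c$, such that any geodesic from a point $x \in X$ to $c(t)$ with $|t|$ sufficiently large passes within $D$ of $c(\mathbb{R})$ (this is the standard coarse-projection property of Morse lines, closely related to Lemma \ref{thin polygons}). Picking such a point $y_{x} \in X$ within $D$ of some $c(s) \in c(\mathbb{R})$ and using that Busemann functions are $1$-Lipschitz, one estimates $b_{c_{i}}(x) \geq b_{c_{i}}(y_{x})-D \geq b_{c_{i}}(c(s))-2D$ for $i=1,2$, so $h_{\zeta}(x)+b_{\eta}(x) \geq b_{c_{1}}(c(s))+b_{c_{2}}(c(s))-4D=-4D$, giving the required lower bound (up to the additive constants from choices of representatives, handled as in the CAT(0) proof).

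The main obstacle I foresee is making the coarse projection step truly uniform in $x$ and independent of the side of the line: without CAT(0) convexity, the coarse monotonicity $b_{c_{i}}(p(x)) \leq b_{c_{i}}(x)$ is not literally available, so one has to replace pointwise projections by ``witness points'' $y_{x}$ on long geodesics $[x,c(t)]$ and control the additive slippage via the Morse gauge. A secondary subtlety is that horospheres and horoballs need to be redefined outside CAT(0); the natural choice is to use level/sublevel sets of elements of $H_{\star}$, and one should check that the resulting notion of ``sequence of horospheres converging to $\zeta$'' is independent of the choice of representative Busemann function for $\zeta$, which follows from the Morse-gradient fellow-traveling result (Lemma \ref{unique point}).
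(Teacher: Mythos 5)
This statement is left as a \emph{conjecture} in the paper, with no proof supplied, so there is nothing to compare your argument against; I am evaluating it on its own terms. The step you call the heart of the adaptation contains a genuine error. You pick a witness point $y_x$ on a long geodesic $[x,c(t)]$ lying within $D$ of the line $c(\mathbb{R})$ and assert $b_{c_i}(x)\geq b_{c_i}(y_x)-D$ ``using that Busemann functions are $1$-Lipschitz.'' Lipschitzness only gives $b_{c_i}(x)\geq b_{c_i}(y_x)-d(x,y_x)$, and $d(x,y_x)$ is unbounded as $x$ ranges over $X$ (the witness point on $[x,c(t)]$ closest to the line can be arbitrarily far from $x$), so the claimed bound $h_{\zeta}(x)+b_{\eta}(x)\geq -4D$ does not follow. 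Fortunately the inequality you actually need is true for a far more elementary reason requiring neither CAT(0) nor Morse: writing $c_1(t)=c(t)$ and $c_2(s)=c(-s)$ for the two halves of a geodesic line, the triangle inequality gives $d(x,c(t))+d(x,c(-s))\geq d(c(t),c(-s))=t+s$, hence $[d(x,c(t))-t]+[d(x,c(-s))-s]\geq 0$ for all $t,s$ and so $b_{c_1}(x)+b_{c_2}(x)\geq 0$ everywhere. This replaces the nearest-point-projection step of the CAT(0) proof outright, so you should delete the witness-point machinery rather than repair it.

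The real obstructions to dropping CAT(0) are in the steps you treat as routine. First, the theorem presupposes a bordification in which $x_n\to\eta$ makes sense; by restricting $\eta$ to $\partial_{\star}X$ you prove a strictly weaker statement than the CAT(0) theorem, where $\eta$ ranges over the whole visual boundary and compactness of $X\cup\partial X$ guarantees that unbounded sequences subconverge to \emph{some} $\eta$. In a general proper geodesic space a sequence on horospheres may simply fail to converge to any Morse point, making your version nearly vacuous in some examples. Second, you still need (a) a bi-infinite geodesic joining $\eta$ to $\zeta$, which is available when both are Morse but is exactly the restriction just mentioned, and (b) the implication that $x_n\to\eta$ forces $b_{\eta}(x_n)\to-\infty$; in the paper this is extracted from the cone topology (the $x_n$ eventually lie in $N_{\epsilon}(im(c_2))$ off compact sets), and you must reprove it for whatever topology on $X\cup\partial_{\star}X$ you adopt. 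Finally, replacing ``the'' horofunction of $\zeta$ by the Busemann function of a chosen Morse representative is acceptable, since asymptotic rays (in the sense $d(\alpha(t),\beta(t))\leq C$) have Busemann functions differing by a bounded, though not necessarily constant, function, and boundedness is all the argument uses. With these repairs the strategy is plausible for Morse $\eta$, but as written the proposal does not establish the conjecture.
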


\begin{lemma}\label{unbounded}
Let $\zeta$ be a point in the Morse boundary of a CAT(0) space $X$ and let $h_{\zeta}$ be one of it's horofunctions. If $c$ is a geodesic ray with $c(\infty) \neq \zeta$\, then $h_{\zeta}(c(t)) \rightarrow \infty$ as $t \rightarrow \infty.$
\end{lemma}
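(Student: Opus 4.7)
The plan is to exploit the boundedness-below result that was established mid-proof of the previous theorem (Morse horospheres are convergent): for $\zeta$ a Morse boundary point and any $\eta \in \partial X$ distinct from $\zeta$, if $h_\zeta$ and $h_\eta$ are corresponding horofunctions, then $h_\zeta + h_\eta$ is bounded below on $X$ (by $-C_1-C_2$ where $C_1, C_2$ are the normalization constants of the two horofunctions). This is really the same content as the convergence-of-horospheres theorem; the present lemma is just a convenient reformulation obtained by testing the inequality along a specific ray.

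The execution is short. First I would set $\eta := c(\infty) \in \partial X$, which is well defined since $c$ is a geodesic ray in a CAT(0) space; by hypothesis $\eta \neq \zeta$. By Remark \ref{constant}, any horofunction for $\eta$ differs from $b_c$ by a constant, so write $h_\eta = b_c + C_2$ for the normalized horofunction $h_\eta$ vanishing at the chosen basepoint $p$, and $h_\zeta = b_{c'} + C_1$ for the analogous normalized $\zeta$-horofunction. Since $b_c(c(t)) = -t$, we obtain $h_\eta(c(t)) = -t + C_2$, which tends to $-\infty$.

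Next I would invoke the bounded-below estimate: by the argument in the proof of the previous theorem (using total visibility of the Morse boundary of a CAT(0) space to build a biinfinite geodesic through $\zeta$ and $\eta$, and then applying the projection-decreasing property of Busemann functions onto that geodesic line), we have
\[
h_\zeta(x) + h_\eta(x) \geq -C_1 - C_2 \quad \text{for all } x \in X.
\]
Substituting $x = c(t)$ and rearranging gives
\[
h_\zeta(c(t)) \geq -C_1 - C_2 - h_\eta(c(t)) = -C_1 - 2C_2 + t,
\]
which tends to $+\infty$ as $t \to \infty$, establishing the claim.

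There is no real obstacle to overcome here, since the heavy lifting was already done in the preceding theorem; the only thing to verify is that the bounded-below conclusion of that proof is genuinely valid for an arbitrary $\eta \in \partial X$ (not just a Morse $\eta$). Re-reading that proof, the Morse hypothesis was used exclusively on $\zeta$ to invoke total visibility in \cite{ChSu2014}, while $\eta$ played no role beyond being a distinct CAT(0) boundary point, so the inequality applies in our setting. Everything else reduces to the 1-Lipschitz observation that $b_c(c(t)) = -t$.
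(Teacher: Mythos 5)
Your proof is correct. The paper itself gives no argument here (it declares the proof ``easy and left as an exercise for the reader''), so there is nothing to compare against; your derivation from the bounded-below estimate $h_{\zeta}+h_{\eta}\geq -C_1-C_2$ established inside the proof of the preceding theorem is a legitimate and probably the intended route, and the arithmetic $h_{\zeta}(c(t))\geq t-C_1-2C_2\to\infty$ checks out. The one point you rightly flag -- that the bounded-below inequality must hold for an arbitrary $\eta\in\partial X$, not just a Morse one -- is handled exactly as the paper handles it: the visibility statement is applied with $\zeta$ Morse and $\eta$ arbitrary, and the Morse hypothesis enters only through $\zeta$, so your use is no weaker than the paper's own. (A purely cosmetic remark: the constant $-C_1-C_2$ versus $C_1+C_2$ depends on the sign convention in $h=b+C$; either way it is a fixed constant and the divergence is unaffected.)
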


\begin{proof}
The proof is easy and left as an exercise for the reader.
\end{proof}

\begin{lemma}
Any horoball is convex.
\end{lemma}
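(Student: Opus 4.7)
The plan is to derive the convexity of any horoball directly from the definition of a horofunction as a \emph{convex} distance-like function, together with the fact that in a \text{CAT}(0) space the standard notion of convexity along geodesics applies.

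First, fix a horofunction $h$ and $r \in \mathbb{R}$, and let $B_r = h^{-1}((-\infty, -r])$. I want to show that for any $x_0, x_1 \in B_r$ and any point $x_t$ on the (unique) geodesic segment $[x_0, x_1]$ with $d(x_0, x_t) = t\, d(x_0, x_1)$, one has $x_t \in B_r$. By definition, $h$ is a convex function on $X$, so evaluating along the geodesic $[x_0, x_1]$ gives
\[
h(x_t) \;\leq\; (1-t)\,h(x_0) \,+\, t\,h(x_1).
\]
Since $x_0, x_1 \in B_r$, both $h(x_0) \leq -r$ and $h(x_1) \leq -r$, so the right-hand side is at most $(1-t)(-r) + t(-r) = -r$. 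Therefore $h(x_t) \leq -r$, i.e.\ $x_t \in B_r$.

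In short, horoballs are sublevel sets of a convex function, and in a \text{CAT}(0) space sublevel sets of convex functions are convex. There is essentially no obstacle: the content is entirely packaged into Proposition \ref{all equivalent}, which identifies horofunctions with Busemann functions, combined with the classical fact that Busemann functions on \text{CAT}(0) spaces are convex along geodesics (see the Horofunctions and Busemann functions section of \cite{BH}). The proof is a one-line convexity computation, and the only care needed is to note that the geodesic segment $[x_0, x_1]$ is unique, so that evaluating ``at $x_t$'' is unambiguous.
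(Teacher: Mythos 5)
Your proof is correct and is exactly the argument the paper intends: a horoball is a sublevel set of the horofunction, which is convex by definition, so the one-line convexity computation along the geodesic $[x_0,x_1]$ gives the result. The only quibble is that the appeal to the identification with Busemann functions is unnecessary --- convexity of $h$ is already built into the paper's definition of a horofunction.
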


\begin{proof}
This follows easily by convexity of the corresponding horofunction.
\end{proof}

\begin{corollary}
 Let $\zeta$ be a point in the Morse boundary of a proper complete CAT(0) space $X$ and let $B_1$ be a horoball of $\zeta$. If $\eta$ is any other point in the CAT(0) boundary of $X$, and $B_2$ is a horoball of $\eta,$ then $B_{1} \cap B_{2}$ must be bounded.
\end{corollary}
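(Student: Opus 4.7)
The plan is to argue by contradiction, combining the preceding theorem on convergence of Morse horospheres with Lemma \ref{unbounded} and the convexity of Busemann functions in CAT(0) spaces. Suppose $B_1 \cap B_2$ is unbounded. Fix a basepoint $p$ lying outside both $B_1$ and $B_2$ (which is possible because each horoball is a proper subset of $X$), so that $h_\zeta(p) \geq -r_1$ and $h_\eta(p) \geq -r_2$. Choose $x_n \in B_1 \cap B_2$ with $t_n := d(p, x_n) \to \infty$. Properness of $X$ together with compactness of $X \cup \partial X$ in the cone topology allows us to pass to a subsequence with $x_n \to \xi \in \partial X$.

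Next I would pin down the direction $\xi$. Each $x_n$ lies on the $h_\zeta$-horosphere $h_\zeta^{-1}(h_\zeta(x_n))$, so by the preceding theorem (convergence of Morse horospheres applied at the Morse point $\zeta$), we must have $\xi = \zeta$. Consider now the geodesic segments $c_n \colon [0,t_n] \to X$ from $p$ to $x_n$. By the standard CAT(0) fact (cf.\ Proposition 8.2 in the CAT(0) boundary section of \cite{BH}), after a further subsequence, $c_n$ converges uniformly on compact sets to a geodesic ray $c$ with $c(0) = p$ and $c(\infty) = \xi = \zeta$. Since $\zeta \neq \eta$, Lemma \ref{unbounded} applied to $h_\eta$ gives $h_\eta(c(T)) \to \infty$ as $T \to \infty$.

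The contradiction comes from convexity. Since Busemann functions on a CAT(0) space are convex, $h_\eta$ is convex along each $c_n$, so for any $T \leq t_n$,
\[
h_\eta(c_n(T)) \leq \left(1 - \tfrac{T}{t_n}\right) h_\eta(p) + \tfrac{T}{t_n} h_\eta(x_n).
\]
Because $x_n \in B_2$ we have $h_\eta(x_n) \leq -r_2 \leq h_\eta(p)$, so the right-hand side is bounded above by $h_\eta(p)$. Letting $n \to \infty$, continuity of $h_\eta$ and uniform convergence of $c_n$ to $c$ on compact sets yield $h_\eta(c(T)) \leq h_\eta(p)$ for every fixed $T \geq 0$, contradicting $h_\eta(c(T)) \to \infty$. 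Hence $B_1 \cap B_2$ must be bounded.

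The main obstacle here is conceptual rather than technical: we must avoid assuming $\eta$ is also Morse (so the horosphere-convergence theorem cannot be applied to $h_\eta$). The trick is to use the Morse theorem only to identify $\xi = \zeta$, and then exploit CAT(0) convexity of $h_\eta$ together with the qualitative growth statement of Lemma \ref{unbounded} to squeeze out the contradiction. Everything else is routine bookkeeping with Arzelà--Ascoli and the cone topology.
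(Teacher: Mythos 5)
Your overall strategy is sound and runs parallel to the paper's: both argue by contradiction, extract a limiting geodesic ray from an unbounded sequence in $B_1\cap B_2$, and derive the contradiction from Lemma \ref{unbounded}. The mechanics differ in an interesting way. The paper takes a basepoint $p\in B_1\cap B_2$, uses convexity of horoballs (the lemma immediately preceding the corollary) to conclude that the segments $[p,x_n]$, and hence the limit ray $c$, lie entirely in $B_1\cap B_2$; then both $h_\zeta$ and $h_\eta$ are bounded above along $c$, and Lemma \ref{unbounded} is invoked twice to force $c(\infty)=\zeta$ and $c(\infty)=\eta$ simultaneously. You instead identify the limit direction $\xi=\zeta$ directly from the horosphere-convergence theorem, and use convexity of the function $h_\eta$ along the segments $[p,x_n]$ (rather than convexity of the horoball) to bound $h_\eta$ along $c$; this buys you a single application of Lemma \ref{unbounded} instead of two and makes the role of the Morse hypothesis on $\zeta$ more visible. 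The two convexity inputs are essentially equivalent here, so neither route is materially more elementary.

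There is, however, one step not covered by the lemma you cite --- and, to be fair, the paper's own proof has the identical issue. Lemma \ref{unbounded} is stated only for horofunctions of \emph{Morse} boundary points: if $\zeta$ is Morse and $c(\infty)\neq\zeta$, then $h_\zeta(c(t))\to\infty$. You apply it to $h_\eta$, where $\eta$ is an arbitrary point of the CAT(0) boundary, along a ray with $c(\infty)=\zeta$. The roles are swapped: the Morse point is the ray's endpoint, not the point whose horofunction is being evaluated, and the lemma's conclusion is genuinely false for non-Morse points in general (in $\mathbb{R}^2$ the Busemann function of the positive $x$-direction is constant along the $y$-axis). The statement you actually need --- if $c(\infty)=\zeta$ is Morse and $\eta\neq\zeta$ then $h_\eta(c(t))\to\infty$ --- is true, but it should be deduced from the inequality $h_\zeta+h_\eta\geq -C$ established inside the proof of the horosphere-convergence theorem together with $h_\zeta(c(t))\to-\infty$ along a ray asymptotic to $\zeta$; it does not follow from Lemma \ref{unbounded} as stated. (A much smaller point: a basepoint outside both horoballs need not exist, since two horoballs can cover $X$; but your convexity estimate only needs $h_\eta(c_n(T))\leq\max\{h_\eta(p),-r_2\}$, so any basepoint works.)
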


\begin{proof}
We remark that this proof is similar to the one in Proposition 9.35 of \cite{BH}, the key is that the Morse boundary of a CAT(0) space is totally visible. Notice that since $X$ is proper $X \cup \partial X$ must be compact. Since $B_1$ and $B_2$ are horoballs for $\zeta$ and $\eta$ respectively, then by definition, we get two horofunctions $h_1$ and $h_2$ such that $B_1$ is a horoball of $h_1$ and $B_2$ is a horoball of $h_2$. If  $B_1\cap B_2$ is unbounded we get an unbounded sequence $x_n \in B_1 \cap B_2$. But as $X \cup \partial X$ is compact, up to passing to a subsequence, we may assume that $x_n \rightarrow \gamma \in \partial X.$ Notice that by the previous Lemma, $B_1$ and $B_2$ are both convex and therefore $B_1 \cap B_2$ is convex. Hence, if $p \in B_1 \cap B_2$, then $[p,x_n] \subseteq B_1 \cap B_2$ for all $n.$ Thus $[p, x_n]$ converges to a geodesic ray $c \in B_1 \cap  B_2$. Since $c \in B_1,$ $h_1(c(t))$ must be bounded above. Similarly, $h_2(c(t))$ must also be bounded above. But Lemma \ref{unbounded} implies that $c(\infty)=\zeta=\eta$ which is a contradiction.
\end{proof}

\bibliography{ready}{}
\bibliographystyle{plain}

\end{document}